\theoremstyle{plain}
\newtheorem{The}{Theorem}[section]    
\newtheorem{Lem}[The]{Lemma}
\newtheorem{Rem}[The]{Remark}
\theoremstyle{definition}
\newtheorem{Def}{Definition}
\numberwithin{equation}{section} 
\newcommand{\Rmnum}[1]{\expandafter\@slowromancap\romannumeral #1@}
\title{Star Structure Connectivity of Folded hypercubes and Augmented cubes}
\author{Lina Ba, Hailun Wu and Heping Zhang\thanks{Corresponding author.} }
\date{{\small School of Mathematics and Statistics, Lanzhou University,
 Lanzhou, Gansu 730000, P.R. China}\\
{\small E-mails:\ baln19@lzu.edu.cn, zhanghp@lzu.edu.cn}}
\begin{document}

\maketitle
\begin{abstract}
 The connectivity is an important parameter to evaluate the robustness of a network. As a generalization, structure connectivity and substructure connectivity of graphs were proposed. For connected graphs $G$ and $H$, the $H$-structure connectivity $\kappa(G; H)$ (resp. $H$-substructure connectivity $\kappa^{s}(G; H)$) of $G$ is the minimum cardinality of a set of subgraphs $F$  of $G$ that each  is isomorphic to $H$ (resp.  to a connected subgraph of $H$) so that  $G-F$ is disconnected or the singleton. As popular variants of hypercubes, the $n$-dimensional folded hypercubes $FQ_{n}$ and augmented cubes $AQ_{n}$ are attractive interconnected network prototypes for multiple processor systems. In this paper, we obtain that
$\kappa(FQ_{n};K_{1,m})=\kappa^{s}(FQ_{n};K_{1,m})=\lceil\frac{n+1}{2}\rceil$ for $2\leqslant m\leqslant n-1$, $n\geqslant 7$, and $\kappa(AQ_{n};K_{1,m})=\kappa^{s}(AQ_{n};K_{1,m})=\lceil\frac{n-1}{2}\rceil$ for $4\leqslant m\leqslant \frac{3n-15}{4}$.
  \vskip 0.1in

    \noindent {\bf Keywords:} \ Interconnection network;  Structure connectivity; Star; Folded hypercube;  Augmented cube.

    \vskip 0.1 in


    \medskip
\end{abstract}
\section{Introduction}
As we know that an interconnection network is usually represented by an undirected simple graph $G=(V(G), E(G))$, where $V(G)$ and $E(G)$ are the vertex set and the edge set, respectively, of $G$. Each vertex  represents a processor or node and every edge a communication link. Throughout this paper, graphs are finite, simple, and undirected. Two vertices $u$ and $v$ of $G$ are adjacent if they are the end-vertices of an edge. A neighbor of a vertex $x$ of $G$ means a vertex of $G$ adjacent to $x$. The neighborhood of a vertex $x$ in $G$ is the set of neighbors of $x$, denoted by $N_{G}(x)=\{y|xy\in E(G), y\in V(G)\}$. The neighborhood of a vertex subset $A$ in $G$ is denoted by $N_{G}(A)=\cup_{x\in A}N_{G}(x)-A$.
A graph $H$ is a \emph{subgraph} of $G$ if $V (H)\subseteq V(G)$ and $E(H) \subseteq E(G)$. A \emph{spanning subgraph} of $G$ is a subgraph with vertex set $V(G)$.

One of the most basic measure of an interconnection network's fault-tolerant capability is the connectivity. The \emph{connectivity} $\kappa(G)$ of $G$ is defined as the minimum cardinality of a vertex set $S$ such that $G-S$ is disconnected or has only one vertex. In order to provide more accurate measures for fault tolerance of large-scale parallel processing systems, Harary \cite{24} considered \emph{$\rho$-conditional connectivity} $\kappa(G;\rho)$. It is defined as the minimum cardinality of a vertex set whose deletion can disconnect $G$ but every remaining component would still possess the property $\rho$. Based on the $\rho$-conditional connectivity, Fabrega et al. \cite {3} proposed $g$-extra connectivity.  For a connected non-complete graph $G$ and a non-negative integer $g$, a vertex cut $S$ of $G$ is an $g$-extra cut if $G-S$ is disconnected and every component of $G-S$ has more than $g$ vertices. The \emph{$g$-extra connectivity } $\kappa_{g}(G)$ of $G$ is defined as the minimum cardinality of $g$-extra cut of $G$. Further, a number of extensions were introduced including  restricted connectivity, component connectivity and strongly Menger connectivity. However, in consideration of network reliability and fault-tolerance, the neighbors of a faulty node might be more vulnerable. Lin et al. \cite {4} introduced the structure connectivity and substructure connectivity.
A set $F$ of subgraphs of $G$ is a subgraph cut of $G$ if $G-V (F)$ is disconnected or a trivial graph, where $V (F)$ denotes the set of vertices contained in any element of $F$. Let $H$ be any connected subgraph of $G$. Then $F$ is an $H$-structure cut if $F$ is a subgraph cut of $G$ and every subgraph in $F$ is isomorphic to $H$. The \emph{$H$-structure connectivity} of $G$, denoted by $\kappa(G; H)$, is the minimum cardinality of all $H$-structure cuts of $G$.
Furthermore, $F$ is an $H$-substructure cut if $F$ is a subgraph cut of $G$ such that each element of $F$ is isomorphic to a connected subgraph of $H$. The \emph{$H$-substructure connectivity} of $G$, denoted by $\kappa^s(G; H)$, is the minimum cardinality of all $H$-substructure cuts of $G$. By definition, $\kappa^{s}(G,H)\leqslant \kappa(G,H)$. Note that $K_{1}$-structure connectivity  reduces to the classical vertex connectivity.

In recent years,  much of the work has been focused on structure connectivity of hypercube and its variants (see, for example, \cite{6, 7, 10, 12, 9, 26}). The $n$-dimensional folded hypercube $FQ_n$ is a highly symmetric graph as a underlying topology of several parallel systems, such as ATM Switches \cite {8}, PM21 networks \cite {5} and 3D-FolH-NOC network \cite {2}. In \cite{19}, we showed that $\kappa(FQ_{n};K_{1,m})=\kappa^{s}(FQ_{n};K_{1,m})= \lceil\frac{n+1}{2}\rceil$ and $\kappa(Q_{n};K_{1,m})=\kappa^{s}(Q_{n};K_{1,m})= \lceil\frac{n}{2}\rceil$ for all integers $n$ larger than  $m$ in square scale, where $Q_n$ is $n$-dimensional hypercube and $K_{1,m}$ is a star with $m$ leaves $(m\geqslant 1)$. These results just partly solved the $K_{1,m}$-structure(substructure) connectivity of $Q_n$ and $FQ_n$. And Lin et al. \cite {25} completely solved the $K_{1,m}$-structure(substructure) connectivity for $n$-dimensional  crossed cube  $CQ_{n}$ and hypercube $Q_{n}$. The results is that  $\kappa(Q_{n},K_{1,1})=\kappa^{s}(Q_{n},K_{1,1})=n-1$, $\kappa(Q_{n},K_{1,m})=\kappa^{s}(Q_{n},K_{1,m})=\lceil\frac{n}{2}\rceil$ for $2\leqslant m\leqslant n$, $n\geqslant 4$ and  $\kappa(CQ_{n},K_{1,1})=\kappa^{s}(CQ_{n},K_{1,1})=n-1$, $\kappa(CQ_{n},K_{1,m})=\kappa^{s}(CQ_{n},K_{1,m})=\lceil\frac{n}{2}\rceil$ for $2\leqslant m\leqslant n$, $n\geqslant 4$. Further, they  established sharp lower bounds on $K_{1,m}$-structure(substructure) connectivity of the family of hypercube-like networks for $1\leqslant m\leqslant n$.  Since there are numerous variants of hypercube, we can find that there are still some variants of hypercube do not belong to the family of hypercube-like networks, such as folded hypercube and augmented cube.
For $n$-dimensional augmented cube $AQ_n$, Kan et al. \cite{23} showed  $\kappa^s(AQ_n; K_{1,m})=\kappa(AQ_n; K_{1,m})=\lceil\frac{2n-1}{1+m}\rceil$ for $1\leqslant m\leqslant 3$, $n\geqslant 4$ and $\kappa(AQ_n; K_{1,m})=\kappa^s(AQ_n; K_{1,m})=\lceil\frac{n-1}{2}\rceil$ for $4\leqslant m\leqslant 6$, $n\geqslant 6$.
 We know that general $K_{1,m}$-structure connectivity of $FQ_n$ and $AQ_n$ have not been completely solved.  In this paper, we show that $\kappa(FQ_{n};K_{1,m})=\kappa^{s}(FQ_{n};K_{1,m})=\lceil\frac{n+1}{2}\rceil$ for $2\leqslant m\leqslant n-1$, $n\geqslant 7$ and $\kappa(AQ_{n};K_{1,m})=\kappa^{s}(AQ_{n};K_{1,m})=\lceil\frac{n-1}{2}\rceil$ for $4\leqslant m\leqslant \frac{3n-15}{4}$.

The rest parts of this paper are organized as follows. Section 2 gives some definitions and some known results on hypercubes. Sections 3 and 4 present the star-structure connectivity of folded hypercubes and augmented cubes, respectively. Section 5 summarizes our results obtained in this paper and points out some unsolved problems.

\section{Preliminaries}

For integer $n\geqslant 1$, the $n$-dimensional hypercube $Q_n$ is a graph: each vertex $x$ corresponds to an $n$-bit binary string $x_n x_{n-1} \ldots x_{1}$.  For $i\in\{1,2, \ldots, n \}$, $x^i$ denotes the binary string $x_n \ldots \overline{x_i}\ldots x_1$ where $\overline{x_i} = 1-x_i$.  Two vertices $x$ and $y$ in $Q_n$ are adjacent if and only if $y=x^i$ for some $i\in\{1,2, \ldots, n \}$. In this case, we call $x^i$ is the neighbor of $x$ in dimension $i$. Similarly, we set $x^{i,j}$ is the neighbor of $x^i$ in dimension $j$ and $x^{i,j,k}$ is the neighbor of $x^{i,j}$ in dimension $k$.

\begin{Def}\cite{1}
The $n$-dimensional folded hypercube $FQ_n$ is a graph obtained from hypercube $Q_n$ by adding $2^{n-1}$ edges, each of them is  between vertices $x=x_n x_{n-1} \ldots x_1$ and $\overline{x}=\overline{x_n}$ $\overline{x_{n-1}}  \ldots  \overline{x_1}$ where $\overline{x_i} = 1-x_i$.
\end{Def}

For any vertex $x=x_n x_{n-1}\ldots x_1$ in $FQ_n$, we set $(\overline{x})^i$ be the neighbor of $\overline{x}$ in dimension $i$ and $(\overline{x})^{i,j}$ is the neighbor of $(\overline{x})^i$ in dimension $j$.

The $n$-dimensional augmented cube $AQ_n$$(n\geqslant1)$ can be defined recursively as follows.
\begin{Def}\cite{21}
Let $n\geqslant 1$ be an integer. The augmented cube $AQ_n$ of dimension $n$ has $2^n$ vertices, each labeled by an $n$-bit binary string $a_na_{n-1}\ldots a_1$. We define $AQ_1=K_2$. For $n\geqslant 2$, $AQ_n$ is obtained by taking two copies of the augmented cube $AQ_{n-1}$, denoted by $AQ_{n-1}^0$ and $AQ_{n-1}^1$, and adding $2\times 2^{n-1}$ edges between the two as follows:

Let $V(AQ_{n-1}^0)=\{0a_{n-1}a_{n-2}\ldots a_1 | a_i= 0~\rm{or}~1\}$ and $V(AQ_{n-1}^1)=\{1b_{n-1}b_{n-2}\ldots b_1 | b_i= 0~\rm{or}~1\}$. A vertex $u=0a_{n-1}a_{n-2}\ldots a_1$ of $AQ_{n-1}^0$ is adjacent to a vertex $v=1b_{n-1}b_{n-2}\ldots b_1$ of $AQ_{n-1}^1$ if and only if either $a_i=b_i$ for all $1\leqslant i\leqslant n-1$ or $a_i=\overline{b_i}$ for all $1\leqslant i\leqslant n-1$.
\end{Def}

We write this recursive construction of $AQ_n$ symbolically as $AQ_n=AQ_{n-1}^0 \otimes AQ_{n-1}^1$. For an $n$-bit binary string $u=u_nu_{n-1}\ldots u_1$, we use $\overline{u}^i$ to denote the binary string
 $u_n \ldots u_{i+1}\overline{u_i}\ldots \overline{u_1}$ which differs with $u$  from the first to the $i$th bit positions. It is clear that $u^1 = \overline{u}^1$. We call $uu^i$ a hypercube edge and $u\overline{u}^i$ a complement edge in $AQ_n$.

It has been shown that $AQ_n$$ (n\neq3)$ is $(2n-1)$-regular, $(2n-1)$-connected and vertex-symmetric \cite{21}. $FQ_n$ and $AQ_n$ are obtained from hypercube $Q_n$ by adding some specific edges. For example, $Q_3$, $AQ_3$ and $FQ_3$ are illustrated in Figure 1.
\begin{figure}[!htbp]\label{Q3}
\centering
\subfigure[$Q_3$ ]{\includegraphics[totalheight=6cm]{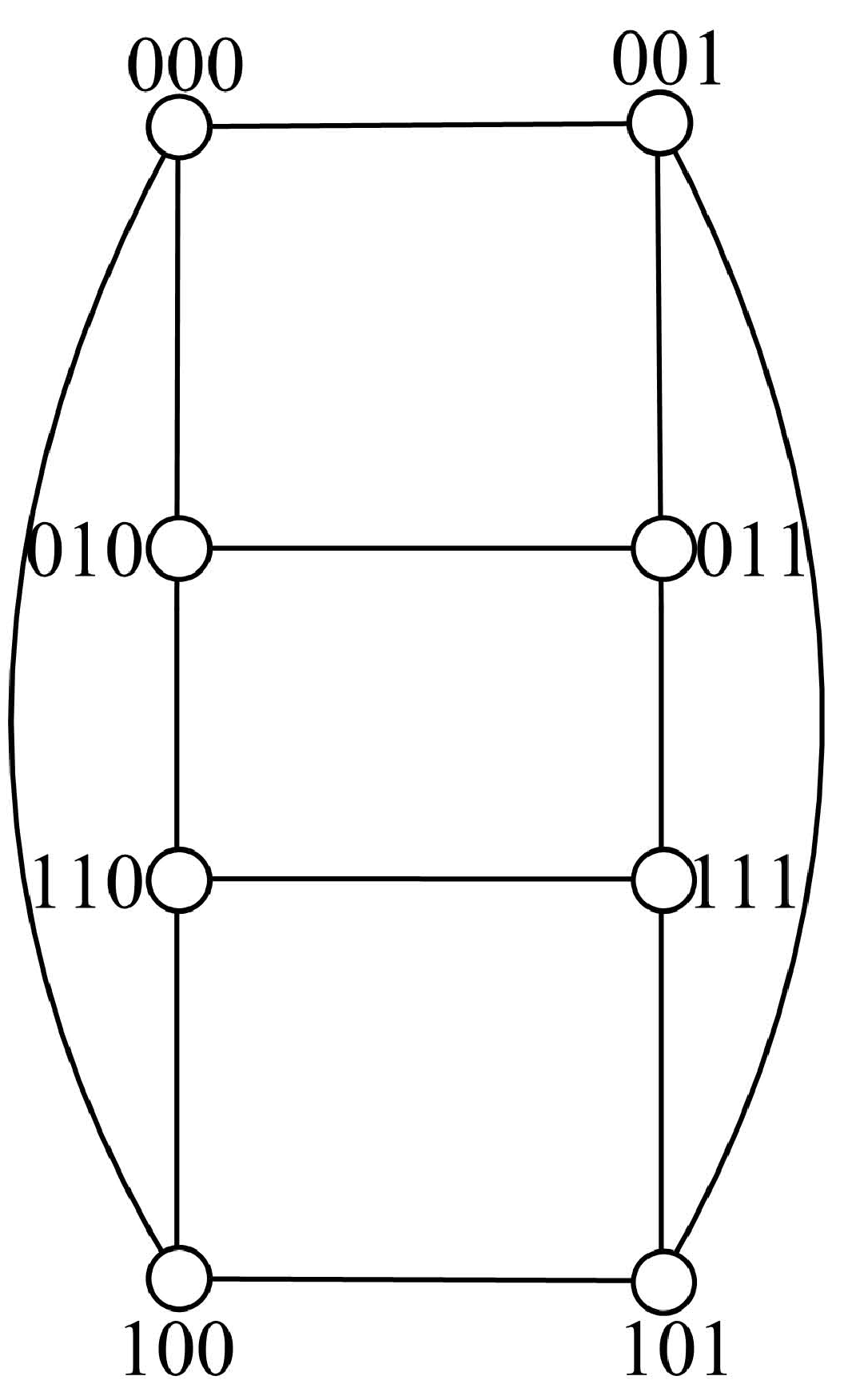}}~~~~~~~~
\subfigure[$FQ_3$ ]{\includegraphics[totalheight=6cm]{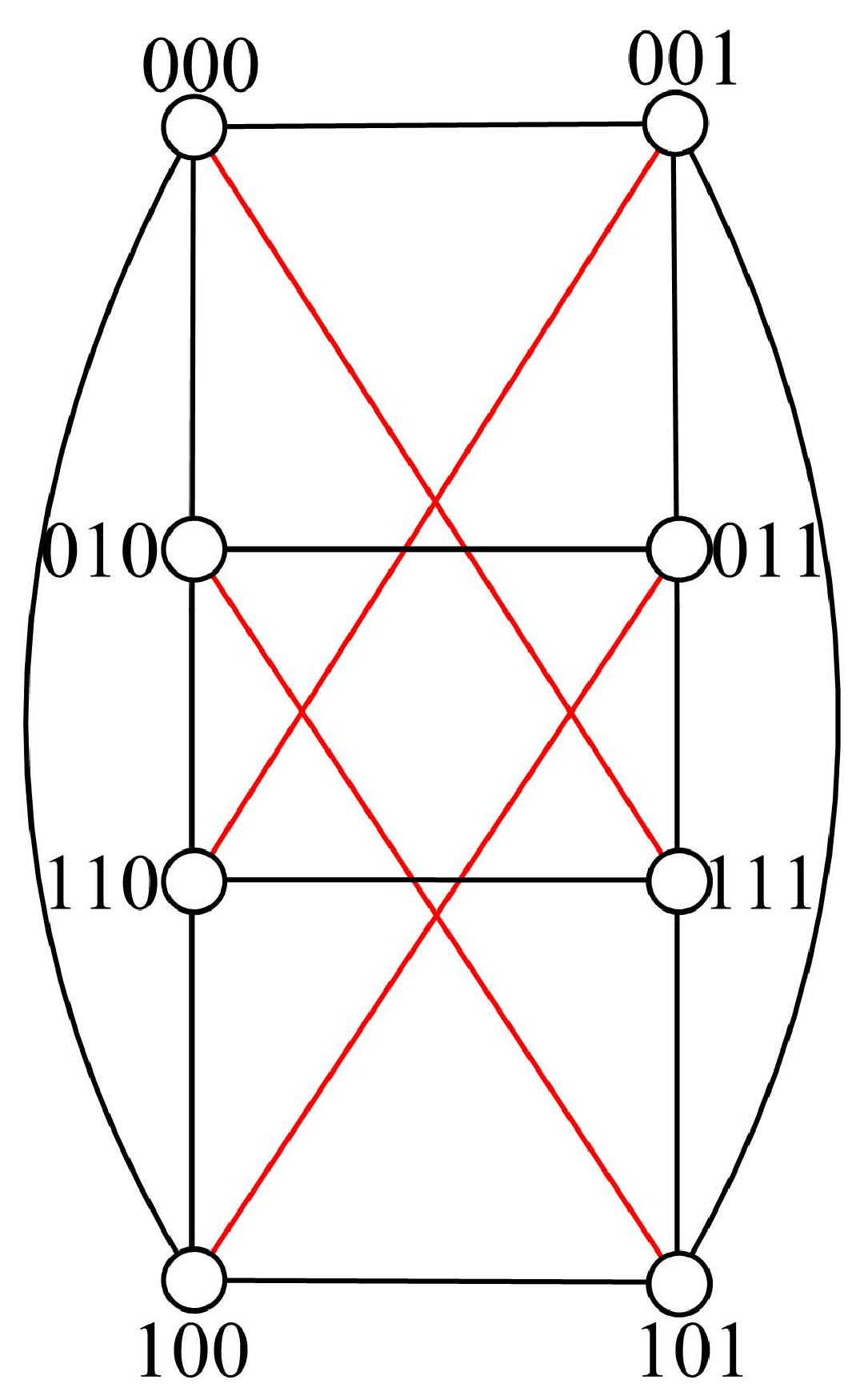}}~~~~~~~~
\subfigure[$AQ_3$]{\includegraphics[totalheight=6cm]{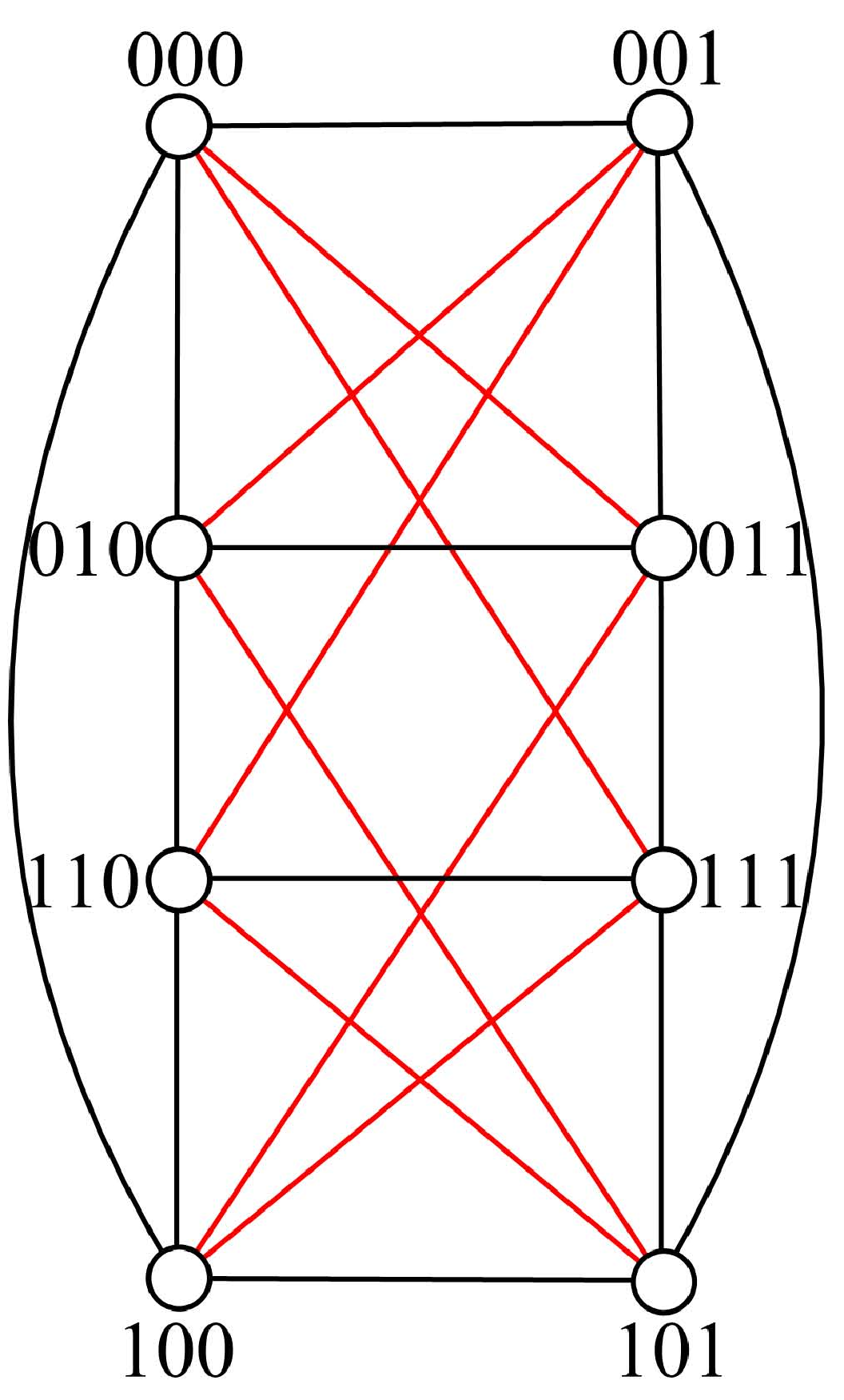}}
\caption{$Q_3$, $FQ_3$ and $AQ_3$.}
\end{figure}

For convenience, we define two functions $f(x)$ and $g(x)$ as follows:
\begin{equation*}\label{}
f(x)=-\frac{x^2}{2}+(n-\frac{1}{2})x+1;~~~~~~~
g(x)=-\frac{x^2}{2}+(2n-\frac{3}{2})x+2-n^2.
\end{equation*}

Now, we introduce the following important results of $n$-dimensional hypercubes $Q_n$.

\begin{Lem}\label{Qn}\emph{\cite {18, 17}}Let $S$ be a vertex subset of $Q_n$ with $n\geqslant 4$.
\begin{enumerate}[(i)]
\item \label{cond 1}
If $|S|<f(k)$ and $1\leqslant k\leqslant n-2$, then $Q_n-S$ contains exactly one large component of order at least $2^n-|S|-(k-1)$ and the small components having at most $k-1$ vertices in total,

\item \label{cond 2}
If $|S|<f(k)$ and $n-1\leqslant k\leqslant n+1$, then $Q_n-S$ contains exactly one large component of order at least $2^n-|S|-(n+1)$ and the small components having at most $n+1$ vertices in total,

\item \label{cond 3}
If $|S|<g(k)$ and $n+2\leqslant k\leqslant 2n-4$, then $Q_n-S$ contains exactly one large component of order at least $2^n-|S|-(k-1)$ and the small components having at most $k-1$ vertices in total.
\end{enumerate}
\end{Lem}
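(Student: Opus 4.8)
The plan is to translate the statement into a vertex-isoperimetric inequality for $Q_n$ and then read off the component structure. List the components of $Q_n-S$ as $C_0,C_1,\ldots$ with $C_0$ of maximum order, and set $A=\bigcup_{i\ge 1}V(C_i)$, the union of all the other (``small'') components. Because the $C_i$ are distinct components of $Q_n-S$, any vertex outside $A$ that is adjacent to $A$ must lie in $S$, so $N_{Q_n}(A)\subseteq S$ and hence $|S|\ge|N_{Q_n}(A)|$. Writing $a=|A|$ we have $|V(C_0)|=2^n-|S|-a$, so the whole lemma reduces to a lower bound on $|N_{Q_n}(A)|$ as a function of $a$: once $|N_{Q_n}(A)|\ge f(a)$ holds throughout the ranges of (i)--(ii) and $|N_{Q_n}(A)|\ge g(a)$ in the range of (iii), the hypothesis $|S|<f(k)$ (resp.\ $<g(k)$) together with monotonicity of $f$ (resp.\ $g$) forces $a\le k-1$ (resp.\ the stated cap), which is precisely the bound on the total order of the small components, and $|V(C_0)|\ge 2^n-|S|-(k-1)$ follows at once.

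First I would identify the extremal sets, which explains the two functions. A radius-$1$ ball --- a vertex $v$ together with $k-1$ of its neighbors --- has precisely $f(k)$ external neighbors:
\begin{equation*}
(n-k+1)+\binom{k-1}{2}+(k-1)(n-k+1)=-\tfrac{k^{2}}{2}+\bigl(n-\tfrac12\bigr)k+1=f(k),
\end{equation*}
and such a ball exists only while $k\le n+1$ (its largest instance is the full closed neighborhood $N[v]$, with $|N(N[v])|=\binom n2=f(n+1)$); this is exactly why (ii) keeps the function $f$ but caps the small part at $n+1$. For $k\ge n+2$ the economical sets are obtained by enlarging a full ball $N[v]$ with vertices at distance $2$ from $v$; for instance adding one such vertex $w$ gives $|N(N[v]\cup\{w\})|=\binom n2+n-3=g(n+2)$, and continuing in this way produces the quadratic $g$ over the range $n+2\le k\le 2n-4$. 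The heart of the proof is therefore the matching lower bound: every $A$ with $|A|=a$ in the stated window satisfies $|N_{Q_n}(A)|\ge f(a)$ (resp.\ $g(a)$).

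To establish this lower bound I would induct on $n$ using the canonical decomposition of $Q_n$ into two copies $Q_{n-1}^0$ and $Q_{n-1}^1$ (split along a chosen dimension) joined by a perfect matching $M$; write $A_i=A\cap V(Q_{n-1}^i)$, $a_i=|A_i|$, and assume $a_0\ge a_1$. Since the internal boundary of $A_0$ inside $Q_{n-1}^0$ is disjoint from the matched vertices $M(v)$ ($v\in A_0$) that fall outside $A_1$, and both lie in $N_{Q_n}(A)$, one obtains the clean recursion
\begin{equation*}
|N_{Q_n}(A)|\ge |N_{Q_{n-1}^0}(A_0)|+(a_0-a_1).
\end{equation*}
Applying the induction hypothesis to $A_0$ and then choosing the splitting dimension to maximise the imbalance $a_0-a_1$ (or averaging the bound over all $n$ dimensions) yields $|N_{Q_n}(A)|\ge f(a)$; the recursion is tight on a ball split along an unused dimension, where $a_1=0$ and (writing $f_n$ for $f$ to display the dimension) $f_{n-1}(a)+a=f_n(a)$ exactly. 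Harper's theorem, whose extremal sets are initial segments of the simplicial order, may be invoked to shortcut the identification of minimisers, but the explicit piecewise bound still needs this quantitative induction. Uniqueness of the large component is then automatic: if two components each exceeded the allowed cap, taking $A$ to be all but the largest would make $a$ large enough for the isoperimetric bound to force $|S|\ge f(k)$ (resp.\ $g(k)$), a contradiction.

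I expect the main obstacle to be the isoperimetric inequality across the crossover $k\approx n$, where the minimiser changes from a radius-$1$ ball to a ball-plus-distance-$2$ structure. The induction must be arranged so that the hypothesis for $Q_{n-1}$ --- whose functions $f,g$ carry the parameter $n-1$ --- recombines correctly through the matching term $a_0-a_1$, and the transitional values $k=n-1,n,n+1$ (bound $f$, but small part capped at $n+1$ rather than $k-1$) need separate bookkeeping, as does the verification that moderately large sets keep $|N_{Q_n}(A)|\ge\binom n2\ge f(k)$ so that no second large component can hide. Establishing the base cases of the induction, and confirming that the matching genuinely contributes the claimed $a_0-a_1$ fresh boundary vertices rather than ones already counted, are the remaining points requiring care.
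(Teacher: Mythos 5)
First, a framing remark: the paper does not prove this lemma at all — it is imported verbatim from \cite{18,17} — so your attempt has to be judged on its own terms rather than against an internal argument. Your reduction to vertex-isoperimetry and your identification of the extremal sets are sound: the boundary of a vertex plus $k-1$ of its neighbours is indeed $f(k)$, and $|N(N[v]\cup\{w\})|=\binom{n}{2}+n-3=g(n+2)$. The genuine gap is the engine you propose for the matching lower bound. The recursion $|N_{Q_n}(A)|\geqslant |N_{Q_{n-1}^0}(A_0)|+(a_0-a_1)$ is true but far too lossy, even after optimising or averaging over the splitting dimension. Take $A=\{v,\overline{v}\}$, a pair of antipodal vertices: they land on opposite sides of \emph{every} dimension split, so $a_0=a_1=1$ always, and your recursion certifies only $|N_{Q_n}(A)|\geqslant f_{n-1}(1)+0=n-1$, while the bound you need is $f_n(2)=2n-2$ (here $f_n$ denotes $f$ with the dimension made explicit, as in your write-up). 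In general your inductive step would require $f_{n-1}(a_0)+(a_0-a_1)\geqslant f_n(a_0+a_1)=f_{n-1}(a_0+a_1)+a_0+a_1$, i.e. $f_{n-1}(a_0)-f_{n-1}(a_0+a_1)\geqslant 2a_1$, which is false for every $a_1\geqslant 1$ throughout the range where $f_{n-1}$ is increasing. A workable induction must retain the boundary contributions of \emph{both} halves, using that $N_{Q_{n-1}^0}(A_0)$ and $N_{Q_{n-1}^1}(A_1)$ are disjoint subsets of $N_{Q_n}(A)$, together with matching terms; alternatively one invokes Harper's compression theorem outright — and then, contrary to your remark, no further quantitative induction is needed, only the computation of the boundaries of initial segments of the simplicial order and elementary comparisons with $f$ and $g$.

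The deduction from the isoperimetric bound back to the lemma also has a gap. You appeal to ``monotonicity of $f$'', but $f$ is a downward parabola with maximum at $x=n-\frac{1}{2}$, and the true profile dips back down to $f(n+1)=\binom{n}{2}<\binom{n}{2}+1=f(n-1)$ at the full ball — this non-monotonicity is precisely why part (ii) caps the small part at $n+1$ rather than $k-1$, so it cannot be waved through. More seriously, your set $A$ (the union of all non-largest components) can have more than $2^{n-1}$ vertices, and for such large sets there is no useful lower bound on $|N_{Q_n}(A)|$ at all (if $A$ omits a single vertex then $|N_{Q_n}(A)|=1$). The standard repair is to apply the isoperimetric bound not to $A$ but to the second-largest component — whose size, if it were $\geqslant k$, lies between $k$ and $(2^n-|S|)/2\leqslant 2^{n-1}$ — and, in the case where every component has fewer than $k$ vertices yet their union is large, to a greedy union of components whose total size lies in $[k,2k)$. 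With these repairs, and with Harper's theorem replacing your recursion, the outline becomes a complete proof; as written, both the key inequality and the final deduction fail.
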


If $Q_n$ is a spanning subgraph of a graph $G$, then the large component of $Q_n-S$ is contained in the large component of $G-S$ for a vertex subset $S$ of $Q_n$. So from Lemma \ref{Qn}, we have the following result.

\begin{Rem}\label{FA}
\rm{Lemma \ref{Qn} also holds for graphs $FQ_n$ and $AQ_n$.}
\end{Rem}

\section{The Star-structure Connectivity of Folded hypercube}
In this section, to prove $\kappa (FQ_n; K_{1,m})$ and $\kappa^s (FQ_n; K_{1,m})$, we first supply some lemmas for later use.
\begin{Lem}{\rm\cite {16}}
Any two vertices in $FQ_{n}$ have exactly two common neighbors for $n\geqslant 4$ if they have any.
\end{Lem}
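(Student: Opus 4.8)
The plan is to translate adjacency in $FQ_n$ into a statement about Hamming distance and then count common neighbors by a short case analysis. Write $d_H(x,y)$ for the number of bit positions in which $x$ and $y$ differ. By the definition of $FQ_n$, the neighbors of a vertex $u$ are exactly the $n$ vertices $u^1,\ldots,u^n$ (obtained by flipping a single bit) together with the vertex $\overline{u}$ (obtained by flipping all $n$ bits). Hence two vertices $x,y$ are adjacent in $FQ_n$ if and only if $d_H(x,y)=1$ or $d_H(x,y)=n$. I will also use the elementary fact that complementing all bits is distance-reversing, i.e.\ $d_H(\overline{x},y)=n-d_H(x,y)$, and that $\overline{u}$ is the unique vertex at Hamming distance $n$ from $u$.

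Now fix two distinct vertices $u$ and $v$ and let $w$ be a common neighbor. Then $d_H(w,u)\in\{1,n\}$ and $d_H(w,v)\in\{1,n\}$, giving four combinations that I would examine in turn. If $d_H(w,u)=d_H(w,v)=n$, then $w=\overline{u}=\overline{v}$, forcing $u=v$, so this case is vacuous. If $d_H(w,u)=1$ and $d_H(w,v)=n$, then $w=\overline{v}$ and, since $d_H(\overline{v},u)=n-d_H(u,v)$, this contributes exactly the single vertex $\overline{v}$ and only when $d_H(u,v)=n-1$; the symmetric case $d_H(w,u)=n$, $d_H(w,v)=1$ contributes exactly $\overline{u}$ under the same condition $d_H(u,v)=n-1$. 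Finally, if $d_H(w,u)=d_H(w,v)=1$, then $w$ lies at Hamming distance $1$ from both $u$ and $v$; since $Q_n$ is bipartite this is possible only when $d_H(u,v)=2$, in which case $w$ runs over exactly the two usual ``square corners''.

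Combining the four cases shows that $u$ and $v$ have a common neighbor only when $d_H(u,v)\in\{2,n-1\}$, and that each of these distances yields exactly two common neighbors: the two square corners when $d_H(u,v)=2$, and the pair $\{\overline{u},\overline{v}\}$ when $d_H(u,v)=n-1$ (these are distinct because $u\neq v$). The step requiring the most care — and the reason the hypothesis $n\geq4$ is needed — is to ensure that the distance values $2$, $n-1$, and $n$ are pairwise distinct, so that the ``hypercube-type'' and ``folded-type'' common neighbors never coincide or accumulate beyond two. For $n=3$ one has $2=n-1$, and then a pair at distance $2$ picks up all four vertices (the two square corners together with $\overline{u}$ and $\overline{v}$); this is precisely why the lemma fails at $n=3$ and must be stated for $n\geq4$.
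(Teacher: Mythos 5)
Your proof is correct. Note, however, that the paper does not prove this statement at all: it is quoted verbatim from reference \cite{16} (Zhu, Xu, Hou and Xu, \emph{On reliability of the folded hypercubes}), so there is no in-paper argument to compare against. Your Hamming-distance case analysis is the natural self-contained proof: adjacency in $FQ_n$ means $d_H\in\{1,n\}$, the four resulting cases for a common neighbor $w$ are handled correctly (the case $d_H(w,u)=d_H(w,v)=n$ is vacuous; the mixed cases contribute $\overline{u}$ and $\overline{v}$ exactly when $d_H(u,v)=n-1$; the case $d_H(w,u)=d_H(w,v)=1$ contributes the two square corners exactly when $d_H(u,v)=2$, the parity argument ruling out $d_H(u,v)=1$), and you correctly isolate why $n\geqslant 4$ is needed: for $n=3$ the distances $2$ and $n-1$ coincide, and indeed $FQ_3\cong K_{4,4}$, where two vertices in the same part have four common neighbors. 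One small point worth making explicit if this were written up: for $n\geqslant 4$ the two contributing distance values $2$ and $n-1$ are distinct, so the two types of common-neighbor pairs can never combine for a single pair $\{u,v\}$ --- you state this, and it is exactly the right observation to close the argument.
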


\begin{Lem}{\rm\cite {11}}For any positive integer $n$, we have the following statements:
\begin{enumerate}[(i)]
            \item \label{cond 1}
             $\kappa(FQ_{n})=n+1$,

            \item \label{cond 2}
              $FQ_{n}$ is a bipartite graph if and only if $n$ is odd, and

            \item \label{cond 3}
              If $FQ_{n}$ contains an odd cycle, then a shortest odd cycle has the length  $n+1$.
         \end{enumerate}
\end{Lem}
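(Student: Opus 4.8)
The plan is to handle the three parts by different means. Throughout, regard $FQ_n$ as the spanning supergraph of $Q_n$ obtained by adding the antipodal perfect matching $M=\{x\overline{x}\}$, and note that every vertex $x$ has exactly the neighbours $x^1,\dots,x^n,\overline{x}$, which are distinct for $n\geqslant 2$ because $\overline{x}$ differs from $x$ in all coordinates while each $x^i$ differs in one. Hence $FQ_n$ is $(n+1)$-regular, so $\kappa(FQ_n)\leqslant n+1$, giving the upper bound in (i).

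For the matching lower bound in (i), I would show that no $S\subseteq V(FQ_n)$ with $|S|\leqslant n$ disconnects the graph. If $|S|<n$ then $Q_n-S$ is connected since $\kappa(Q_n)=n$, and as $Q_n$ is spanning, $FQ_n-S$ is connected too. If $|S|=n$ and $FQ_n-S$ were disconnected, then $Q_n-S$ would also be disconnected, making $S$ a minimum cut of $Q_n$. Here I would use the known super-connectivity of $Q_n$ (for $n\geqslant 3$): every minimum cut of $Q_n$ is a vertex neighbourhood, so $Q_n-S$ is the disjoint union of an isolated vertex $v$ (with $S=N_{Q_n}(v)$) and a large component $B$. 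Since $\overline{v}\notin N_{Q_n}(v)=S$, the vertex $\overline{v}$ lies in $B$, and the complement edge $v\overline{v}$ reconnects $v$ to $B$ in $FQ_n-S$; thus $FQ_n-S$ is connected, a contradiction. The small cases $n\leqslant 2$ are checked directly. I expect the appeal to super-connectivity of $Q_n$ to be the only non-routine ingredient of part (i).

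For (ii) I would colour each vertex $x$ by the parity $\pi(x)$ of its Hamming weight. A hypercube edge $xx^i$ flips $\pi$, whereas a complement edge $x\overline{x}$ changes the weight from $\mathrm{wt}(x)$ to $n-\mathrm{wt}(x)$ and hence changes $\pi$ by $n\bmod 2$. Thus when $n$ is odd every complement edge also flips $\pi$, so $\pi$ is a proper $2$-colouring and $FQ_n$ is bipartite; when $n$ is even every complement edge joins two vertices of equal colour (e.g.\ $\mathbf{0}$ and $\mathbf{1}$), producing an odd closed walk, so $FQ_n$ is not bipartite. This gives (ii).

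For (iii), assume $n$ is even so that odd cycles exist. For a closed walk let $t$ be the number of complement edges used and $c_i$ the number of hypercube edges used in dimension $i$; returning to the start toggles coordinate $i$ a total of $c_i+t$ times, which must be even, so $c_i\equiv t\pmod 2$ for every $i$. The parity argument of (ii) shows an odd cycle uses an odd number $t$ of complement edges, whence every $c_i$ is odd and so $c_i\geqslant 1$. Therefore the length $t+\sum_{i=1}^{n}c_i\geqslant 1+n$, i.e.\ every odd cycle has length at least $n+1$. This bound is attained by the cycle $\mathbf{0},\overline{\mathbf{0}},(\overline{\mathbf{0}})^{1},(\overline{\mathbf{0}})^{1,2},\dots,(\overline{\mathbf{0}})^{1,2,\dots,n}=\mathbf{0}$, which traverses the single complement edge $\mathbf{0}\overline{\mathbf{0}}$ and then flips the $n$ coordinates back one at a time, giving an odd cycle of length $n+1$. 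Hence the shortest odd cycle has length exactly $n+1$. The per-dimension parity counting is the crux here; the rest is bookkeeping.
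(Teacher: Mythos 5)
The paper gives no proof of this lemma at all: it is imported verbatim from \cite{11} as a known result, so there is no in-paper argument to compare yours against, and your proof must stand on its own. It does --- all three parts are correct. In (i), regularity gives the upper bound; for the lower bound, the case $|S|<n$ correctly passes connectivity of $Q_n-S$ up to the spanning supergraph $FQ_n-S$, and the case $|S|=n$ correctly uses super-connectedness of $Q_n$ to force $S=N_{Q_n}(v)$, after which $\overline{v}$ lies in the large component and the complement edge $v\overline{v}$ reconnects $v$. In (ii), the Hamming-weight parity colouring is the right invariant (the same-colour edge $\mathbf{0}\,\overline{\mathbf{0}}$ for even $n$ plus an even-length $Q_n$-path gives an odd closed walk, hence an odd cycle). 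In (iii), the per-dimension count $c_i\equiv t\pmod 2$ together with $t$ odd yields length at least $n+1$, attained by your explicit cycle. Two remarks. First, super-connectivity of $Q_n$ is your only external ingredient; you could keep the argument inside the paper's own toolkit by invoking Lemma \ref{Qn}$(i)$ with $k=2$: for $n\geqslant 4$ one has $|S|=n<f(2)=2n-2$, so $Q_n-S$ consists of one large component plus at most one further vertex, which is exactly the structural conclusion you need (the case $n=3$ can be checked by hand). Second, part (i) as stated is actually false for $n=1$, since $FQ_1$ degenerates to $K_2$, whose connectivity is $1$, not $2$; so your claim that ``the small cases $n\leqslant 2$ are checked directly'' can only succeed for $n=2$. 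This is a defect of the lemma's ``any positive integer $n$'' phrasing inherited from the literature rather than of your argument, but you should restrict to $n\geqslant 2$ rather than assert a check that does not go through.
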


\begin{Lem}{\rm\cite {19}}\label{F 1-N}
Let $K_{1,m}$ be a star in $FQ_{n}$ with $n\geqslant 4$ and $2\leqslant m\leqslant n+1$. If $u$ is a vertex in $FQ_{n}-K_{1,m}$, then
$|N_{FQ_{n}}(u)\cap V(K_{1,m})|\leqslant 2$, and equality holds if and only if $u$ is adjacent to exactly two leaves of $K_{1,m}$.
\end{Lem}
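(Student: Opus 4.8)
The plan is to split the set $N_{FQ_n}(u)\cap V(K_{1,m})$ according to the two kinds of star vertices it can contain: the centre and the leaves. Write $c$ for the centre and $\ell_1,\dots,\ell_m$ for the leaves, so that $V(K_{1,m})=\{c,\ell_1,\dots,\ell_m\}$ and each $\ell_i$ is adjacent to $c$. The first ingredient I would record is that $FQ_n$ is triangle-free for $n\geqslant 4$: when $n$ is odd this is immediate because $FQ_n$ is bipartite \cite{11}, and when $n$ is even $FQ_n$ contains odd cycles but the shortest one has length $n+1\geqslant 5>3$ \cite{11}; in either case there is no $3$-cycle.

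The key step, and the one that does the real work, is to bound the number of leaves adjacent to $u$. Here the right viewpoint is to regard such leaves as common neighbours of $c$ and $u$: if $\ell_i$ is a leaf with $u\ell_i\in E(FQ_n)$, then $\ell_i$ is adjacent to both $c$ (by the star structure) and $u$, hence $\ell_i\in N_{FQ_n}(c)\cap N_{FQ_n}(u)$. By the common-neighbour result \cite{16}, any two vertices of $FQ_n$ with $n\geqslant 4$ have at most two common neighbours (exactly two if they have any). Applying this to the single pair $c,u$ shows at once that $u$ is adjacent to at most two leaves. I would stress that this is the place where one must resist applying the common-neighbour bound to \emph{pairs of leaves}: doing so is consistent with three leaves forming a $K_{2,3}$ together with $\{c,u\}$ and does not close the argument, whereas applying it to the pair $(c,u)$ does.

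It remains to bring the centre back in and to establish the characterisation of equality. If $u$ is adjacent to $c$, then triangle-freeness forbids $u$ from being adjacent to any neighbour of $c$, in particular to any leaf, so $N_{FQ_n}(u)\cap V(K_{1,m})=\{c\}$ has size $1$. If $u$ is not adjacent to $c$, then every element of $N_{FQ_n}(u)\cap V(K_{1,m})$ is a leaf, and by the previous paragraph there are at most two of them. In both cases $|N_{FQ_n}(u)\cap V(K_{1,m})|\leqslant 2$. For the equality clause, if $u$ is adjacent to exactly two leaves then it cannot also be adjacent to $c$ (that would create a triangle $u,c,\ell_i$), so the intersection is precisely those two leaves and has size $2$; conversely, if the intersection has size $2$ it cannot contain $c$ together with a leaf (again a triangle), so it consists of two leaves, i.e.\ $u$ is adjacent to exactly two leaves.

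The only genuine obstacle is the second paragraph, namely seeing that the count of leaves incident to $u$ is controlled by the common neighbours of the single pair $(c,u)$, rather than by repeatedly applying the two-common-neighbour bound to pairs of leaves. Once that reformulation is in place, the remaining case analysis is routine and uses only triangle-freeness. A minor point to verify is that \cite{16} is invoked in the form ``at most two'', which is valid whether or not $c$ and $u$ are adjacent, since the hypothesis ``if they have any'' is exactly what applies the moment $u$ has a leaf neighbour.
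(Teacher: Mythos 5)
Your proof is correct and complete. One thing worth knowing: this paper does not actually prove the lemma at all---it is quoted with a citation to \cite{19} (the authors' earlier work)---so there is no in-paper proof to compare yours against; what you have written is a valid self-contained derivation from exactly the two facts this paper does quote, namely the common-neighbour property of $FQ_n$ from \cite{16} and the bipartiteness/odd-girth facts from \cite{11}, which together give triangle-freeness for $n\geqslant 4$. Both hinges of your argument hold up: counting the leaves adjacent to $u$ as common neighbours of the single pair $(c,u)$ is legitimate, since $u\notin V(K_{1,m})$ guarantees $u\neq c$ and the cited statement bounds the common neighbours of \emph{any} two distinct vertices, adjacent or not, by two; and triangle-freeness correctly rules out the centre and a leaf lying simultaneously in $N_{FQ_n}(u)\cap V(K_{1,m})$, which settles both the case $uc\in E(FQ_n)$ (intersection of size $1$) and the ``size $2$ if and only if two leaves'' characterisation. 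Your side remark is also on target: applying the two-common-neighbour bound to pairs of leaves instead of to $(c,u)$ would not close the argument, so the reduction to the pair $(c,u)$ is precisely the idea that makes the proof work.
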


\begin{Lem}{\rm\cite {19}}\label{F 2-N}
Let $K_{1,m}$ be a star in $FQ_{n}$ with $n\geqslant 5$ and $2\leqslant m\leqslant n+1$. If $C$ is a connected subgraph in $FQ_{n}-K_{1,m}$ with $|V(C)|=k\geqslant 2$, then
$|N_{FQ_{n}}(C)\cap V(K_{1,m})|\leqslant 2(k-1)$, and equality holds only  if $C$ is a star in $FQ_{n}$.
\end{Lem}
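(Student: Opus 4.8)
The plan is to concentrate all of the arithmetic in the case $|V(C)|=2$ and then bootstrap to general $k$ by induction. Write $S=V(K_{1,m})$, and let $c$ be the centre and $l_1,\dots,l_m$ the leaves of $K_{1,m}$, so each leaf is one of $c^1,\dots,c^n,\overline{c}$. Two structural facts about $FQ_n$ drive everything: for $n\geqslant 5$ the graph $FQ_n$ is triangle-free (its shortest odd cycle has length $n+1\geqslant 6$ and its shortest even cycle has length $4$), and by the first lemma of this section any two vertices have exactly two common neighbours whenever they have any. Combined with Lemma \ref{F 1-N}, these determine the local picture near $S$: since there are no triangles, $u\sim c$ forbids $u$ from being adjacent to any leaf, so by Lemma \ref{F 1-N} a vertex meeting $S$ in two points is adjacent to exactly two leaves.

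The core is the following base-case claim: \emph{if $u,v$ are adjacent vertices of $FQ_n-K_{1,m}$ with $n\geqslant 5$, then $|N_{FQ_n}(\{u,v\})\cap S|\leqslant 2$; moreover, if $u$ is adjacent to exactly two leaves, then $N_{FQ_n}(v)\cap S=\varnothing$.} To prove it, suppose $u$ is adjacent to two leaves $l_i,l_j$. Then $u$ is a common neighbour of $l_i,l_j$ distinct from $c$, and since $l_i,l_j$ have exactly two common neighbours, this identifies $u$ explicitly (e.g. $u=c^{d_i,d_j}$ when $l_i=c^{d_i}$, $l_j=c^{d_j}$, and $u=\overline{c^{d_i}}$ when one leaf is $\overline{c}$). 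Any neighbour $v$ of this $u$ that also met $S$ would be a common neighbour of $u$ and a vertex of $S$; resolving the two incident edges into hypercube- or complement-type and comparing the number of coordinates in which the endpoints differ, every configuration forces two vertices to agree in all but $n-1$ or $n-2$ coordinates while differing in at most $2$ or $3$, i.e. $n\leqslant 4$, against $n\geqslant 5$. This yields $N_{FQ_n}(v)\cap S=\varnothing$ and the bound $2$; if instead neither of $u,v$ meets two leaves, each meets $S$ in at most one vertex and the bound is immediate.

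Granting the claim, the upper bound $|N_{FQ_n}(C)\cap S|\leqslant 2(k-1)$ follows by induction on $k$: the case $k=2$ is the claim, and for $k\geqslant 3$ one deletes a leaf $x$ of a spanning tree of $C$, so $C'=C-x$ is connected on $k-1$ vertices and $N_{FQ_n}(C)\cap S\subseteq(N_{FQ_n}(C')\cap S)\cup(N_{FQ_n}(x)\cap S)$, whence Lemma \ref{F 1-N} and the inductive hypothesis give $|N_{FQ_n}(C)\cap S|\leqslant 2(k-2)+2=2(k-1)$. For the equality clause I would argue by counting instead of unwinding the induction. Call a vertex of $C$ \emph{heavy} if it is adjacent to exactly two leaves (contributing $2$), and let $p$ be their number; every other vertex contributes at most $1$, so $|N_{FQ_n}(C)\cap S|\leqslant 2p+(k-p)=k+p$, and equality forces $p\geqslant k-2$. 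By the base-case claim every $C$-neighbour of a heavy vertex contributes $0$, so the heavy vertices form an independent set never adjacent to a vertex contributing $1$. If $p=k-1$, connectivity forces the unique remaining vertex to be adjacent to all heavy vertices, so $C=K_{1,k-1}$; if $p=k-2$, equality would need both non-heavy vertices to contribute $1$, yet connectivity forces one of them to be adjacent to a heavy vertex and hence to contribute $0$ — a contradiction. Thus equality forces $C$ to be a star.

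The main obstacle is the base-case claim, and within it the bookkeeping when $\overline{c}$ is itself a leaf: there each adjacency must first be separated into hypercube- versus complement-type before the coordinate count can be carried out, producing several parallel subcases. The hypothesis $n\geqslant 5$ enters exactly here — every ``extra'' neighbour is forced into a differ-in-$(\geqslant n-2)$ condition incompatible with differing in a bounded number of coordinates — which is precisely what fails at $n=4$ and explains the stated threshold.
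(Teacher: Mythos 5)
You should know at the outset that this paper never proves the statement: Lemma \ref{F 2-N} is imported verbatim from \cite{19} with no argument given here, so there is no in-paper proof to measure yours against; what follows judges your proposal on its own terms and against the paper's closest analogue, its treatment of $AQ_n$. Your proof is correct, and its architecture --- reduce everything to a sharp bound for a single edge, then induct for the inequality, then settle the equality clause by a global count --- mirrors what the paper does for the augmented cube, where the lower-bound argument (Lemma \ref{AL}) rests on the edge bound of Lemma \ref{K1M}, itself obtained from the forbidden subgraphs of Lemmas \ref{T} and \ref{H}. Your key edge claim checks out concretely. By the two-common-neighbours lemma quoted from \cite{16}, a vertex of $FQ_n-K_{1,m}$ adjacent to two leaves must be $c^{d_i,d_j}$ (leaves $c^{d_i},c^{d_j}$) or $\overline{c^{d_i}}$ (leaves $\overline{c},c^{d_i}$), where $c$ is the centre; the neighbours of these vertices outside the star have the forms $c^{d_i,d_j,e}$, $\overline{c^{d_i,d_j}}$ and $\overline{c^{d_i,f}}$, and their Hamming distances to every vertex of $\{c\}\cup\{c^1,\dots,c^n\}\cup\{\overline{c}\}$ lie in $\{2,3,4,\,n-3,n-2,n-1\}$, none of which equals $1$ or $n$ once $n\geqslant 5$. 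So such neighbours miss the star entirely, which is exactly your ``moreover'' clause, and $|N_{FQ_n}(\{u,v\})\cap V(K_{1,m})|\leqslant 2$ follows for every edge $uv$ of $FQ_n-K_{1,m}$; your induction and heavy-vertex count then go through as written.

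Two boundary cases in the equality argument need one line each, though neither is a real obstruction. Your count only forces $p\geqslant k-2$, and you treat $p=k-1$ and $p=k-2$ but not $p=k$; that case dies immediately because, as you yourself note, heavy vertices form an independent set, and a connected graph on $k\geqslant 2$ vertices cannot be edgeless. Second, your contradiction for $p=k-2$ presupposes that a heavy vertex exists, i.e. $k\geqslant 3$; when $k=2$ (so $p=0$) there is no contradiction to be had, but none is needed, since a two-vertex connected graph is already the star $K_{1,1}$. With those two sentences inserted, your proof is complete and self-contained, and the isolation property behind your edge claim is in fact stronger than the inequality it is quoted to support, which is what makes the equality clause come out so cleanly.
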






From Remark \ref{FA} and Lemmas \ref{Qn}, \ref{F 1-N} and \ref{F 2-N}, we obtain a lower bound of $\kappa^{s}(FQ_{n};K_{1,m})$ as follows.
\begin{Lem}\label{FL}
If $n\geqslant 5$, and $2\leqslant m\leqslant n-2$ or $m=n-1\geqslant6$, then $\kappa^{s}(FQ_{n};K_{1,m})\geqslant \lceil\frac{n+1}{2}\rceil$.
\end{Lem}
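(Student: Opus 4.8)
The plan is to prove the contrapositive: I will show that every $K_{1,m}$-substructure cut $F=\{T_1,\dots,T_r\}$ of $FQ_n$, with $S=V(F)=\bigcup_i V(T_i)$, satisfies $r\geqslant\lceil\frac{n+1}{2}\rceil$. Since each $T_i$ is isomorphic to a connected subgraph of $K_{1,m}$, it is a star on at most $m+1$ vertices, so $|S|\leqslant r(m+1)$; in particular $|S|\ll 2^{n}-1$, so $FQ_n-S$ cannot be a single vertex and must therefore be disconnected. Let $C$ be a component of $FQ_n-S$ of smallest order $k$; because $k$ is tiny compared with $2^{n}$, $C$ is never the unique large component guaranteed by Remark \ref{FA}. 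The whole argument revolves around the separating set $N_{FQ_n}(C)\subseteq S$: I will bound $|N_{FQ_n}(C)|$ from below using the connectivity and isoperimetry of $FQ_n$, and from above using how few vertices of a single star can neighbour $C$.

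For the upper bound, Lemma \ref{F 1-N} gives $|N_{FQ_n}(C)\cap V(T_i)|\leqslant 2$ when $k=1$, and Lemma \ref{F 2-N} gives $|N_{FQ_n}(C)\cap V(T_i)|\leqslant 2(k-1)$ when $k\geqslant 2$; since also $N_{FQ_n}(C)\cap V(T_i)\subseteq V(T_i)$, summing over the $r$ stars yields $|N_{FQ_n}(C)|\leqslant r\cdot\min\{2(k-1),\,m+1\}$. For the lower bound I will use two facts. First, since $C$ is a single component of $FQ_n-N_{FQ_n}(C)$ of order $k$, the appropriate case of Lemma \ref{Qn} (via Remark \ref{FA}) forces $|N_{FQ_n}(C)|\geqslant f(k)$: otherwise the small components of $FQ_n-N_{FQ_n}(C)$ would total at most $k-1<k$ vertices, contradicting that $C$ alone has $k$. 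Second, counting the edges leaving $C$ gives $e(C,S)=k(n+1)-2e(C)$, and Lemma \ref{F 1-N} bounds the edges from $C$ into each star by $2k$, so $k(n+1)-2e(C)\leqslant 2rk$, i.e. $r\geqslant\frac{n+1}{2}-\frac{e(C)}{k}$.

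The cases $k=1$ and $k=2$ fall out immediately. When $k=1$, the $(n+1)$-regularity of $FQ_n$ gives $n+1=|N_{FQ_n}(C)|\leqslant 2r$, whence $r\geqslant\lceil\frac{n+1}{2}\rceil$. When $k=2$, the lower bound $f(2)=2n-2\leqslant|N_{FQ_n}(C)|\leqslant 2r$ gives the even stronger $r\geqslant n-1$. From here on I may assume $k\geqslant 3$, and Lemma \ref{Qn} simultaneously caps $k$: since $|S|\leqslant r(m+1)$ stays below the relevant value of $f$, the total order of the small components is controlled and the smallest one satisfies $3\leqslant k\leqslant n-2$ (the largest admissible orders, where parts (ii)--(iii) and the function $g$ are needed, I will treat separately), keeping me inside the range in which $|N_{FQ_n}(C)|\geqslant f(k)$ is valid.

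The hard part is the middle range $k\approx\frac{m+3}{2}$, where $2(k-1)$ and $m+1$ are comparable. There the vertex estimate yields only $r\geqslant f(k)/\min\{2(k-1),m+1\}$ and the edge estimate yields only $r\geqslant\frac{n+1}{2}-\frac{e(C)}{k}$, and for a connected $C$ (so $e(C)\geqslant k-1$) each of these falls short of $\lceil\frac{n+1}{2}\rceil$ by exactly one. Closing this unit-sized gap is where I expect the real effort to lie: I will couple the two estimates through the trade-off $|N_{FQ_n}(C)|\leqslant e(C,S)=k(n+1)-2e(C)$, which caps $e(C)$ once $|N_{FQ_n}(C)|\geqslant f(k)$ is imposed; I will sharpen the per-star count using the equality clause of Lemma \ref{F 2-N} (equality needs $C$ itself to be a star, a case I treat on its own, since a star's neighbourhood in $FQ_n$ is strictly larger than $f(k)$); and I will exploit parity, using that for even $n$ the graph $FQ_n$ is non-bipartite to extract the extra structural constraint that recovers the missing unit. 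The endpoint $m=n-1$ will be argued separately, which is precisely why it carries the side condition $m=n-1\geqslant 6$.
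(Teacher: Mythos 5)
Your framework---contradiction on a smallest component $C$ of order $k$, the $k=1$ case via Lemma \ref{F 1-N}, and the cap on $k$ obtained by applying Remark \ref{FA} and Lemma \ref{Qn} to the whole deleted set $S$---matches the paper's. But the proposal stops short of a proof: in what you yourself call the hard middle range you only record that both of your estimates fall short of $\lceil\frac{n+1}{2}\rceil$ by one, and the devices you offer to close that gap (coupling the two estimates, the equality clause of Lemma \ref{F 2-N}, non-bipartiteness of $FQ_n$ for even $n$) are left as intentions, not arguments. It is also doubtful they suffice: the equality clause of Lemma \ref{F 2-N} only excludes the single extremal value $|N_{FQ_n}(C)\cap V(T_i)|=2(k-1)$, and near-extremal configurations still leave $r$ one short; and no parity argument is ever made concrete. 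So there is a genuine gap, located exactly where you located it.

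The underlying problem is that both of your quantitative tools are structurally too weak, and the remedy is not more casework but a different application of Lemma \ref{F 2-N}. Your per-star bound $\min\{2(k-1),m+1\}$ grows with $k$, and your edge count $e(C,S)\leqslant 2rk$ uses Lemma \ref{F 1-N} vertex by vertex, so each star may catch $2$ edges from \emph{every} vertex of $C$. The paper instead fixes a single edge $uv$ of $C$ (it exists once $k\geqslant 2$) and applies Lemma \ref{F 2-N} to the two-vertex connected subgraph on $\{u,v\}$: each star of $F$ contains at most $2$ neighbors of the pair \emph{jointly}, not $2$ per endpoint. Since $FQ_n$ is triangle-free for $n\geqslant 5$ (it is bipartite for odd $n$, and its shortest odd cycle has length $n+1$), $|N_{FQ_n}(\{u,v\})|=2n$ exactly; at most $k-2$ of these vertices lie in $C$, so at least $2n-(k-2)$ lie in $S$ and hence $2n-(k-2)\leqslant 2r$, i.e.\ $r\geqslant n-\frac{k-2}{2}$. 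Under the contrary assumption $r\leqslant\lceil\frac{n+1}{2}\rceil-1\leqslant\frac{n}{2}$ this forces $k>n+1$, which flatly contradicts the cap $k\leqslant m\leqslant n-3$ (resp.\ $k\leqslant n+1$ for $m=n-2$, and for $m=n-1$ with $n\geqslant 7$ via $g$ and part (iii) of Lemma \ref{Qn}) that you already extracted by applying Lemma \ref{Qn} to $S$. With this one change your skeleton closes completely: there is no middle range, and no equality analysis or parity argument is needed. The point is quantitative: the pairwise bound has leading term $n$, whereas your averaged bound $r\geqslant\frac{n+1}{2}-\frac{e(C)}{k}$ has leading term $\frac{n+1}{2}$, which is precisely why yours misses the threshold by a unit and the paper's clears it with room to spare.
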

\begin{proof}
Suppose to the contrary that $\kappa^{s}(FQ_{n};K_{1,m})<\lceil\frac{n+1}{2}\rceil$. Then there exists a $K_{1,m}$-substructure cut $\mathcal{F}$ of $FQ_{n}$ such that $|\mathcal{F}|\leqslant\lceil\frac{n+1}{2}\rceil-1$. It is easy to find $|V(\mathcal{F})|\leqslant (m+1)(\lceil\frac{n+1}{2}\rceil-1)\leqslant\frac{n(m+1)}{2}$.
 We distinguish two cases as follows.

\textbf{Case $1$}. $FQ_{n}-V(\mathcal{F})$ contains an isolated vertex, say $u$.

In this case, $V(\mathcal{F})$ contains the $n+1$ neighbors of $u$. By Lemma \ref{F 1-N}, we have $|N_{FQ_{n}}(u)\cap V(K_{1,m'})|\leqslant 2$ for each member $K_{1,m'}$$(0\leqslant m'\leqslant m)$ in $\mathcal{F}$. So we obtain that
\begin{eqnarray}\nonumber
n+1=|N_{FQ_{n}}(u)|\leqslant \sum_{K_{1,m'}\in \mathcal{F}}|N_{FQ_{n}}(u)\cap V(K_{1,m'})|\leqslant 2|\mathcal{F}|\leqslant 2(\lceil\frac{n+1}{2}\rceil-1)\leqslant n,
\nonumber
\end{eqnarray} a contradiction.

\textbf{Case $2$}. $FQ_{n}-V(\mathcal{F})$ contains no isolated vertex.

Let $C$ be a smallest component of $FQ_{n}-V(\mathcal{F})$ with $k:=|V(C)|$. Then $k\geqslant 2$. For an edge $uv$ in $C$, since $FQ_n$ is triangle-free for $n\geqslant 5$, $|N_{FQ_n}(\{u,v\})|=2n.$ Then we have $|N_{C}(\{u,v\})|\leqslant k-2$ and $|N_{FQ_n}(\{u,v\})\cap V(\mathcal{F})|\geqslant 2n-(k-2)$. Since $|N_{FQ_{n}}(\{u,v\})\cap V(K_{1,m'})|\leqslant 2$ for $0\leqslant m'\leqslant m$ by Lemma \ref{F 2-N}, $\lceil\frac{n+1}{2}\rceil>|\mathcal{F}|\geqslant \lceil\frac{2n-(k-2)}{2}\rceil$. Then $k> n+1$.

For $m\leqslant n-3$ and $n\geqslant 5$,
$$
\aligned
f(m+1)-1-|V(\mathcal{F})|\geqslant&-\frac{(m+1)^2}{2}+(n-\frac{1}{2})(m+1)+1-1-\frac{n(m+1)}{2}\\
=&\frac{1}{2}[-(m+1)^2+(m+1)(2n-1)-n(m+1)]\\
=&\frac{1}{2}[-(m+1)^2+(m+1)(n-1)]\\
=&\frac{1}{2}(m+1)(n-2-m)>0,
\endaligned
$$
which implies that $|V(\mathcal{F})|<f(m+1)-1$. Then by Remark \ref{FA} and Lemma \ref{Qn} $(i)$, $FQ_n-V(\mathcal{F})$ contains exactly one large component of order at least $2^n-|V(\mathcal{F})|-m$ with the small components having at most $m$ vertices in total. Thus $k\leqslant m\leqslant n-3$, a contradiction.

For $m=n-2$, we have $|V(\mathcal{F})|\leqslant\frac{n(m+1)}{2}=\frac{n(n-1)}{2}$ and
$$
\aligned
f(n)-|V(\mathcal{F})|\geqslant& -\frac{n^2}{2}+(n-\frac{1}{2})n+1-\frac{n(n-1)}{2}\\
=&-\frac{n^2}{2}+n^2-\frac{n}{2}+1-\frac{n^2}{2}+\frac{n}{2}=1>0,
\endaligned
$$
that is, $|V(\mathcal{F})|<f(n)$. Then by Remark \ref{FA} and Lemma \ref{Qn} $(ii)$, $FQ_n-V(\mathcal{F})$
contains exactly one large component of order at least $2^n-|V(\mathcal{F})|-(n+1)$ with the small components having at most $n+1$ vertices in total. Thus $k\leqslant n+1$, a contradiction.

For $m=n-1$ and $n>6$, $|V(\mathcal{F})|\leqslant\frac{n(m+1)}{2}=\frac{n^2}{2}$ and
$$
\aligned
g(n+2)-|V(\mathcal{F})|\geqslant&-\frac{(n+2)^2}{2}+(2n-\frac{3}{2})(n+2)+2-n^2-\frac{n^2}{2}\\
=&\frac{1}{2}[-(n+2)^2+(4n-3)(n+2)+4-2n^2-n^2]\\
=&\frac{1}{2}[(n+2)(3n-5)+4-3n^2]=\frac{1}{2}(n-6)>0,
\endaligned
$$
which implies that $|V(\mathcal{F})|<g(n+2)$. Then by Remark \ref{FA} and Lemma \ref{Qn} $(iii)$, $FQ_n-V(\mathcal{F})$ contains exactly one large component of order at least $2^n-|V(\mathcal{F})|-(n+1)$ with the small components having at most $n+1$ vertices in total. Thus $k\leqslant n+1$, a contradiction.
\end{proof}

\begin{Lem}\label{FU}
For $2\leqslant m \leqslant n+1$, $n\geqslant3$, $\kappa(FQ_{n};K_{1,m})\leqslant \lceil\frac{n+1}{2}\rceil$.
\end{Lem}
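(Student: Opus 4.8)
The plan is to prove the bound by exhibiting, for an arbitrarily chosen vertex $u$ of $FQ_n$, an explicit $K_{1,m}$-structure cut $\mathcal{F}$ of size exactly $\lceil\frac{n+1}{2}\rceil$ whose deletion isolates $u$. Since $FQ_n$ is $(n+1)$-regular, $u$ has the $n+1$ neighbours $u^1,\dots,u^n,\overline{u}$, and it suffices to cover all of them by $\lceil\frac{n+1}{2}\rceil$ copies of $K_{1,m}$ while keeping $u$ itself outside every star. The key observation is that two neighbours of $u$ can be absorbed as leaves of a single star centred at a \emph{common} neighbour of the two: for $i\neq j$ the vertices $u^i$ and $u^j$ have exactly the two common neighbours $u$ and $u^{i,j}$, so a star centred at $u^{i,j}$ contains both $u^i$ and $u^j$ as leaves; likewise $\overline{u}$ and $u^i$ have exactly the two common neighbours $u$ and $(\overline{u})^i$ (note $(\overline{u})^i=\overline{u^i}$), so a star centred at $(\overline{u})^i$ contains both $\overline{u}$ and $u^i$. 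Using the common neighbour \emph{other than} $u$ as the centre is precisely what keeps $u$ out of the star.

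Concretely I would split on the parity of $n$. If $n$ is odd, then $\lceil\frac{n+1}{2}\rceil=\frac{n+1}{2}$, and I pair the neighbours as $(\overline{u},u^1),(u^2,u^3),\dots,(u^{n-1},u^n)$, giving $\frac{n+1}{2}$ pairs; the first pair uses the star centred at $(\overline{u})^1$, and each remaining pair $(u^{2t},u^{2t+1})$ uses the star centred at $u^{2t,2t+1}$. If $n$ is even, then $\lceil\frac{n+1}{2}\rceil=\frac{n}{2}+1$, and I pair $(u^1,u^2),\dots,(u^{n-1},u^n)$ into $\frac{n}{2}$ stars centred at $u^{1,2},\dots,u^{n-1,n}$, and give the leftover neighbour $\overline{u}$ its own star centred at $(\overline{u})^1$ with $\overline{u}$ as one of its leaves. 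In every case the chosen centre has $n+1$ neighbours, of which at most two are already designated as leaves, so since $m\leqslant n+1$ I can complete each subgraph to a genuine $K_{1,m}$ by selecting $m-2$ (respectively $m-1$) further neighbours of the centre as extra leaves.

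It then remains to verify the three invariants that make $\mathcal{F}$ a valid $K_{1,m}$-structure cut of size $\lceil\frac{n+1}{2}\rceil$: each member is a $K_{1,m}$ (ensured by $m\leqslant n+1$ above), $N_{FQ_{n}}(u)\subseteq V(\mathcal{F})$ (ensured by the pairing), and $u\notin V(\mathcal{F})$. For the last, every centre lies at Hamming distance $2$ from $u$ (the vertices $u^{i,j}$) or differs from $u$ in $n-1\geqslant 2$ coordinates (the vertices $(\overline{u})^i$), so for $n\geqslant 3$ no centre equals or is adjacent to $u$; hence neither the centres, the designated leaves (which are neighbours of $u$), nor the freely chosen extra leaves can be $u$. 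Since $|V(\mathcal{F})|\leqslant (m+1)\lceil\frac{n+1}{2}\rceil\ll 2^n$, the graph $FQ_n-V(\mathcal{F})$ retains vertices besides $u$, so $u$ is isolated and $FQ_n-V(\mathcal{F})$ is disconnected, giving $\kappa(FQ_{n};K_{1,m})\leqslant\lceil\frac{n+1}{2}\rceil$. The one point I would treat with care is the interaction between the folded neighbour $\overline{u}$ and the parity split; I would also note that for small $n$ (such as $n=3$) distinct pairs may yield coinciding centres, so the stars can overlap, but since the definition of a structure cut only requires its members to be subgraphs isomorphic to $K_{1,m}$, such overlaps leave both the validity of the cut and the count $|\mathcal{F}|=\lceil\frac{n+1}{2}\rceil$ intact.
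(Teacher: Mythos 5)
Your construction is correct and essentially identical to the paper's: both isolate a fixed vertex $u$ by covering its $n+1$ neighbours with $\lceil\frac{n+1}{2}\rceil$ stars, each centred at the second common neighbour of a pair of neighbours of $u$ (a vertex of the form $u^{i,j}$ or $(\overline{u})^{i}$), with the remaining leaves chosen freely among the centre's other neighbours, exactly as in the paper's choice of the sets $S_i$. Two cosmetic caveats that do not affect validity: the claim of ``exactly two common neighbours'' is false in $FQ_3$ (there are four; only the \emph{existence} of a second common neighbour distinct from $u$ is needed), and your counting argument that $FQ_n-V(\mathcal{F})$ retains vertices besides $u$ can fail for $n\in\{3,4\}$ with $m$ near $n+1$, but this is harmless because the definition of a subgraph cut also allows the deletion to leave only the trivial graph.
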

\begin{proof}
Let $u=00 \ldots 0$ be a vertex in $FQ_{n}$.  Then $N_{FQ_{n}}(u)=\{\overline{u}\}\cup\{u^{i}|1\leqslant i\leqslant n\}$. We assume that the elements in $\{1, 2,\ldots, n\}$ are taken arithmetic operations on module $n$. For any integer $i$ with $1\leqslant i\leqslant \lfloor\frac{n}{2}\rfloor$, we set
\begin{equation*}\label{}S_{i}=
\left\{ \begin{aligned}
&\{u^{2i-1},u^{2i},u^{2i-1,2i}\}\cup \{u^{2i-1,2i,2i+j}|1\leqslant j\leqslant m-2\}, &\mbox{if}~~ m\leqslant n,\\
&\{u^{2i-1},u^{2i},u^{2i-1,2i}\}\cup \{u^{2i-1,2i,2i+j}|1\leqslant j\leqslant n-2\}\cup \{(\overline{u})^{2i-1,2i}\}, &\mbox{if}~~m=n+1.
\end{aligned}\right.
\end{equation*}
Let\begin{equation*}\label{}S_{\lceil\frac{n+1}{2}\rceil}=
\left\{ \begin{aligned}
&\{u^{n},\overline{u},(\overline{u})^{n}\}\cup \{(\overline{u})^{n,j}|1\leqslant j\leqslant m-2\}, &\mbox{if}~~n~ is~odd;\\
&\{\overline{u}, u^{1}, (\overline{u})^{1}\}\cup\{(\overline{u})^{1,j}|2\leqslant j\leqslant m-1\}, &\mbox{if}~~n~is ~even.
\end{aligned}\right.
\end{equation*}

We can find $S_{i}$ induces $K_{1,m}$ with the center $u^{2i-1,2i}$ for $1\leqslant i\leqslant \lfloor\frac{n}{2}\rfloor$, $S_{\frac{n+1}{2}}$ and $S_{\frac{n+2}{2}}$ induce $K_{1,m}$ with the center $\overline{u}^{n}$ and $\overline{u}^{1}$, respectively.

Let $S=\cup_{j=1}^{\lceil\frac{n+1}{2}\rceil}S_{j}$. Then $N_{FQ_{n}}(u)\subseteq V(S)$, which implies that $u$ is an isolated vertex of $FQ_{n}-S$. Thus the set $S$ forms a $K_{1,m}$-structure cut of $FQ_{n}$ with $|S|=\lceil\frac{n+1}{2}\rceil$.
\end{proof}
\begin{figure}[!htbp]
\begin{center}
\includegraphics[height=9cm]{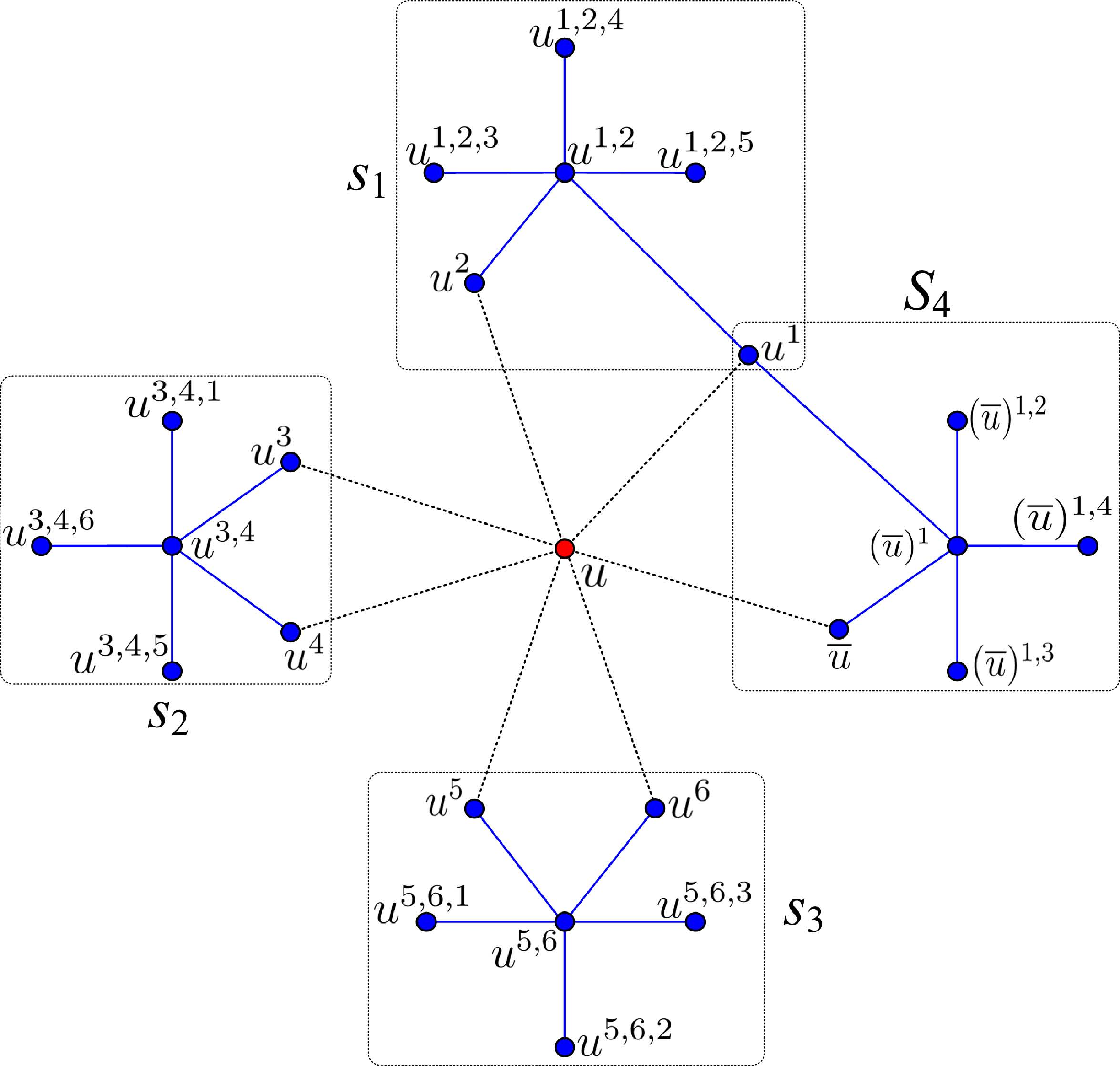}
\caption{\label{}\small{A $K_{1,5}$-structure cut of $FQ_6$}}
\end{center}
\end{figure}
The following theorem is a straightforward result from Lemmas \ref{FL} and \ref{FU}.

\begin{The}\label{FQ}
For $n\geqslant 7$ and $2\leqslant m\leqslant n-1$, we have $\kappa(FQ_n; K_{1,m})
=\kappa^{s}(FQ_{n};K_{1,m})= \lceil\frac{n+1}{2}\rceil$.
\end{The}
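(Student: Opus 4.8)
The plan is to derive the equality as a sandwich, combining the lower bound of Lemma \ref{FL} with the upper bound of Lemma \ref{FU} and the definitional inequality $\kappa^{s}(FQ_{n};K_{1,m})\leqslant\kappa(FQ_{n};K_{1,m})$ recorded in the introduction. Because both bounds are already in hand, the only genuine task at this level is to confirm that the hypotheses $n\geqslant 7$ and $2\leqslant m\leqslant n-1$ of the theorem lie inside the hypotheses of each lemma.

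First I would check the lower bound. Lemma \ref{FL} requires $n\geqslant 5$ together with either $2\leqslant m\leqslant n-2$ or $m=n-1\geqslant 6$. Since $n\geqslant 7$, the range $2\leqslant m\leqslant n-2$ is covered directly, and the endpoint $m=n-1$ satisfies $m=n-1\geqslant 6$; hence Lemma \ref{FL} applies throughout and gives $\kappa^{s}(FQ_{n};K_{1,m})\geqslant\lceil\frac{n+1}{2}\rceil$. (It is exactly this endpoint that forces the hypothesis $n\geqslant 7$.) Next I would check the upper bound. Lemma \ref{FU} holds for $2\leqslant m\leqslant n+1$ and $n\geqslant 3$, and since $2\leqslant m\leqslant n-1\leqslant n+1$ and $n\geqslant 7>3$, it yields $\kappa(FQ_{n};K_{1,m})\leqslant\lceil\frac{n+1}{2}\rceil$.

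Stringing the three inequalities together gives
\begin{equation*}
\lceil\tfrac{n+1}{2}\rceil\leqslant\kappa^{s}(FQ_{n};K_{1,m})\leqslant\kappa(FQ_{n};K_{1,m})\leqslant\lceil\tfrac{n+1}{2}\rceil,
\end{equation*}
so every inequality is forced to be an equality and both connectivities equal $\lceil\frac{n+1}{2}\rceil$, as claimed.

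The main obstacle is therefore not in this final assembly --- which is essentially immediate --- but in the lower bound Lemma \ref{FL} on which it relies. The delicate point there is to rule out small components of every admissible size by substituting the crude estimate $|V(\mathcal{F})|\leqslant\frac{n(m+1)}{2}$ into the three component-structure regimes of Lemma \ref{Qn} (transferred to $FQ_n$ via Remark \ref{FA}), matching each regime to the correct interval of $m$ and checking that $f(m+1)-1-|V(\mathcal{F})|$, $f(n)-|V(\mathcal{F})|$, and $g(n+2)-|V(\mathcal{F})|$ remain positive. These margins shrink to $O(1)$ near the endpoints (for instance, the $m=n-1$ margin is only $\frac{1}{2}(n-6)$), which is why the clean statement requires $n\geqslant 7$ and stops at $m=n-1$ rather than $m=n$.
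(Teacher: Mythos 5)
Your proposal is correct and is exactly the paper's argument: the paper also obtains the theorem as an immediate sandwich of the lower bound from Lemma \ref{FL} (whose case $m=n-1\geqslant 6$ is precisely what forces $n\geqslant 7$), the upper bound from Lemma \ref{FU}, and the definitional inequality $\kappa^{s}(FQ_n;K_{1,m})\leqslant\kappa(FQ_n;K_{1,m})$. Your verification of the hypothesis ranges matches the paper's (implicit) reasoning, so there is nothing to add.
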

In fact, we find the results of Theorem \ref{FQ} also hold for $5\leqslant n\leqslant 6$ and $2\leqslant m\leqslant n-2$ by Lemmas \ref{FL} and \ref{FU}.

\section{The Star-structure Connectivity of Augmented cube}
In this section we determine $\kappa(AQ_n; K_{1,m} )$ and $\kappa^{s}(AQ_{n};K_{1,m})$. We also present some lemmas and properties as follows.

\begin{Lem}\emph{\cite{20}}\label{CN}
Any two vertices in $AQ_n$ have at most 4 common neighbors for $n\geqslant 3$.
\end{Lem}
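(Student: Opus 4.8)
The plan is to argue by induction on $n$, exploiting the recursive decomposition $AQ_n=AQ_{n-1}^0\otimes AQ_{n-1}^1$. First recall the adjacency structure: every vertex $u=u_nu_{n-1}\ldots u_1$ has exactly the $n$ hypercube-edge neighbours $u^i$ $(1\leqslant i\leqslant n)$ together with the $n-1$ complement-edge neighbours $\overline{u}^i$ $(2\leqslant i\leqslant n)$, and a vertex of $AQ_{n-1}^0$ has precisely two neighbours in $AQ_{n-1}^1$, namely $u^n$ and $\overline{u}^n$. The base case $n=3$ is checked directly from $AQ_3$ (Figure 1), which has only eight vertices. For the inductive step I fix two distinct vertices $u,v$ and count their common neighbours by recording, for each candidate $w$, which copy contains it; this splits the total into a contribution from $AQ_{n-1}^0$ and one from $AQ_{n-1}^1$.

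Suppose first that $u$ and $v$ lie in different copies, say $u=0a\in AQ_{n-1}^0$ and $v=1b\in AQ_{n-1}^1$ with $a,b\in\{0,1\}^{n-1}$. Since the only edges between copies are cross-edges, a common neighbour in $AQ_{n-1}^0$ must be one of the two cross-neighbours of $v$, hence lie in $\{0b,0\overline{b}\}$; symmetrically, a common neighbour in $AQ_{n-1}^1$ must lie in $\{1a,1\overline{a}\}$. Thus there are at most $2+2=4$ common neighbours, and no induction is needed here.

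Next suppose $u=0a$ and $v=0b$ lie in the same copy $AQ_{n-1}^0$. Their common cross-neighbours in $AQ_{n-1}^1$ form $\{1a,1\overline{a}\}\cap\{1b,1\overline{b}\}$, which is empty unless $a=\overline{b}$. If $a\neq\overline{b}$, then all common neighbours lie in $AQ_{n-1}^0$, so by the induction hypothesis there are at most $4$. The delicate subcase, and the main obstacle, is $a=\overline{b}$: here $u$ and $v$ agree on the top coordinate and differ in all remaining $n-1$ coordinates, so inside $AQ_{n-1}^0\cong AQ_{n-1}$ they form a complementary pair, the two cross-neighbours coincide and contribute exactly $2$, and the induction hypothesis alone would give only $2+4=6$. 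To recover the bound $4$ I would prove, by a short direct computation, that two vertices $x$ and $\overline{x}^{\,n-1}$ of $AQ_{n-1}$ differing in all $n-1$ coordinates have exactly two common neighbours when $n-1\geqslant 3$: a common neighbour must differ from $x$ either in a single coordinate or in a bottom block $\{1,\ldots,i\}$, and simultaneously differ from $\overline{x}^{\,n-1}$ in such a way, and matching the two descriptions forces the difference set to be either the single top coordinate $\{n-1\}$ or the bottom block $\{1,\ldots,n-2\}$, yielding exactly the common neighbours $x^{n-1}$ and $\overline{x}^{\,n-2}$. Consequently the complementary subcase gives $2+2=4$ as well, which completes the induction.
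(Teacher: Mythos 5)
Your proof is correct, but there is nothing in the paper to compare it against: Lemma \ref{CN} is stated with a citation to \cite{20}, and no proof of it appears in the paper itself. Your induction on the decomposition $AQ_n=AQ_{n-1}^0\otimes AQ_{n-1}^1$ is sound. The cross-copy case gives at most $2+2=4$ because every vertex has exactly two neighbours ($u^n$ and $\overline{u}^n$) in the opposite copy; the same-copy case with $a\neq\overline{b}$ has no cross common neighbours, so the induction hypothesis applies (this needs $n-1\geqslant 3$, which is why the finite check of $AQ_3$ is the right base case); and you correctly isolated the one delicate configuration, the complementary pair $u=0a$, $v=0\overline{a}$, where the induction hypothesis alone would only give $2+4=6$. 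Your resolution of it is also right: a common neighbour $w$ of $x$ and $\overline{x}^{\,n-1}$ in $AQ_{n-1}$ differs from $\overline{x}^{\,n-1}$ exactly on the complement of the set of coordinates where it differs from $x$; both sets must be a singleton or an initial segment $\{1,\ldots,i\}$ with $i\geqslant 2$; and for $n-1\geqslant 3$ the only complementary pair of admissible sets is $\{n-1\}$ together with $\{1,\ldots,n-2\}$, yielding exactly the two common neighbours $x^{n-1}$ and $\overline{x}^{\,n-2}$, hence $2+2=4$ in total. Two small remarks: the same-copy case with both vertices in $AQ_{n-1}^1$ should at least be mentioned (it is identical by symmetry), and your cross-neighbour analysis in the same-copy case is precisely the content of the paper's Lemma \ref{LRN}, likewise imported from \cite{20}; so your argument in effect re-derives that finer statement and supplements it with the in-copy count for complementary pairs, making self-contained a result that the paper takes from the literature.
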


\begin{Lem}\emph{\cite{20}}\label{LRN}
If $u,w \in V(AQ_{n-1}^i)$, $i\in \{0,1\}$, have a common neighbor in $AQ_{n-1}^{1-i}$, then $w=\overline{u}^{n-1}$ and they have exactly two common neighbors $u^n$ and $\overline{u}^n$ in $AQ_{n-1}^{1-i}$
\end{Lem}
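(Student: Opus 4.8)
The plan is to reduce the statement to an explicit computation on the binary labels, using the symmetry between the two copies to fix $i=0$ without loss of generality. Write $u=0a_{n-1}\ldots a_1$ and $w=0c_{n-1}\ldots c_1$ as vertices of $AQ_{n-1}^0$, and recall from the definition $AQ_n=AQ_{n-1}^0\otimes AQ_{n-1}^1$ that a vertex of $AQ_{n-1}^0$ reaches $AQ_{n-1}^1$ only through its dimension-$n$ cross edges. The first step is therefore to pin down, for each of $u$ and $w$, its full set of neighbors lying in $AQ_{n-1}^1$.

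The key observation is that each vertex of $AQ_{n-1}^0$ has exactly two neighbors in $AQ_{n-1}^1$: the hypercube-edge endpoint $u^n=1a_{n-1}\ldots a_1$ (matching all lower bits, the case $a_i=b_i$) and the complement-edge endpoint $\overline{u}^n=1\overline{a_{n-1}}\ldots\overline{a_1}$ (complementing all lower bits, the case $a_i=\overline{b_i}$). Thus $N_{AQ_n}(u)\cap V(AQ_{n-1}^1)=\{u^n,\overline{u}^n\}$ and likewise $N_{AQ_n}(w)\cap V(AQ_{n-1}^1)=\{w^n,\overline{w}^n\}$, where the lower bits of these four vertices are $a_{n-1}\ldots a_1$, $\overline{a_{n-1}}\ldots\overline{a_1}$, $c_{n-1}\ldots c_1$, and $\overline{c_{n-1}}\ldots\overline{c_1}$ respectively.

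Next I would ask when these two two-element sets intersect, which forces an equality between one of $\{a_{n-1}\ldots a_1,\ \overline{a_{n-1}}\ldots\overline{a_1}\}$ and one of $\{c_{n-1}\ldots c_1,\ \overline{c_{n-1}}\ldots\overline{c_1}\}$. Among the four possible matchings, two impose $a_i=c_i$ for all $i$ and hence $u=w$, which is excluded; the remaining two both impose $c_i=\overline{a_i}$ for every $1\leqslant i\leqslant n-1$, i.e.\ $w=0\overline{a_{n-1}}\ldots\overline{a_1}=\overline{u}^{n-1}$. Substituting this relation back gives $\overline{w}^n=u^n$ and $w^n=\overline{u}^n$, so $u$ and $w$ share precisely the two neighbors $u^n$ and $\overline{u}^n$, completing the argument.

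I do not expect a genuine obstacle, since the claim unwinds to a finite case check on binary strings. The only points demanding care are verifying that the four-way matching is exhaustive and that invoking $u\neq w$ correctly discards the two degenerate cases, so that exactly the single possibility $w=\overline{u}^{n-1}$ survives and yields exactly two—not one—common neighbors.
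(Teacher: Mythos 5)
Your proof is correct: the paper itself gives no proof of this lemma (it is quoted from reference \cite{20}), and your argument is exactly the standard one behind it --- each vertex of $AQ_{n-1}^0$ has precisely the two cross neighbors $u^n$ and $\overline{u}^n$, and intersecting the two pairs forces $w=\overline{u}^{n-1}$, whence the pairs coincide. Your handling of the two degenerate matchings via $u\neq w$ (implicit in the lemma, since the conclusion fails for $u=w$) and the back-substitution giving exactly two common neighbors are both sound.
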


\begin{Lem}\emph{\cite{20}}\label{UUI}
For a hypercube edge $uu^i$, $1\leqslant i\leqslant n$ and $n\geqslant 2$,
\begin{equation*}
\label{}
\ N_{AQ_n}(u)\cap N_{AQ_n}(u^i)=
\left\{ \begin{aligned}
&\{\overline{u}^i, \overline{u}^{i-1}\}, &\mbox{if}~~2\leqslant i\leqslant n,\\
&\{\overline{u}^2, u^{2}\}, &\mbox{if}~~i=1.
\end{aligned}\right.
\end{equation*}
\end{Lem}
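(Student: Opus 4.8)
The plan is to prove both equalities by a direct analysis of the two kinds of edges in $AQ_n$. First I would record the full neighbourhood of an arbitrary vertex $u=u_nu_{n-1}\ldots u_1$: the hypercube neighbours $u^j$ $(1\leqslant j\leqslant n)$, obtained by flipping the single bit $j$, and the complement neighbours $\overline{u}^{\,j}$ $(2\leqslant j\leqslant n)$, obtained by flipping the whole prefix of bits $1,\ldots,j$; recall also $u^1=\overline{u}^{\,1}$, so that $N_{AQ_n}(u)$ has exactly $2n-1$ elements. The useful reformulation is to encode a neighbour $w$ of $u$ by the bitwise difference $d:=w\oplus u$, which must lie in the edge set $\mathcal{E}=\{e_j:1\leqslant j\leqslant n\}\cup\{p_j:2\leqslant j\leqslant n\}$, where $e_j$ is the single-bit string with its $1$ in position $j$ and $p_j=\{1,\ldots,j\}$ is the prefix string (note $p_1=e_1$). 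Since $u^i=u\oplus e_i$, a vertex $w$ is a common neighbour of $u$ and $u^i$ exactly when both $d$ and $d\oplus e_i$ belong to $\mathcal{E}$; the degenerate values $d=0$ and $d=e_i$ are ruled out automatically because then $d\oplus e_i=e_i$ or $0$, and $0\notin\mathcal{E}$.

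The easy direction is to exhibit the claimed common neighbours and check adjacency to both endpoints. For $2\leqslant i\leqslant n$ I would verify, on the difference level, that $d=p_i$ gives $w=\overline{u}^{\,i}$ with $d\oplus e_i=p_{i-1}\in\mathcal{E}$, and that $d=p_{i-1}$ gives $w=\overline{u}^{\,i-1}$ with $d\oplus e_i=p_i\in\mathcal{E}$; equivalently, $\overline{u}^{\,i}=\overline{(u^i)}^{\,i-1}$ and $\overline{u}^{\,i-1}=\overline{(u^i)}^{\,i}$, so both are neighbours of $u^i$ as well. For $i=1$ the same bookkeeping shows that $d=e_2$ yields $u^2$ and $d=p_2$ yields $\overline{u}^{\,2}$. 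In short, adding $e_i$ simply interchanges the two patterns $p_{i-1}$ and $p_i$ when $i\geqslant 2$, and interchanges $e_2$ and $p_2$ when $i=1$, with each pair sitting inside $\mathcal{E}$.

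The main work, and the only place needing care, is the completeness direction: showing no further common neighbours exist. Here I would run through every $d\in\mathcal{E}$ and impose $d\oplus e_i\in\mathcal{E}$, splitting on whether $d$ is a singleton $e_j$ or a prefix $p_j$, and on whether the flipped coordinate $i$ lies inside or outside the support of $d$. Each branch forces the symmetric difference to be either a singleton or an initial segment, which happens only in the configurations $\{i,j\}=\{1,2\}$, $i=j$, or $i=j+1$; tracing these back gives precisely the two listed vertices in each case. I expect this exhaustive case split to be the principal obstacle, chiefly because one must handle the low-index coincidences ($p_1=e_1$, and the merging of cases at $i=1,2$) correctly to confirm that exactly two solutions survive. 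As a fallback I would keep an induction on $n$ via $AQ_n=AQ_{n-1}^0\otimes AQ_{n-1}^1$, using Lemma~\ref{LRN} to control cross-copy common neighbours when $i=n$; but the direct difference-set count is more transparent and needs no separate base cases.
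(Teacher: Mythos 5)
Your proposal is correct, but note that the paper itself gives no proof of this statement: it is quoted verbatim from reference \cite{20} (Ma, Liu and Xu), so there is no in-paper argument to compare against. Your XOR encoding — identifying a neighbour $w$ of $u$ with the difference $d=w\oplus u$ ranging over $\mathcal{E}=\{e_j:1\leqslant j\leqslant n\}\cup\{p_j:2\leqslant j\leqslant n\}$, and characterizing common neighbours of $u$ and $u^i$ by the condition $d\in\mathcal{E}$ and $d\oplus e_i\in\mathcal{E}$ — is sound, and your case split (singleton $d=e_j$ versus prefix $d=p_j$, with $i$ inside or outside the support) does exhaust all possibilities; the surviving configurations $\{i,j\}=\{1,2\}$, $i=j$, $i=j+1$ trace back exactly to the two listed vertices, including the delicate identifications $p_1=e_1$ and $\overline{u}^{\,1}=u^{1}$ at $i=1,2$. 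It is worth comparing your method with how the paper proves its own closely related statements, Lemmas \ref{UIUJ}, \ref{UCIUCJ} and \ref{UIUCJ}: there the authors split each neighbourhood into hypercube and complement parts ($A_1,A_2,B_1,B_2$), exhibit enough common neighbours by hand, and then invoke the upper bound of Lemma \ref{CN} (at most four common neighbours) to stop the count early, with a direct exclusion argument for the converse. Your difference-set bookkeeping buys two things over that style: it is fully self-contained (no appeal to the common-neighbour bound of Lemma \ref{CN}, which in \cite{20} is itself derived alongside this lemma, so avoiding it sidesteps any circularity concern), and the completeness direction reduces to a transparent combinatorial question about when a symmetric difference of a singleton and a prefix is again a singleton or a prefix. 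The cost is only that the exhaustive branch check must be written out carefully at the low indices, which you have flagged and handled correctly.
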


\begin{Lem}\emph{\cite{20}}\label{UUCI}
 For a complement edge $u\overline{u}^i$, $2\leqslant i\leqslant n$ and $n\geqslant 2$,
\begin{equation*}
\label{}
\ N_{AQ_n}(u)\cap N_{AQ_n}(\overline{u}^i)=
\left\{ \begin{aligned}
&\{u^i, u^{i+1}, \overline{u}^{i+1},\overline{u}^{i-1}\}, &\mbox{if}~~ 2\leqslant i\leqslant n-1,\\
&\{\overline{u}^{n-1}, u^{n}\}, &\mbox{if}~~i=n.
\end{aligned}\right.
\end{equation*}
\end{Lem}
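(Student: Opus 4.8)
The plan is to argue directly from the recursive definition $AQ_n=AQ_{n-1}^{0}\otimes AQ_{n-1}^{1}$, combining an explicit description of a single vertex's neighbourhood with the counting bound of Lemma \ref{CN}. First I would record that, unwinding the recursion, the top level attaches to $u$ exactly the two cross edges $uu^{n}$ and $u\overline{u}^{n}$ while every other incident edge is inherited from the copy containing $u$; by induction this gives
\[
N_{AQ_n}(u)=\{u^{j}:1\leqslant j\leqslant n\}\cup\{\overline{u}^{j}:2\leqslant j\leqslant n\},
\]
a set of $2n-1$ vertices (recall $u^{1}=\overline{u}^{1}$). The whole lemma is then a computation of which neighbours of $u$ are also neighbours of $w:=\overline{u}^{i}$.

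Reading each neighbour of $w$ as a flip pattern applied to $u$, I would establish the four elementary bit identities
\[
w^{i}=\overline{u}^{i-1},\qquad w^{i+1}=\overline{u}^{i+1},\qquad \overline{w}^{i-1}=u^{i},\qquad \overline{w}^{i+1}=u^{i+1},
\]
each valid whenever its indices lie in $\{1,\ldots,n\}$ (the $i=2$ instance of $\overline{w}^{i-1}$ uses $\overline{w}^{1}=w^{1}$). Every identity exhibits one vertex lying simultaneously in $N_{AQ_n}(u)$ and $N_{AQ_n}(w)$, so the existence half of the statement is immediate; the substance is showing there are no further common neighbours.

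For $2\leqslant i\leqslant n-1$ all four indices are in range, so $u^{i},u^{i+1},\overline{u}^{i+1},\overline{u}^{i-1}$ are common neighbours of $u$ and $\overline{u}^{i}$; their flip patterns $\{i\},\{i+1\},\{1,\ldots,i+1\},\{1,\ldots,i-1\}$ are pairwise distinct, so the intersection has size at least $4$. Since $n\geqslant 3$ throughout this range, Lemma \ref{CN} supplies the reverse bound, forcing equality and hence $N_{AQ_n}(u)\cap N_{AQ_n}(\overline{u}^{i})=\{u^{i},u^{i+1},\overline{u}^{i+1},\overline{u}^{i-1}\}$.

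For $i=n$ the indices $i+1$ leave $\{1,\ldots,n\}$, so only $u^{n}$ and $\overline{u}^{n-1}$ survive from the identities, and the bound of Lemma \ref{CN} is no longer tight; ruling out a third or fourth common neighbour is the main obstacle, since counting alone cannot deliver the value $2$. Here I would use the splitting: as $w=\overline{u}^{n}$ differs from $u$ in the top bit, $u$ and $w$ sit in opposite subcubes, and the cross edges join any vertex $y$ to $y^{n}$ and $\overline{y}^{n}$. A common neighbour $x$ lying in the subcube of $u$ must reach $w$ by a cross edge, so $w\in\{x^{n},\overline{x}^{n}\}$, which forces $x\in\{\overline{u}^{n-1},u\}$ and, discarding $x=u$, gives $x=\overline{u}^{n-1}$. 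Symmetrically a common neighbour in the subcube of $w$ satisfies $u\in\{x^{n},\overline{x}^{n}\}$, yielding $x=u^{n}$ after discarding $x=w$. Hence $N_{AQ_n}(u)\cap N_{AQ_n}(\overline{u}^{n})=\{\overline{u}^{n-1},u^{n}\}$, which completes the proof.
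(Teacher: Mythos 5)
Your proof is correct, but note that the paper itself gives no argument for this lemma at all: it is imported verbatim from Ma--Liu--Xu \cite{20}, so there is no in-paper proof to match and your write-up is a genuinely self-contained derivation. The two halves of your argument are both sound and use exactly the right tool in each regime. For $2\leqslant i\leqslant n-1$ the four bit identities $w^{i}=\overline{u}^{i-1}$, $w^{i+1}=\overline{u}^{i+1}$, $\overline{w}^{i-1}=u^{i}$, $\overline{w}^{i+1}=u^{i+1}$ (with $w=\overline{u}^{i}$) check out, the four flip patterns are pairwise distinct, and invoking Lemma \ref{CN} to convert ``at least four'' into ``exactly four'' is legitimate since $n\geqslant 3$ whenever that range is nonempty; this neatly sidesteps the tedious case analysis of all $4n-2$ candidate neighbours that a brute-force computation (as in \cite{20}, and as the paper itself carries out for Lemmas \ref{UIUJ}--\ref{UIUCJ}) would require. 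For $i=n$, where counting cannot help because the answer is $2$, your switch to the decomposition $AQ_n=AQ_{n-1}^{0}\otimes AQ_{n-1}^{1}$ is the right move: since $u$ and $\overline{u}^{n}$ lie in opposite halves, any common neighbour is joined to one of them by a cross edge, and the fact that each vertex $y$ has exactly the two cross neighbours $y^{n}$ and $\overline{y}^{n}$ pins the common neighbours down to $\overline{u}^{n-1}$ and $u^{n}$ after discarding $u$ and $w$ themselves; this is the same structural fact underlying the paper's Lemma \ref{LRN}, though that lemma (two vertices in the \emph{same} half) does not apply directly here, so your direct argument is needed. Your handling of the degenerate index, $\overline{w}^{1}=w^{1}$ at $i=2$ and $u^{1}=\overline{u}^{1}$, is also correct, and the $i=n$ argument even covers $n=2$, where Lemma \ref{CN} is unavailable.
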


The following results(Lemmas 4.5-4.7) can give sufficient and necessary condition that any two neighbors of given vertex in $AQ_n$ have exactly four common neighbors, which play an important role in the following discussions. These results are taken from the proof of Lemma 2.8 in \cite {22}, but there is no detailed proof. For convenience, we give direct proofs here.
We set $A_1=\{x^k|1\leqslant k\leqslant n\}$, $A_2=\{\overline{x}^k|2\leqslant k\leqslant n\}$, $B_1=\{y^{t}|1\leqslant t\leqslant n\}$ and $B_2=\{\overline{y}^t|2\leqslant t\leqslant n\}$ for $\{x,y\}\subset V(AQ_n)$. Then $N_{AQ_n}(x)=A_1\cup A_2$ and $N_{AQ_n}(y)=B_1\cup B_2$.
\begin{Lem}\label{UIUJ}
For two hypercube edges $uu^i$ and $uu^j$, $1\leqslant i<j\leqslant n$ and $n\geqslant 3$, $|N_{AQ_n}(u^i)\cap N_{AQ_n}(u^j)|=4$ if and only if
$i\geqslant 2$ and $j=i+1$, or $i=1$ and $j=2,3$. Further, we have
\begin{equation*}
\label{}
\ N_{AQ_n}(u^i)\cap N_{AQ_n}(u^j)=
\left\{ \begin{aligned}
&\{u, u^{1,2}, u^{1,3}, u^{2,3}\}, &\mbox{if}~~i=1~and~j=2, 3;\\
&\{u, u^{i,j}, \overline{u}^{i},\overline{(u^i)}^{j}\}, &\mbox{if}~~i\geqslant 2 ~and~j=i+1;\\
&\{u, u^{i,j}\}, &\mbox{otherwise}.
\end{aligned}\right.
\end{equation*}
\end{Lem}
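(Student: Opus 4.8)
The plan is to encode each vertex by the set of coordinates in which it differs from $u$, which turns the two edge types of $AQ_n$ into a single combinatorial condition. For a vertex $v$ write $D(v)\subseteq\{1,\dots,n\}$ for the set of positions where $v$ and $u$ disagree, so that $D(u^i)=\{i\}$ and $D(u^j)=\{j\}$. Since the positions where two vertices $w,z$ disagree form exactly $D(w)\triangle D(z)$, reading off the definition of $AQ_n$ shows that $w$ and $z$ are adjacent if and only if $D(w)\triangle D(z)$ is either a singleton $\{p\}$ (a hypercube edge) or an initial segment $\{1,2,\dots,\ell\}$ with $2\leqslant\ell\leqslant n$ (a complement edge). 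Let $\mathcal{E}$ denote this family of admissible difference patterns; note that adjacency depends only on the pattern, not on $w,z$ themselves.

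First I would record the reduction. Because $D(v)\triangle D(u^i)=D(v)\triangle\{i\}$, a vertex $v$ is a common neighbour of $u^i$ and $u^j$ precisely when both $E_1:=D(v)\triangle\{i\}$ and $E_2:=D(v)\triangle\{j\}$ lie in $\mathcal{E}$. Here $E_1\triangle E_2=\{i\}\triangle\{j\}=\{i,j\}$, and $v$ is recovered from $E_1$ via $D(v)=E_1\triangle\{i\}$, so the common neighbours of $u^i$ and $u^j$ are in bijection with the ordered pairs $(E_1,E_2)\in\mathcal{E}\times\mathcal{E}$ with $E_1\triangle E_2=\{i,j\}$. This is exactly the four-way intersection $(A_1\cup A_2)\cap(B_1\cup B_2)$ for $x=u^i$, $y=u^j$, reorganised so that all of the bookkeeping is absorbed into one symmetric-difference equation.

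Next I would enumerate these pairs by the types of $E_1,E_2$, using that $\{i,j\}$ has size $2$. If both are singletons, the only solutions are $(\{i\},\{j\})$ and $(\{j\},\{i\})$, giving the two common neighbours $u$ and $u^{i,j}$, present for every $i<j$. If exactly one is an initial segment, a short size count (the difference has size $\ell-1$ or $\ell+1$) forces that segment to be $\{1,2,3\}$, which meets $\{i,j\}$ only for $(i,j)\in\{(1,2),(1,3),(2,3)\}$ and then contributes two further neighbours drawn from $u^{1,2},u^{1,3},u^{2,3}$. If both are initial segments $\{1,\dots,a\}$ and $\{1,\dots,b\}$ with $a<b$, their symmetric difference is the interval $\{a+1,\dots,b\}$, which equals $\{i,j\}$ only when $j=i+1$, $a=i-1\geqslant2$ and $b=i+1$; this requires $i\geqslant3$ and yields the two neighbours $\overline{u}^{i}$ and $\overline{(u^i)}^{j}$. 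Collecting the cases shows $|N_{AQ_n}(u^i)\cap N_{AQ_n}(u^j)|=4$ exactly when $i\geqslant2,\ j=i+1$ or $i=1,\ j\in\{2,3\}$, and produces the stated sets once one checks the listed vertices are distinct and identifies the boundary coincidences $\overline{u}^{2}=u^{1,2}$ and $\overline{(u^2)}^{3}=u^{1,3}$.

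The main obstacle is precisely this boundary bookkeeping at small indices: the constraint $\ell\geqslant2$ in $\mathcal{E}$ makes $\{1\}$ behave as a singleton rather than a degenerate segment, so the transition between the segment--singleton and segment--segment counts occurs at $i=2$. One must check carefully that $(i,j)=(2,3)$ belongs to the family $j=i+1$ yet is recovered from the segment--singleton count, while still producing the same four vertices as the general pattern for $i\geqslant3$; and that in every excluded case no pair of either mixed type survives, leaving only $\{u,u^{i,j}\}$. The remainder is a routine size count on symmetric differences.
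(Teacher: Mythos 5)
Your proof is correct, but it takes a genuinely different route from the paper's. The paper splits the neighbourhoods of $x=u^i$ and $y=u^j$ into $A_1,A_2,B_1,B_2$ and works with the four pairwise intersections: in the positive cases it exhibits four explicit common neighbours and then invokes Lemma~\ref{CN} (any two vertices of $AQ_n$ have at most $4$ common neighbours) to conclude that these are all of them, while for the converse it shows by case-by-case position arguments that the three mixed intersections $A_1\cap B_2$, $A_2\cap B_1$, $A_2\cap B_2$ are empty, leaving only $A_1\cap B_1=\{u,u^{i,j}\}$. You instead absorb both edge types into a single family $\mathcal{E}$ of admissible difference patterns (singletons and initial segments of length at least $2$) and reduce the entire lemma to enumerating solutions of $E_1\triangle E_2=\{i,j\}$ in $\mathcal{E}\times\mathcal{E}$; the trichotomy singleton/singleton, singleton/segment, segment/segment then delivers both directions of the equivalence and the exact common-neighbour sets in one sweep. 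Your handling of the boundary at small indices is the one place where care is genuinely required --- $\{1\}$ counts as a singleton rather than a degenerate segment, so the case $(i,j)=(2,3)$ arises from the mixed count and agrees with the general pattern only via the identifications $\overline{u}^{2}=u^{1,2}$ and $\overline{(u^2)}^{3}=u^{1,3}$ --- and you treat it correctly. What your approach buys: it is self-contained (Lemma~\ref{CN} is never needed, since the bound of four falls out of the enumeration), it proves the ``if'' and ``only if'' directions simultaneously rather than by separate membership and emptiness checks, and the identical enumeration applied to the patterns $\{1,\dots,i\},\{1,\dots,j\}$ or $\{i\},\{1,\dots,j\}$ would immediately yield Lemmas~\ref{UCIUCJ} and \ref{UIUCJ} as well. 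What the paper's approach buys: with Lemma~\ref{CN} already on hand, the positive direction becomes very short, since one only needs to name four common neighbours rather than rule out any further ones.
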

\begin{proof}
Let $x=u^i$ and $y=u^j$, $1\leqslant i<j\leqslant n.$
It is easy to find that $ A_1\cap B_1=\{u,u^{i,j}\}$.
If $i=1$ and $j=2,3$, then $u^{1,3}\in A_1\cap B_2$ for $j=2$, $u^{1,2}\in A_1\cap B_2$ for $j=3$ and $u^{2,3}\in A_2\cap B_1$ for $j=2,3$. Thus $N_{AQ_n}(u^1)\cap N_{AQ_n}(u^j)=\{u,u^{1,2}, u^{1,3}, u^{2,3}\}$ for $j=2,3$ by Lemma \ref {CN}.
If $i\geqslant 2$ and $j=i+1$,
then $\overline{u}^i \in A_1\cap B_2$, $\overline{(u^i)}^j \in A_2\cap B_1$ for $i=2$ and $\{\overline{u}^i, \overline{(u^i)}^j\} \subseteq A_2\cap B_2$ for $i>2$. Thus $N_{AQ_n}(u^i)\cap N_{AQ_n}(u^j)=\{u,u^{i,j}, \overline{u}^i, \overline{(u^i)}^j\}$ for $i\geqslant 2$ and $j=i+1$ by Lemma \ref {CN}.

Conversely, suppose $i=1$ and $j\geqslant 4$, or $i\geqslant 2$ and $j\geqslant i+2$. For convenience, we set $p\in A_1$, $p'\in A_2$, $q\in B_1$ and $q'\in B_2$, $p_i$ denotes the $i$th position of $p$. We have $A_1\cap B_2=\emptyset$ since $q'$ has at least 3 positions different to $u$ and $p$ has at most 2 positions different to $u$. If $p'$ has two positions different to $u$, then $p'\in\{u^{2,3}, u^{1,3}, u^{1,2}\}$. We find $A_2\cap B_1=\emptyset$ since $\{u^{2,3}, u^{1,3}, u^{1,2}\}\cap B_1=\emptyset$. 
Now, we consider $A_2\cap B_2$.
For $i=1$ and $j\geqslant 4$, we have $A_2\cap B_2=\emptyset$ since $p'_1\neq q'_1$. For $i\geqslant 2$ and $j\geqslant i+2$, suppose $A_2\cap B_2\neq \emptyset$, that is $\overline{(u^i)}^{k'}=\overline{(u^j)}^{t'}$ for some $k', t'$, then $\overline{u}^{t'}=(\overline{u}^{k'})^{i,j}$. It is impossible for $j\geqslant i+2$. Therefore, we have $N_{AQ_n}(u^i)\cap N_{AQ_n}(u^j)=A_1\cap B_1=\{u,u^{i,j}\}$, a contradiction.
\end{proof}

\begin{Lem}\label{UCIUCJ}
For two complement edges $u\overline{u}^i$ and $u\overline{u}^j$, $2\leqslant i<j\leqslant n$ and $n\geqslant 3$, $|N_{AQ_n}(\overline{u}^i)\cap N_{AQ_n}(\overline{u}^j)|=4$ if and only if $j=i+2$. Further, we have
 \begin{equation*} \label{}
 \ N_{AQ_n}(\overline{u}^i)\cap N_{AQ_n}(\overline{u}^j)=
 \left\{ \begin{aligned}
 &\{u, \overline{(\overline{u}^i)}^{j}, \overline{u}^{i+1},(\overline{u}^i)^{j}\}, &\mbox{if}~~j=i+2,\\
 &\{u, \overline{(\overline{u}^i)}^{j}\}, &\mbox{if}~~j=i+1~or~j>i+2.\\ \end{aligned}\right.
 \end{equation*}
 \end{Lem}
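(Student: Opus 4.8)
The goal is to characterize when two complement-edge neighbors $\overline{u}^i$ and $\overline{u}^j$ of a common vertex $u$ have exactly four common neighbors. The plan is to mirror exactly the computation carried out in Lemma \ref{UIUJ}, working with the explicit neighborhood decomposition. Setting $x=\overline{u}^i$ and $y=\overline{u}^j$, I write $N_{AQ_n}(x)=A_1\cup A_2$ and $N_{AQ_n}(y)=B_1\cup B_2$ with $A_1=\{x^k\}$, $A_2=\{\overline{x}^k\}$, $B_1=\{y^t\}$, $B_2=\{\overline{y}^t\}$ as fixed before the lemmas. Since Lemma \ref{CN} caps the number of common neighbors at $4$, it suffices to exhibit four explicit common neighbors in the case $j=i+2$ and to show the intersection has size $2$ otherwise; by \emph{direct} examination of $A_1\cap B_1$, $A_1\cap B_2$, $A_2\cap B_1$, $A_2\cap B_2$.

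First I would compute $A_1\cap B_1$: a string of the form $(\overline{u}^i)^k$ equals $(\overline{u}^j)^t$ only when the two hypercube flips reconcile the complement patterns, and one checks this forces the single candidate $\overline{(\overline{u}^i)}^{\,j}$ together with $u$ itself. Indeed $u$ is always a common neighbor (it is joined to both $\overline{u}^i$ and $\overline{u}^j$ by complement edges), and $\overline{(\overline{u}^i)}^{\,j}$ is the natural ``second'' common neighbor analogous to $u^{i,j}$ in the previous lemma. This already accounts for the two guaranteed common neighbors appearing in both branches of the conclusion. The crux is then to determine, as a function of the gap $j-i$, how many additional common neighbors arise from the cross terms $A_1\cap B_2$, $A_2\cap B_1$, $A_2\cap B_2$.

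Next I would show that the remaining two common neighbors $\overline{u}^{\,i+1}$ and $(\overline{u}^i)^{j}$ appear precisely when $j=i+2$. To do this I would verify the membership of these two vertices in the appropriate cross-intersections when $j=i+2$ (for instance $\overline{u}^{\,i+1}$ lies in the intersection because flipping position $i+1$ of $u$ can be reached both as a complement-type neighbor of $\overline{u}^i$ and as one of $\overline{u}^j$ when the indices differ by exactly two, and similarly for $(\overline{u}^i)^j$). Conversely, for $j=i+1$ or $j>i+2$ I would argue each cross-intersection beyond the two forced elements is empty, using the same position-counting principle as in Lemma \ref{UIUJ}: an element of $A_2\cap B_2$ would require $\overline{(\overline{u}^i)}^{k'}=\overline{(\overline{u}^j)}^{t'}$, which translates into a hypercube relation between two already-complemented strings that is unsatisfiable unless the gap is exactly $2$, and the terms $A_1\cap B_2$, $A_2\cap B_1$ are ruled out by comparing the number and location of differing bit positions relative to $u$.

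I expect the main obstacle to be the careful bookkeeping in the boundary and near-boundary index ranges, where the piecewise definition of $N_{AQ_n}$ in Lemma \ref{UUCI} (the split at $i=n$) interacts with the requirement $2\leqslant i<j\leqslant n$; in particular one must confirm that when $j=i+2$ the two extra vertices $\overline{u}^{\,i+1}$ and $(\overline{u}^i)^{j}$ are genuinely distinct from $u$ and $\overline{(\overline{u}^i)}^{\,j}$ and from each other, and that no degeneracy collapses the count below four. Once the four explicit vertices are validated for $j=i+2$ and the emptiness of the extra cross-terms is established for the other gaps, the result follows immediately from Lemma \ref{CN}, exactly paralleling the structure of the proof of Lemma \ref{UIUJ}.
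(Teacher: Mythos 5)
Your high-level plan coincides with the paper's proof: set $x=\overline{u}^i$, $y=\overline{u}^j$, split $N_{AQ_n}(x)=A_1\cup A_2$ and $N_{AQ_n}(y)=B_1\cup B_2$, determine the four pairwise intersections, and invoke Lemma \ref{CN} to cap the count at four. However, your cell-by-cell claims have the roles of $A_1\cap B_1$ and $A_2\cap B_2$ exactly interchanged, and since this bookkeeping \emph{is} the entire content of the lemma, the computation you outline would fail as stated.

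In detail: $u$ does not lie in $A_1\cap B_1$. Because $i\geqslant 2$, the vertex $u$ differs from $x=\overline{u}^i$ in all of the positions $1,\dots,i$, so it is not a hypercube neighbor $x^k$ of $x$; it is the complement neighbor $\overline{x}^i$ (your own parenthetical remark that $u$ is joined to $x$ and $y$ by complement edges already contradicts placing it in $A_1\cap B_1$). The correct accounting, which is what the paper verifies, is: (a) $A_2\cap B_2=\{u,\overline{(\overline{u}^i)}^{j}\}$ for \emph{every} $2\leqslant i<j\leqslant n$, obtained from $(k',t')=(i,j)$ and $(k',t')=(j,i)$, so your claim that this cell is ``unsatisfiable unless the gap is exactly $2$'' is false; (b) the two conditional vertices lie in $A_1\cap B_1$, not in any cross term: when $j=i+2$ one has $\overline{u}^{i+1}=x^{i+1}=y^{i+2}$ and $(\overline{u}^i)^{j}=x^{i+2}=y^{i+1}$, and $A_1\cap B_1=\emptyset$ for all other gaps, because $x^k=y^t$ forces $x$ and $y$ to differ in exactly the two positions $k$ and $t$, whereas $x$ and $y$ differ in the $j-i$ positions $i+1,\dots,j$ (this also disposes of $j=i+1$, where they differ in only one position); in particular $\overline{u}^{i+1}$ is a hypercube neighbor of $\overline{u}^i$, not a ``complement-type neighbor'' as you assert, and no $\overline{x}^k$ equals it; and (c) the cells that are empty in every case are the cross terms $A_1\cap B_2$ and $A_2\cap B_1$, ruled out by comparing the first two bit positions against $u$, as in Lemma \ref{UIUJ}. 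So you name the right four vertices and the right iff condition, but each of the three structural claims you would set out to prove is false as written; the argument only becomes a proof after swapping which intersections yield the forced pair and which yield the conditional pair.
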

\begin{proof} Let $x=\overline{u}^i$ and $y=\overline{u}^j$, $2\leqslant i<j\leqslant n$.
We have $A_2\cap B_2=\{u,\overline{(\overline{u}^i)}^j\}$. If $j=i+2$, then $\{\overline{u}^{i+1}, (\overline{u}^i)^{j}\}\subseteq A_1\cap B_1$ for $k=i+1$, $t=i+2$ and $k=i+2$, $t=i+1$.
Thus $N_{AQ_n}(\overline{u}^i)\cap N_{AQ_n}(\overline{u}^{j})=\{u,\overline{(\overline{u}^i)}^j, \overline{u}^{i+1}, (\overline{u}^i)^{j}\}$ by Lemma \ref{CN}.

Conversely, suppose $j>i$ and $j\neq i+2$. For convenience, we set $p\in A_1$, $p'\in A_2$, $q\in B_1$ and $q'\in B_2$, $p_i$ denotes the $i$th position of $p$. Then $A_1\cap B_2=\emptyset$ since $p_1\neq q'_1$ or $p_2\neq q'_2$. Similarly, $A_2\cap B_1=\emptyset$ since $p'_1\neq q_1$ or $p'_2\neq q_2$. Now, we consider $A_1\cap B_1$. Suppose $A_1\cap B_1\neq \emptyset$, that is $(\overline{u}^i)^k=(\overline{u}^j)^t$ for some $k, t$, then $\overline{u}^{j}=(\overline{u}^{i})^{k,t}$. It is impossible for $j>i$ and $j\neq i+2$.
Therefore, we have $N_{AQ_n}(\overline{u}^i)\cap N_{AQ_n}(\overline{u}^{j})=A_2\cap B_2=\{u, \overline{(\overline{u}^i)}^j\}$, a contradiction.
\end{proof}

\begin{Lem}\label{UIUCJ}
For a hypercube edge and a complement edge $uu^i$ and $u\overline{u}^j$, respectively, $1\leqslant i\leqslant n$, $2\leqslant j\leqslant n$ and $n\geqslant 3$, $|N_{AQ_n}(u^i)\cap N_{AQ_n}(\overline{u}^j)|=4$ if and only if one of the following statements hold: $(i)$ $i=1$ and $j=3$; $(ii)$ $i=j-1$, $3\leqslant j\leqslant n$, or $i=j$, $3\leqslant j\leqslant n$; $(iii)$ $i=j+1$, $2\leqslant j\leqslant n-2$, or $i=j+2$, $2\leqslant j\leqslant n-2$.
    Further, we have
$$ \ N_{AQ_n}(u^i)\cap N_{AQ_n}(\overline{u}^j)=
\left
\{ \begin{array}{ll}
 \{u, u^{1,3}, u^{1,2},u^{2,3}\}, &\mbox{if}~i=1,j=3,\\
 \{u, u^{i,i-1}, \overline{u}^{i-1},(\overline{u}^i)^{i-1}\}, &\mbox{if}~i=j,3\leqslant j\leqslant n,\\
 \{u, u^{i,i+1}, \overline{u}^{i},\overline{(u^i)}^{i+1}\}, &\mbox{if}~i=j+1,2\leqslant j\leqslant n-2,\\ &\mbox{or}~i=j-1,3\leqslant j\leqslant n,\\
 \{u, u^{i,i-1}, \overline{u}^{i-1},\overline{(u^i)}^{i-2}\}, &\mbox{if}~i=j+2, 2\leqslant j\leqslant n-2.\\
  \{u, \overline{(u^i)}^{j}\}, &otherwise.\\
 \end{array}\right.
 $$
 \end{Lem}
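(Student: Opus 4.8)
The plan is to mirror the arguments of Lemmas \ref{UIUJ} and \ref{UCIUCJ}: set $x=u^i$, $y=\overline{u}^j$, and decompose the common neighborhood as
$$N_{AQ_n}(x)\cap N_{AQ_n}(y)=(A_1\cap B_1)\cup(A_1\cap B_2)\cup(A_2\cap B_1)\cup(A_2\cap B_2),$$
analysing each of the four pieces separately. The crucial preliminary observation is that two common neighbors are present regardless of $i,j$: the vertex $u=(u^i)^i=\overline{(\overline{u}^j)}^{\,j}$ lies in $A_1\cap B_2$, and the vertex $\overline{(u^i)}^{\,j}=(\overline{u}^j)^i$ lies in $A_2\cap B_1$ (one checks that $\overline{(u^i)}^{\,j}$ differs from $\overline{u}^j$ only in position $i$). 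Since Lemma \ref{CN} guarantees $|N_{AQ_n}(x)\cap N_{AQ_n}(y)|\leqslant 4$, proving the lemma reduces to deciding exactly when two \emph{further} common neighbors occur, and to exhibiting them.

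For the sufficiency direction I would treat the listed cases one at a time. In each case I would write down the two additional vertices explicitly; for instance, when $i=j$ they are $u^{i,i-1}\in A_1\cap B_2$ and $(\overline{u}^i)^{i-1}\in A_2\cap B_1$, while when $i=j+1$ they are $u^{i,i+1}\in A_1\cap B_2$ and $\overline{(u^i)}^{i+1}\in A_2\cap B_1$. I would verify membership by comparing, for each candidate, the set of bit positions in which it differs from $u$ against the flip pattern defining a neighbor of $x$ and of $y$. Together with the two universal neighbors this produces four distinct common neighbors, whence Lemma \ref{CN} forces equality, and this simultaneously yields the displayed formula for the intersection.

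For the necessity direction I would show that for every pair $(i,j)$ outside the listed families the four pairwise intersections collapse to $\{u,\overline{(u^i)}^{\,j}\}$. The key bookkeeping is that a vertex is determined by its difference set $D(w)=\{k:w_k\neq u_k\}$, and each block imposes a rigid shape on $D(w)$: elements of $A_1$ satisfy $D(w)=\{i,k\}$, elements of $A_2$ satisfy $D(w)=\{1,\dots,k\}\triangle\{i\}$, elements of $B_1$ satisfy $D(w)=\{1,\dots,j\}\triangle\{t\}$, and elements of $B_2$ have the contiguous form $\{t+1,\dots,j\}$ or $\{j+1,\dots,t\}$. I would equate these shapes pairwise and read off the forced relations between $i$ and $j$; a cardinality comparison kills most combinations (for example $A_1\cap B_1\neq\emptyset$ forces $j=3$, since $|D(w)|$ must equal both $2$ and $j\pm1$), and the surviving equalities reproduce exactly the conditions (i)--(iii). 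I expect the main difficulty to lie precisely here: the prefix-flip structure of the complement edges interacts delicately with the single-flip structure of the hypercube edges, so the analysis branches according to the relative order of $i$ and $j$ and, more annoyingly, according to boundary effects at the extreme indices. These boundary constraints are what pin down the ranges in the statement; for example $u^{i,i+1}$ and $\overline{(u^i)}^{i+1}$ exist only when $i+1\leqslant n$, which is why the cases $i=j+1$ and $i=j+2$ are restricted to $j\leqslant n-2$, and the asymmetry between $j=2$ and $j=3$ when $i=1$ stems from the same edge effects. Keeping track of all such degenerate indices without omission is the real work of the proof.
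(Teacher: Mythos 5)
Your proposal is correct and follows essentially the same route as the paper's proof: the same decomposition of the common neighborhood into $A_1\cap B_1$, $A_1\cap B_2$, $A_2\cap B_1$, $A_2\cap B_2$, the same use of Lemma \ref{CN} to cap the count at four after exhibiting the extra common neighbors, and the same difference-set bookkeeping for the converse (the paper, too, reduces $A_1\cap B_2$ to $\{u\}$ and $A_2\cap B_1$ to $\{\overline{(u^i)}^{j}\}$ in the non-listed cases, which is exactly your ``universal pair''). The only sketch-level imprecision is that for the boundary value $j=3$ your named extra vertices land in $A_1\cap B_1$ rather than in $A_1\cap B_2$ and $A_2\cap B_1$ (the paper splits $j=3$ from $j>3$ for this reason), but since your verification method would classify them correctly and they remain common neighbors, this does not affect the argument.
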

 \begin{proof}Let $x=u^{i}$, $1\leqslant i\leqslant n$ and $y=\overline{u}^j$, $2\leqslant j\leqslant n.$
$(i)$ If $i=1$ and $j=3$, then $u\in A_1\cap B_2$, $\{u^{1,2}, u^{1,3}\}\subseteq A_1\cap B_1$ and $u^{2,3}\in A_2\cap B_2$. Thus $N_{AQ_n}(u^1)\cap N_{AQ_n}(\overline{u}^3)=\{u,u^{1,2}, u^{1,3}, u^{2,3}\}$ by Lemma \ref{CN}. $(ii)$ If $i=j-1$, $3\leqslant j\leqslant n$, then $u\in A_1\cap B_2$, $\overline{(u^i)}^{i+1}\in A_2\cap B_1$. For $j=3$, $\{u^{i, i+1}, \overline{u}^i\}\subseteq A_1\cap B_1$. For $j>3$, $u^{i, i+1}\in A_1\cap B_2$, $\overline{u}^i\in A_2\cap B_1$.
Then $N_{AQ_n}(u^i)\cap N_{AQ_n}(\overline{u}^{i+1})=\{u, u^{i,i+1},\overline{u}^{i}, \overline{(u^i)}^{i+1}\}$ by Lemma \ref{CN}. If $i=j$, $3\leqslant j\leqslant n$, then $u\in A_1\cap B_2$, $\overline{u}^{i-1}\in A_2\cap B_1$. For $j=3$, $\{u^{i, i-1}, (\overline{u}^i)^{i-1}\} \subseteq A_1\cap B_1$. For $j>3$, $u^{i, i-1}\in A_1\cap B_2$ and $(\overline{u}^i)^{i-1}\in A_2\cap B_1$. Then $N_{AQ_n}(u^i)\cap N_{AQ_n}(\overline{u}^{i})=\{u, u^{i,i-1},\overline{u}^{i-1}, (\overline{u}^i)^{i-1}\}$ by Lemma \ref{CN}. $(iii)$ If $i=j+1$, $2\leqslant j\leqslant n-2$, then $\{u, u^{i, i+1}\}\subseteq A_1\cap B_2$, $\{\overline{u}^i, \overline{(u^i)}^{i+1}\}\subseteq A_2\cap B_1$. Thus $N_{AQ_n}(u^i)\cap N_{AQ_n}(\overline{u}^{i-1})=\{u, u^{i,i+1},\overline{u}^{i}, \overline{(u^i)}^{i+1}\}$ by Lemma \ref{CN}. If $i=j+2$, $2\leqslant j\leqslant n-2$, then $\{u, u^{i, i-1}\}\subseteq A_1\cap B_2$, $\{\overline{u}^{i-1}, \overline{(u^i)}^{i-2}\}\subseteq A_2\cap B_1$. Thus $N_{AQ_n}(u^i)\cap N_{AQ_n}(\overline{u}^{i-2})=\{u, u^{i,i-1},\overline{u}^{i-1}, \overline{(u^i)}^{i-2}\}$ by Lemma \ref{CN}.

Conversely, suppose $i=1$, $j=2$ or $i=j=2$ or $i=n$, $j=n-1$ or $i\leqslant j-2$ or $i\geqslant j+3$. For $i=1$, $j=2$ or $i=j=2$ or $i=n$, $j=n-1$, it is easy to verify that $N_{AQ_n}(u^i)\cap N_{AQ_n}(\overline{u}^j)=\{u,\overline{(u^i)}^j\}$.
 Then we consider $i\leqslant j-2$, $4\leqslant j\leqslant n$, or $i\geqslant j+3$, $2\leqslant j\leqslant n-3$.  For convenience, We set $p\in A_1$, $p'\in A_2$, $q\in B_1$ and $q'\in B_2$. If $q$ has two positions different to $u$, then $j=3$ and $q\in\{u^{1,3}, u^{1,2}, u^{2,3}\}$. Thus $A_1\cap B_1=\emptyset$ for $i\geqslant j+3$ or $ i\leqslant j-2$.
We have $A_2\cap B_2=\emptyset$ since $p'_2\neq q'_2$ or $p'_1\neq q'_1$. Now, we consider $A_1\cap B_2$ and $A_2\cap B_1$. Suppose $A_1\cap B_2\neq \emptyset$, that is $u^{i,k}=\overline{(\overline{u}^j)}^{t'}$ for some $k, t'$, then $k=i$ and $t'=j$, thus $A_1\cap B_2 =\{u\}$. Suppose $A_2\cap B_1\neq \emptyset$, that is $\overline{(u^i)}^{k'}=(\overline{u}^j)^t$ for some $k', t$, then $k'=j$ and $t=i$, thus $A_2\cap B_1 =\{\overline{(u^i)}^j\}$. Therefore, we have $N_{AQ_n}(u^i)\cap N_{AQ_n}(\overline{u}^j)=\{u,\overline{(u^i)}^j\}$.
Hence $N_{AQ_n}(u^i)\cap N_{AQ_n}(\overline{u}^{j})=\{u, \overline{(u^i)}^{j}\}$ for $i=1$, $j=2$ or $i=j=2$ or $i=n$, $j=n-1$ or $i\leqslant j-2$ or $i\geqslant j+3$, a contradiction.
\end{proof}
\begin{figure}[!htbp]
\centering
\subfigure[Graph $T$]{\includegraphics[totalheight=5.5cm]{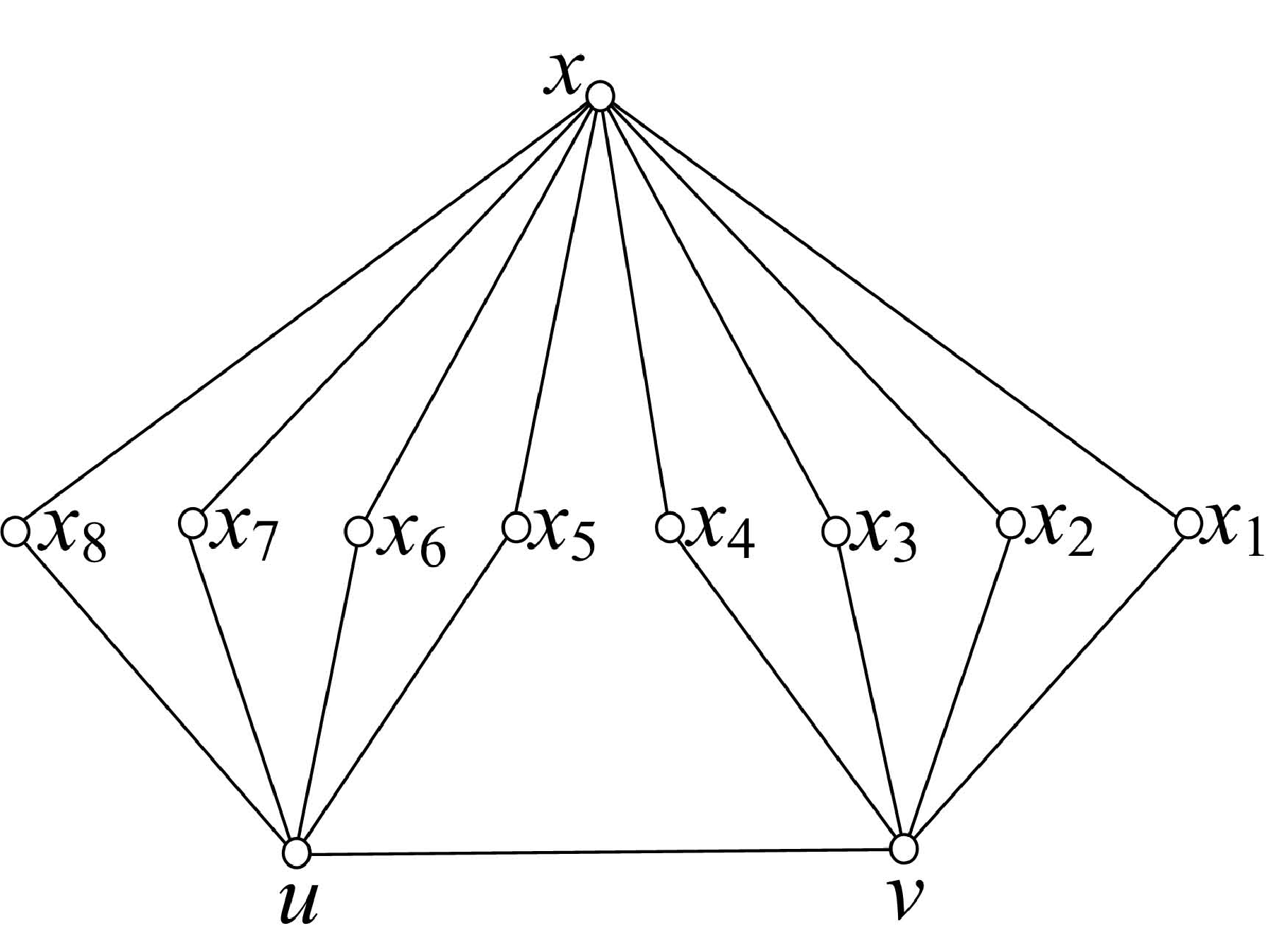}}~~~
\subfigure[Graph $H$]{\includegraphics[totalheight=5.5cm]{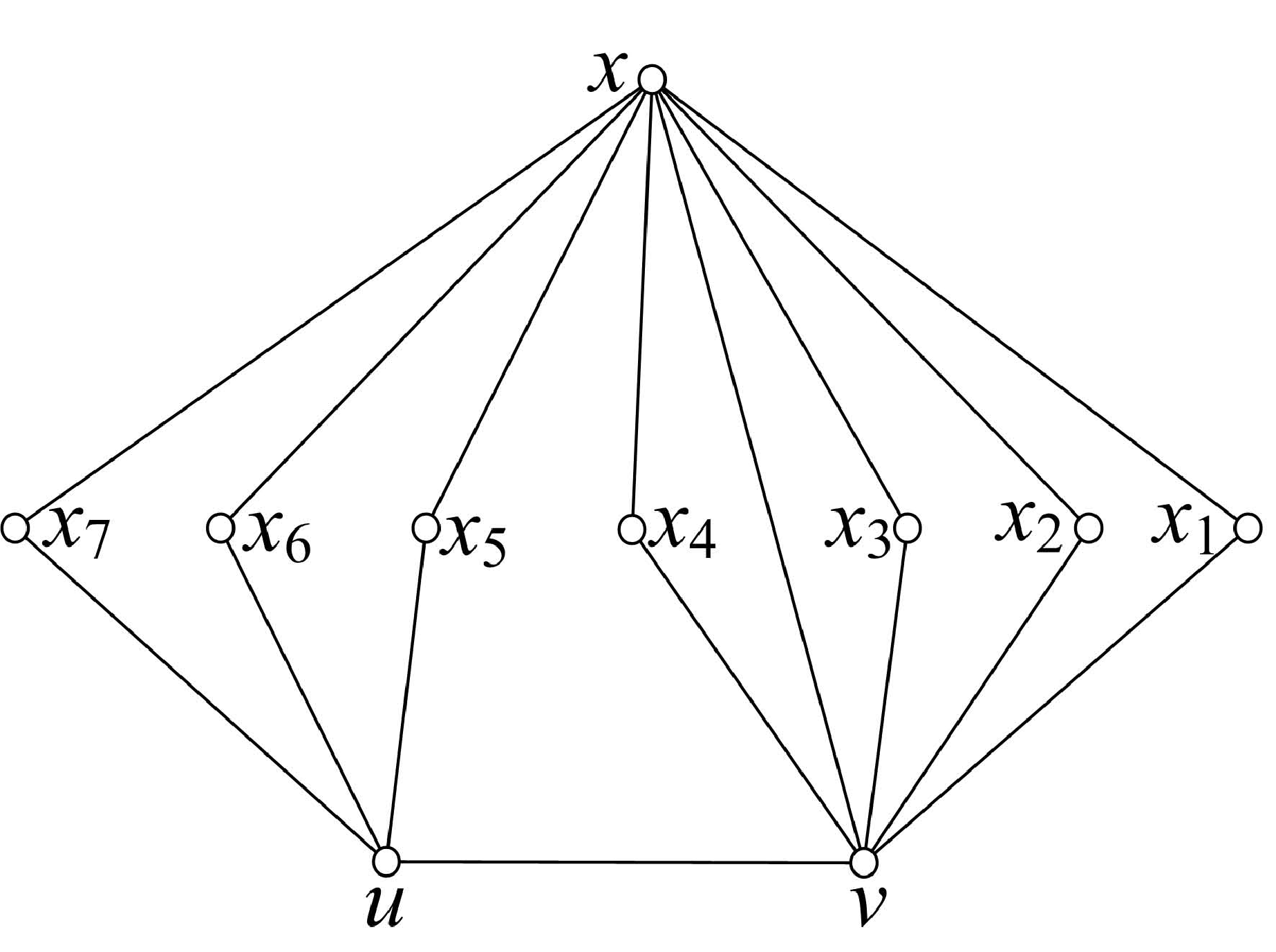}}
\caption{Illustration for Lemmas 4.8 and 4.9.}
\end{figure}

In following, $AQ_n$ has no subgraph isomorphic to $T$(resp.$H$) implies that $T$(resp.$H$) is not a subgraph of $AQ_n$.
\begin{Lem}\label{T}
Let $T$ be a graph with $V(T)=\{x,u,v\}\cup\{x_i|1\leqslant i\leqslant 7\}$ and $E(T)=\{uv,vx_1,vx_2, vx_3, vx_4, ux_5, ux_6, ux_7, ux_8\}\cup\{xx_i|1\leqslant i\leqslant 8\}$\emph{(see Figure 3 (a))}. Then $AQ_n$ has no subgraph isomorphic to $T$.
\end{Lem}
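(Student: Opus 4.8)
The plan is to assume for contradiction that $AQ_n$ contains a subgraph isomorphic to $T$, keeping the labels of the statement, so that $x$ is adjacent to $x_1,\dots,x_8$, the vertex $v$ is adjacent to $x_1,x_2,x_3,x_4$ and to $u$, and $u$ is adjacent to $x_5,x_6,x_7,x_8$ and to $v$. First I would apply Lemma~\ref{CN}: since any two vertices of $AQ_n$ have at most four common neighbours, the inclusions $\{x_1,x_2,x_3,x_4\}\subseteq N_{AQ_n}(x)\cap N_{AQ_n}(v)$ and $\{x_5,x_6,x_7,x_8\}\subseteq N_{AQ_n}(x)\cap N_{AQ_n}(u)$ are in fact equalities. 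Hence $N_{AQ_n}(x)\cap N_{AQ_n}(v)$ and $N_{AQ_n}(x)\cap N_{AQ_n}(u)$ are two $4$-element subsets of $N_{AQ_n}(x)$, and since $x_1,\dots,x_8$ are pairwise distinct these two sets are disjoint. Using the vertex-symmetry of $AQ_n$ I would normalise $x=00\cdots 0$, so that $N_{AQ_n}(x)=\{x^k:1\le k\le n\}\cup\{\overline{x}^k:2\le k\le n\}$, where $x^k$ is the string with a single $1$ in position $k$ and $\overline{x}^k$ the string with $1$'s exactly in positions $1,\dots,k$.

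The core of the proof is a normal form for $4$-common-neighbour sets. I would prove that for every vertex $w$ with $|N_{AQ_n}(x)\cap N_{AQ_n}(w)|=4$ one has $N_{AQ_n}(x)\cap N_{AQ_n}(w)=S(a):=\{x^a,x^{a+1},\overline{x}^{a-1},\overline{x}^{a+1}\}$ for some $a$ with $2\le a\le n-1$. This is obtained by treating any chosen element of $N_{AQ_n}(x)\cap N_{AQ_n}(w)$ as a centre of which $x$ and $w$ are two neighbours having four common neighbours, and running through the classification in Lemmas~\ref{UUCI}, \ref{UIUJ}, \ref{UCIUCJ} and \ref{UIUCJ}; after translating each listed case back to $x=00\cdots0$, the four common neighbours always collapse to a set of the shape $S(a)$. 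Next I would identify the vertices realising a given $S(a)$: computing $N_{AQ_n}(x^a)\cap N_{AQ_n}(x^{a+1})=\{x,x^{a,a+1},\overline{x}^a,(\overline{x}^{a-1})^{a+1}\}$ by Lemma~\ref{UIUJ} and checking that all four of these are also adjacent to $\overline{x}^{a-1}$ and $\overline{x}^{a+1}$, I conclude (again using Lemma~\ref{CN}) that the vertices $w\neq x$ with $N_{AQ_n}(x)\cap N_{AQ_n}(w)=S(a)$ are exactly $\overline{x}^a$, $x^{a,a+1}$ and $z_a:=(\overline{x}^{a-1})^{a+1}$. The feature I would record is that each of these three strings is supported on positions $\le a+1$, with highest nonzero position in $\{a,a+1\}$.

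Writing $N_{AQ_n}(x)\cap N_{AQ_n}(v)=S(a)$ and $N_{AQ_n}(x)\cap N_{AQ_n}(u)=S(b)$, I would next compare the weight-one parts $\{x^a,x^{a+1}\}$ and $\{x^b,x^{b+1}\}$ and the prefix parts $\{\overline{x}^{a-1},\overline{x}^{a+1}\}$ and $\{\overline{x}^{b-1},\overline{x}^{b+1}\}$: disjointness of $S(a)$ and $S(b)$ holds if and only if $|a-b|\ge 3$. To finish I would contradict the edge $uv$. By the previous paragraph $u\in\{\overline{x}^a,x^{a,a+1},z_a\}$ and $v\in\{\overline{x}^b,x^{b,b+1},z_b\}$ with $|a-b|\ge 3$, so only finitely many pairs must be tested, and for each I would verify that $uv$ is not an edge of $AQ_n$. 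Since a hypercube edge flips one coordinate while a complement edge flips an initial segment, starting from one of the three admissible strings around index $a$ and landing on one around index $b$ (with $|a-b|\ge3$) either keeps the highest nonzero position within $\{a,a+1\}$ or produces a run of $1$'s on an interval not beginning at position $1$; in neither case is the result one of $\overline{x}^b,x^{b,b+1},z_b$. Hence $u\not\sim v$, contradicting $uv\in E(T)$, and $AQ_n$ has no subgraph isomorphic to $T$.

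The step I expect to be the main obstacle is the complement-edge subcase just described. Because a complement edge $u\overline{u}^k$ may flip an arbitrarily long prefix, it is not a priori local, so the separation $|a-b|\ge 3$ of the two coordinate windows does not by itself exclude it; the resolution is exactly the normal-form bookkeeping above, namely that flipping a prefix of any of $\overline{x}^a$, $x^{a,a+1}$, $z_a$ yields a string whose support is an interval not starting at position $1$, which matches none of the three admissible forms for $S(b)$. A secondary point needing care is the verification of the normal form $S(a)$ in every branch of Lemma~\ref{UIUCJ}, together with the boundary indices $a\in\{2,n-1\}$ where some member of $S(a)$ degenerates (for instance $\overline{x}^1=x^1$).
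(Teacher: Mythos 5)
Your plan is correct, but it follows a genuinely different route from the paper's. The paper proves Lemma \ref{T} by induction on $n$ via the recursive decomposition $AQ_n=AQ_{n-1}^0\otimes AQ_{n-1}^1$: if $T$ embeds in $AQ_n$ but in neither half, then (placing $x$ in $AQ_{n-1}^0$) a degree count forces $u,v\in V(AQ_{n-1}^0)$ as well, so some leaf $x_i$ lies in $AQ_{n-1}^1$ and is a cross common neighbor of $x$ and one of $u,v$; Lemma \ref{LRN} then forces $u=\overline{x}^{n-1}$ or $v=\overline{x}^{n-1}$, which puts five vertices into $N_{AQ_n}(x)\cap N_{AQ_n}(v)$ (resp. $N_{AQ_n}(x)\cap N_{AQ_n}(u)$), contradicting Lemma \ref{CN}; this needs only Lemmas \ref{CN} and \ref{LRN} and takes a few lines. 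Your argument is non-inductive: you reduce every four-element set $N_{AQ_n}(x)\cap N_{AQ_n}(w)$ to the normal form $S(a)=\{x^a,x^{a+1},\overline{x}^{a-1},\overline{x}^{a+1}\}$ with realizers $\overline{x}^a$, $x^{a,a+1}$, $(\overline{x}^{a-1})^{a+1}$, note that disjointness of $S(a)$ and $S(b)$ forces $|a-b|\geqslant 3$, and then rule out the nine candidate pairs for the edge $uv$. I checked the branches you flag as delicate: the normal form does emerge from Lemmas \ref{UIUJ}, \ref{UCIUCJ} and \ref{UIUCJ} in every case (with the degeneracy $\overline{x}^1=x^1$ absorbing the $i=1$ cases into $S(2)$), and all nine adjacency checks succeed because the symmetric difference of the supports is never a singleton nor a prefix $\{1,\dots,k\}$, so your contradiction is sound. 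In fact your method is closer in spirit to the paper's proof of the companion Lemma \ref{H}, which does run through those classification lemmas case by case. What the paper's induction buys is brevity and independence from that heavy case analysis; what your approach buys is a self-contained, dimension-internal proof together with a stronger structural byproduct (a complete description of the four-common-neighbor fans at a vertex and which vertices realize them), at the cost of substantially more bookkeeping.
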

\begin{proof}We prove it by induction on $n$. For $n\leqslant4$, it is trivial since $AQ_n$ is $(2n-1)$-regular and $T$ has a 8-degree vertex $x$.
Suppose that $T$ is not a subgraph of $AQ_{n-1}$ for $n\geqslant 5$, but $T$ is a subgraph of $AQ_{n}$ for $n\geqslant 5$. 
 Then $V(T)\cap V(AQ_{n-1}^0)\neq\emptyset$ and  $V(T)\cap V(AQ_{n-1}^1)\neq\emptyset$. Otherwise, $T$ is a subgraph of some $AQ_n^i$ for $i=0$ or $1$, 
contradicting the induction hypothesis. Without loss of generality, assume $x\in V(AQ_{n-1}^0)$. Then $x$ has at most two neighbors in $AQ_{n-1}^1\cap T$. Since $|N_T(u)|=|N_T(v)|=5$, $|N_{AQ_{n-1}^0}(u)|\geqslant 3$ or $|N_{AQ_{n-1}^0}(v)|\geqslant 3$. Then $u\in V(AQ_{n-1}^0)$ or $v\in V(AQ_{n-1}^0)$, say $u$, which implies that $|N_{AQ_{n-1}^0}(v)|\geqslant 3$. Thus $\{u,v\}\subset V(AQ_{n-1}^0)$.
So there is $x_i\in V(AQ_{n-1}^1)$ for $1\leqslant i\leqslant 8$, which implies that the $x_i$ is a common neighbor of $x$ and $v$, or $x$ and $u$ in $AQ_{n-1}^1$. By Lemma \ref{LRN}, $u=\overline{x}^{n-1}$ or $v=\overline{x}^{n-1}$. If $u=\overline{x}^{n-1}$, then  $\{x_1, x_2, x_3, x_4, u\}\subseteq N_{AQ_n}(x)\cap N_{AQ_n}(v)$, contradicting Lemma \ref{CN}. If $v=\overline{x}^{n-1}$, then  $\{x_5, x_6, x_7, x_8, v\}\subseteq N_{AQ_n}(x)\cap N_{AQ_n}(u)$, also a contradiction. Thus $T$ is not a subgraph of $AQ_n$.
\end{proof}

\begin{Lem}\label{H}
Let $H$ be a graph with $V(H)=\{x,u,v\}\cup\{x_i|1\leqslant i\leqslant 7\}$ and $E(H)=\{uv,vx,vx_1,vx_2,vx_3,vx_4, ux_5, ux_6, ux_7\}\cup\{xx_i|1\leqslant i\leqslant 7\}$\rm{(see Figure 3 (b))}. Then $AQ_n$ has no subgraph isomorphic to $H$.
\end{Lem}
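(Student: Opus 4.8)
The plan is to prove this by induction on $n$, mirroring the skeleton of Lemma \ref{T} but with extra care, since in $H$ the vertex $x$ is \emph{already} adjacent to $v$. For the base case $n\leqslant 4$, observe that $x$ has degree $8$ in $H$, while for $n\leqslant 3$ the graph $AQ_n$ has fewer than $|V(H)|=10$ vertices and for $n=4$ it is $7$-regular; in either case a degree-$8$ vertex cannot be accommodated, so $H\not\subseteq AQ_n$. For the inductive step, suppose $H$ is not a subgraph of $AQ_{n-1}$ with $n\geqslant 5$, but $H\subseteq AQ_n=AQ_{n-1}^0\otimes AQ_{n-1}^1$. By the induction hypothesis $V(H)$ meets both copies, and without loss of generality $x\in V(AQ_{n-1}^0)$. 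Since $x$ has exactly two neighbors $x^n,\overline{x}^n$ in $AQ_{n-1}^1$, at most two of the eight vertices of $N_H(x)=\{v,x_1,\ldots,x_7\}$ lie in $AQ_{n-1}^1$.

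First I would locate $v$ and $u$. A short cross-slot count shows $v\in V(AQ_{n-1}^0)$: if $v\in AQ_{n-1}^1$, then $v$ occupies one of the two cross-slots of $x$, so at least six of $x_1,\ldots,x_7$ lie in $AQ_{n-1}^0$, hence at least three of the $v$-leaves $x_1,x_2,x_3,x_4$ lie in $AQ_{n-1}^0$; these are then neighbors of $v$ across the copies, exceeding the two cross-neighbors of $v$, a contradiction. With $x,v\in AQ_{n-1}^0$, I split on the location of $u$. In \textbf{Case A}, $u\in AQ_{n-1}^0$, so some $x_i$ lies in $AQ_{n-1}^1$. If $x_i\in\{x_5,x_6,x_7\}$, then $x_i$ is a common neighbor of $x$ and $u$ inside $AQ_{n-1}^1$, so Lemma \ref{LRN} gives $u=\overline{x}^{n-1}$, whence $xu\in E(AQ_n)$; now $u$ joins $x_1,x_2,x_3,x_4$ as a fifth common neighbor of $x$ and $v$, contradicting Lemma \ref{CN}.

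The remaining cases are the genuine obstacle. If the copy-$1$ leaf instead lies in $\{x_1,x_2,x_3,x_4\}$, Lemma \ref{LRN} only yields $v=\overline{x}^{n-1}$; but $xv$ is already an edge of $H$, so, unlike in Lemma \ref{T}, this produces no new adjacency and no immediate count violation, and indeed both $\{x,v\}$ and $\{x,u\}$ then sit at exactly four common neighbors. The same phenomenon occurs in \textbf{Case B}, $u\in AQ_{n-1}^1$, where cross-slot counting forces $V(H)\cap AQ_{n-1}^1=\{u,x_a,x_b\}$ with $\{x_a,x_b\}\subset\{x_5,x_6,x_7\}$. To break these two residual cases I would pass to coordinates, using vertex-transitivity to set $x=0\cdots0$, and compute the relevant common-neighbor sets exactly via Lemmas \ref{UUI}, \ref{UUCI}, \ref{UIUJ}, \ref{UCIUCJ} and \ref{UIUCJ}. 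The point I expect to extract is that $N_{AQ_n}(x)\cap N_{AQ_n}(v)$ and $N_{AQ_n}(x)\cap N_{AQ_n}(u)$ are each pinned down to a specific four-element set, and that these two sets are forced to share a vertex $w$; for example, in the subcase $v=\overline{x}^{n-1}$ one gets $N_{AQ_n}(x)\cap N_{AQ_n}(v)=\{x^{n-1},x^n,\overline{x}^n,\overline{x}^{n-2}\}$ by Lemma \ref{UUCI}, while determining $u$ from the requirement that $x$ and $u$ share four neighbors including $v$ forces $x^{n-1}$ into $N_{AQ_n}(x)\cap N_{AQ_n}(u)$ as well. Such a $w$ is adjacent to all of $x$, $u$ and $v$ simultaneously; but no vertex of $H$ is adjacent to all three of $x,u,v$, so $w$ cannot be realized as a vertex of the embedded $H$, a contradiction. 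The main difficulty is therefore not the inductive skeleton but precisely this forced coincidence of the two four-element common-neighbor sets, which must be verified from the explicit $AQ_n$-neighborhood formulas in each of the finitely many residual subcases.
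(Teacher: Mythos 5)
Your proposal is correct, but it is packaged differently from the paper's proof, and in the end the two meet at the same key step. The paper uses no induction at all: it argues directly that since $x_1,\dots,x_4$ are four common neighbors of the adjacent pair $x,v$, Lemmas \ref{CN}, \ref{UUI} and \ref{UUCI} force $x=\overline{v}^{j}$ with $2\leqslant j\leqslant n-1$ and $N_{AQ_n}(x)\cap N_{AQ_n}(v)=\{v^{j},v^{j+1},\overline{v}^{j+1},\overline{v}^{j-1}\}$; it then splits on whether $uv$ is a complement edge ($u=\overline{v}^{i}$, handled by Lemma \ref{UCIUCJ}) or a hypercube edge ($u=v^{i}$, handled by Lemma \ref{UIUCJ}), and in every subcase where $u$ and $x$ can have four common neighbors, the set $N_{AQ_n}(u)\cap N_{AQ_n}(x)=\{v,x_5,x_6,x_7\}$ is forced to contain some $\overline{v}^{j\pm1}$, hence to meet $\{x_1,\dots,x_4\}$ --- exactly the ``forced coincidence'' contradiction you describe. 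Your inductive wrapper, modeled on Lemma \ref{T}, is sound: the base cases, the cross-slot localization of $v$ and $u$, and the use of Lemmas \ref{LRN} and \ref{CN} to dispose of the configuration where a $u$-leaf lies in the other copy are all valid. But, as you yourself observe, the wrapper does not finish the job: the residual configurations ($v=\overline{x}^{n-1}$ in your Case A, $u\in\{v^{n},\overline{v}^{n}\}$ in your Case B) still require the full common-neighbor computation, which is precisely the paper's entire proof with $j$ (resp.\ the edge $uv$) specialized. Your one worked subcase is computed correctly ($x^{n-1}=\overline{v}^{n-2}$ is indeed the shared vertex when $v=\overline{x}^{n-1}$), and the deferred subcases do all close the same way via Lemmas \ref{UCIUCJ} and \ref{UIUCJ}, so there is no gap; what the induction buys is only a restriction of the coordinate cases to be checked, at the cost of extra machinery, whereas the paper's direct argument handles all cases uniformly and is shorter.
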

\begin{proof}Suppose to the contrary that $H$ is a subgraph of $AQ_n$.  Since $|N_{H}(x)\cap N_{H}(v)|=4$ and $xv\in E(H)$, $x=\overline{v}^j$ for $2\leqslant j\leqslant n-1$ by Lemmas \ref{UUI} and \ref{UUCI}. Then $N_{H}(x)\cap N_{H}(v)=\{x_1, x_2, x_3, x_4\}=\{v^j, v^{j+1},\overline{v}^{j+1}, \overline{v}^{j-1}\}$.
Since $uv\in E(H)$, we have the following cases according to $u=\overline{v}^i$, $2\leqslant i\leqslant n$, or $u=v^i$, $1\leqslant i\leqslant n$.

\textbf{Case 1.} $u=\overline{v}^i$, $2\leqslant i\leqslant n$.

By Lemma \ref{UCIUCJ}, we have $j=i\pm2$ since $|N_{H}(u)\cap N_{H}(x)|=4$. If $j=i+2$, then $N_{H}(u)\cap N_{H}(x)=
\{v,\overline{(\overline{v}^{i})}^j,\overline{v}^{j-1},
(\overline{v}^i)^{j}\}=\{v,x_5, x_6, x_7\}$. We have
$\{v,x_5, x_6, x_7\}\cap \{x_1, x_2, x_3, x_4\}=\{\overline{v}^{j-1}\}$, a contradiction.
If $j=i-2$, then $N_{H}(u)\cap N_{H}(x)= \{v,\overline{(\overline{v}^{j})}^i,\overline{v}^{j+1},
(\overline{v}^j)^{i}\}=\{v,x_5, x_6, x_7\}$.  Thus
$\{v,x_5, x_6, x_7\}\cap \{x_1, x_2, x_3, x_4\}=\{\overline{v}^{j+1}\}$, a contradiction.

\textbf{Case 2.} $u=v^i$, $1\leqslant i\leqslant n$

Since $|N_{H}(u)\cap N_{H}(x)|=4$, we have the following two subcases by Lemma \ref{UIUCJ}.

\textbf{Subcase 2.1.} $i=1$ and $j=3$, or $i=j$ for $3\leqslant j\leqslant n$.

If $i=1$ and $j=3$, then $N_{H}(u)\cap N_{H}(x)=\{v,v^{1,3}, v^{1,2}, v^{2,3}\}$. That is, $\{v,x_5, x_6, x_7\}=\{v,v^{1,3}, v^{1,2}, v^{2,3}\}$. Thus $\{v,x_5, x_6, x_7\}\cap \{x_1, x_2, x_3, x_4\}=\{\overline{v}^{2}\}$, a contradiction.
 If $i=j$, $3\leqslant j\leqslant n$, then $N_{H}(v^i)\cap N_{H}(\overline{v}^j)=\{v,v^{j,j-1}, \overline{v}^{j-1}, (\overline{v}^j)^{j-1}\}=\{v,x_5, x_6, x_7\}$.
Thus
$\{v,x_5, x_6, x_7\}\cap \{x_1, x_2, x_3, x_4\}=\{\overline{v}^{j-1}\}$, a contradiction.

\textbf{Subcase 2.2.} $i=j-1$ for $3\leqslant j\leqslant n$, or $i=j+1$ for $2\leqslant j\leqslant n-2$, or $i=j+2$.

If $i=j-1$ for $3\leqslant j\leqslant n$ or $i=j+1$ for $2\leqslant j\leqslant n-2$, then $N_{H}(u)\cap N_{H}(x)=\{v,v^{i,i+1}, \overline{v}^{i}, \overline{(v^i)}^{i+1}\}$. Thus $\{v,x_5, x_6, x_7\}=\{v,v^{j-1,j}, \overline{v}^{j-1}, \overline{(v^{j-1})}^{j}\}$ for $i=j-1$ and $\{v,x_5, x_6, x_7\}=\{v,v^{j+1,j+2}, \overline{v}^{j+1},\overline{(v^{j+1})}^{j+2}\}$ for $i=j+1$.
It is easy to find $\{v,x_5, x_6, x_7\}\cap \{x_1, x_2, x_3, x_4\}=\{\overline{v}^{j\mp1}\}$ for $i=j\mp1$, a contradiction.
If $i=j+2$, then $N_{H}(u)\cap N_{H}(x)=\{v,v^{i,i-1}, \overline{v}^{i-1}, \overline{(v^i)}^{i-2}\}=\{v,v^{j+2,j+1}, \overline{v}^{j+1}, \overline{(v^{j+2})}^{j}\}=\{v,x_5, x_6, x_7\}$.
 Thus
$\{v,x_5, x_6, x_7\}\cap \{x_1, x_2, x_3, x_4\}=\{\overline{v}^{j+1}\}$, a contradiction.
\end{proof}

\begin{Lem}\label{K1M}
Let $K_{1,m}$ be a star in $AQ_{n}$ with $n\geqslant 5$ and $2\leqslant m\leqslant 2n-1$. If $uv$ is an edge in $FQ_{n}-K_{1,m}$, then
$|N_{AQ_n}(\{u,v\})\cap V(K_{1,m})|\leqslant 7$.
\end{Lem}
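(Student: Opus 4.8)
The plan is to fix once and for all the center $c$ and the leaf set $L$ of the star $K_{1,m}$, and to bound $N_{AQ_n}(\{u,v\})\cap V(K_{1,m})$ by counting its contribution from the center and from the leaves separately. Since $uv$ is an edge of $AQ_n-K_{1,m}$, both endpoints lie outside the star, so $u,v\neq c$ and $u,v\notin L$. The first observation I would record is that every leaf adjacent to $u$ is a common neighbor of $u$ and $c$, because each leaf is by definition adjacent to the center; the same holds for $v$. Hence the number of leaves adjacent to $u$ is at most $|N_{AQ_n}(u)\cap N_{AQ_n}(c)|\leqslant 4$ by Lemma \ref{CN}, and likewise at most $4$ leaves are adjacent to $v$, while the center contributes at most $1$ to the count.

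Next I would use the edge $uv$ to sharpen these bounds, which is the crucial step. If $v$ is adjacent to $c$, then $v$ is itself a common neighbor of $u$ and $c$ (it is adjacent to $u$ through the edge $uv$ and to $c$ by assumption), and $v\notin L$; therefore at most $4-1=3$ of the four common neighbors of $u$ and $c$ can be leaves, so $u$ is adjacent to at most $3$ leaves. Symmetrically, if $u\sim c$ then $v$ is adjacent to at most $3$ leaves. I would then split into three cases according to how many of $u,v$ are adjacent to $c$. If both are adjacent to $c$, the center contributes $1$ and, by the sharpened bound, $u$ and $v$ are each adjacent to at most $3$ leaves, giving a total of at most $1+3+3=7$. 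If neither is adjacent to $c$, the center contributes $0$ and each of $u,v$ is adjacent to at most $4$ leaves, for a total of at most $8$. If exactly one of them, say $v$, is adjacent to $c$, the center contributes $1$, $u$ is adjacent to at most $3$ leaves (by the sharpening) and $v$ to at most $4$ leaves, again giving a total of at most $8$.

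It then remains to rule out the two extremal possibilities that attain $8$, and this is where the forbidden-subgraph lemmas enter. In the ``neither'' case, equality forces $u$ and $v$ to be adjacent to two disjoint $4$-element sets of leaves of $c$; together with $c$ (adjacent to all eight such leaves) and the edge $uv$, this is precisely a copy of the graph $T$, contradicting Lemma \ref{T}. In the ``exactly one'' case, equality forces $v$ adjacent to $c$ and to four leaves, and $u$ adjacent to three further leaves disjoint from those of $v$; together with $c$ and the edge $uv$ this is exactly a copy of $H$, contradicting Lemma \ref{H}. I expect the main obstacle to be the bookkeeping of \emph{which} vertex the edge $uv$ forces into $N_{AQ_n}(u)\cap N_{AQ_n}(c)$ (or $N_{AQ_n}(v)\cap N_{AQ_n}(c)$): getting this right is what both produces the sharp ``at most $3$'' bound and eliminates the spurious near-extremal patterns (for instance $v\sim c$ with $u$ adjacent to four leaves, or $u$ and $v$ sharing a leaf), since any of those would give $u$ or $v$ five common neighbors with $c$, already impossible by Lemma \ref{CN}. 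Thus in every case $|N_{AQ_n}(\{u,v\})\cap V(K_{1,m})|\leqslant 7$.
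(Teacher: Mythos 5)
Your proof is correct and follows essentially the same route as the paper's: bound the leaves adjacent to $u$ and to $v$ by the common-neighbor bound of Lemma \ref{CN}, use the edge $uv$ to sharpen the bound to $3$ when the other endpoint is adjacent to the center, and eliminate the two extremal configurations attaining $8$ via the forbidden subgraphs $T$ and $H$ of Lemmas \ref{T} and \ref{H}. The only difference is cosmetic: you argue directly with three cases (both, neither, exactly one endpoint adjacent to the center), whereas the paper assumes $|N_{AQ_n}(\{u,v\})\cap V(K_{1,m})|\geqslant 8$ and merges your first and third cases into one contradiction argument.
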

\begin{proof}Suppose to the contrary that there exists a $K_{1,m}$ such that $|N_{AQ_n}(\{u,v\})\cap V(K_{1,m})| \geqslant 8$. Let $V(K_{1,m})=\{x\}\cup\{x_i|1\leqslant i\leqslant m\}$ and $E(K_{1,m})=\{xx_i|1\leqslant i\leqslant m\}$.

If $ux$ or $vx\in E(AQ_n)$, without loss of generality, assume $vx\in E(AQ_n)$, then $v$ is a common neighbor of $u$ and $x$. We have $|\{x_i|1\leqslant i\leqslant m\}\cap N_{AQ_n}(u)|\leqslant 3$ and $|\{x_i|1\leqslant i\leqslant m\}\cap N_{AQ_n}(v)|\leqslant 4$ by Lemma \ref{CN}. Since $|N_{AQ_n}(\{u,v\})\cap V(K_{1,m})|\geqslant 8$, $|\{x_i|1\leqslant i\leqslant m\}\cap N_{AQ_n}(u)|=3$ and $|\{x_i|1\leqslant i\leqslant m\}\cap N_{AQ_n}(v)|=4$. Then we set $\{vx_i|1\leqslant i\leqslant 4\}\subset E(AQ_n)$ and $\{ux_i|5\leqslant i\leqslant 7\}\subset E(AQ_n)$. Let $H$ be a graph with $V(H)=\{x,u,v\}\cup\{x_i|1\leqslant i\leqslant 7\}$ and $E(H)=\{uv,vx,vx_1,vx_2,vx_3,vx_4, ux_5, ux_6, ux_7\}\cup\{xx_i|1\leqslant i\leqslant 7\}$. Thus $H$ is not a subgraph of $AQ_n$ by Lemma \ref{H}, a contradiction.

If $ux\notin E(AQ_n)$ and $vx\notin E(AQ_n)$, then we have $|\{x_i|1\leqslant i\leqslant m\}\cap N_{AQ_n}(u)|=4$ and $|\{x_i|1\leqslant i\leqslant m\}\cap N_{AQ_n}(v)|=4$ by Lemma \ref{CN}. Thus we set $\{vx_i|1\leqslant i\leqslant 4\}\subset E(AQ_n)$ and $\{ux_i|5\leqslant i\leqslant 8\}\subset E(AQ_n)$. Let $T$ be a graph with $V(T)=\{x,u,v\}\cup \{x_i|1\leqslant i\leqslant 8\}$ and $E(T)=\{uv\}\cup\{ xx_k|1\leqslant k\leqslant 8\}\cup\{ vx_i|1\leqslant i\leqslant 4,\}\cup\{ux_j|5\leqslant j\leqslant 8\}$. Then $T$ is not a subgraph of $AQ_n$ by Lemma \ref{T}, a contradiction.
\end{proof}
\begin{Rem}
{\rm To show the result of Lemma \ref{K1M} is sharp, we see the following example. See Figure \ref{rem}. for an illustration. For $2\leqslant i\leqslant n-2$, we set $v=\overline{x}^i$, $u=\overline{(\overline{x}^i)}^{i+2}$ and $V(K_{1,6})=\{x,x^i,x^{i+1}, \overline{x}^{i+1}, \overline{x}^{i-1}, x^{i+2}, \overline{x}^{i+2}\}$, $E(K_{1,6})=\{xx^i,xx^{i+1}, x\overline{x}^{i+1}, x\overline{x}^{i-1}, xx^{i+2},\\ x\overline{x}^{i+2}\}$. Then we can find $\{vx, vx^i,vx^{i+1}, v\overline{x}^{i+1}, v\overline{x}^{i-1}, vu\}\subset E(AQ_n)$.  We know $ux^{i+2}$ is a hypercube edge of dimension $i+1$ and $u\overline{x}^{i+2}$ is a complement edge of dimension $i$, respectively.
Then $|N_{AQ_n}(\{u,v\})\cap V(K_{1,6})|=|\{x,x^i,x^{i+1}, \overline{x}^{i+1}, \overline{x}^{i-1}, x^{i+2}, \overline{x}^{i+2}\}|=7$. Thus, the result of Lemma \ref{K1M} is optimal.}
\end{Rem}
\begin{figure}[!htbp]
\begin{center}
\includegraphics[totalheight=6cm]{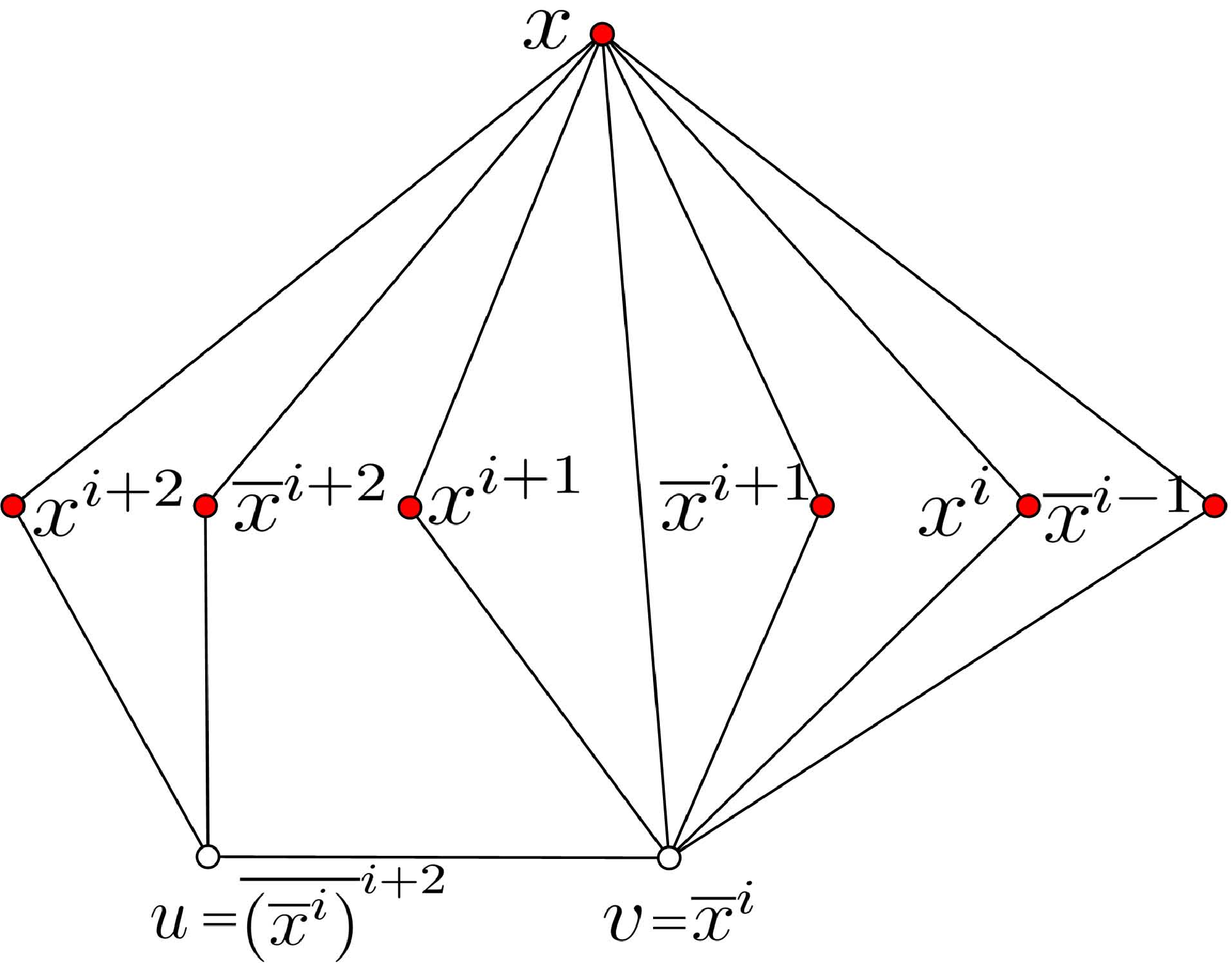}
\caption{\label{rem}\small{Illustration for Remark 4.12}}
\end{center}
\end{figure}

\begin{Lem}\emph{\cite{23}}\label{SK1M}
Let $F$ be a $K_{1,m}$-substructure cut of $AQ_n$ with $n\geqslant 6$. If there exists an isolated vertex in $AQ_n-V(F)$, then $|F|\geqslant \lceil\frac{n-1}{2}\rceil$.
\end{Lem}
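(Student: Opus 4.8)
The plan is to exploit that an isolated vertex $u$ of $AQ_n-V(F)$ forces all of its $2n-1$ neighbours into $V(F)$, and then to count how many of these neighbours a single substructure (a star $K_{1,m'}$ with $m'\le m$) can absorb. Writing $N_{AQ_n}(u)=\{u^i:1\le i\le n\}\cup\{\overline{u}^i:2\le i\le n\}$, I would single out the $n$ \emph{hypercube} neighbours $H=\{u^1,\dots,u^n\}$ as the scarce resource to be covered, because the substructures that are most efficient at covering $N_{AQ_n}(u)$ as a whole still absorb very few vertices of $H$. The whole argument then reduces to bounding $|V(S)\cap H|$ for each star $S\in F$.

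First I would prove the key structural claim: for every star $S\in F$, if $|V(S)\cap H|\ge 3$ then in fact $V(S)\cap H=\{u^1,u^2,u^3\}$. The proof is a case analysis on the centre $c_S$ of $S$, since any vertex of $V(S)\cap H$ other than $c_S$ must be a common neighbour of $u$ and $c_S$. If $c_S=u^k$ is a hypercube neighbour, Lemma \ref{UUI} shows that $N_{AQ_n}(u)\cap N_{AQ_n}(u^k)$ meets $H$ in at most one vertex (and only for $k\le 2$), so together with the centre $|V(S)\cap H|\le 2$. If $c_S=\overline{u}^k$ is a complement neighbour, Lemma \ref{UUCI} gives $N_{AQ_n}(u)\cap N_{AQ_n}(\overline{u}^k)=\{u^k,u^{k+1},\overline{u}^{k+1},\overline{u}^{k-1}\}$, whose intersection with $H$ has three vertices only when $k=2$, where it equals $\{u^1,u^2,u^3\}$ (using the identity $\overline{u}^{1}=u^{1}$), and has at most two otherwise. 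If $c_S\notin N_{AQ_n}(u)$, then every vertex of $V(S)\cap H$ is a common neighbour of $u$ and $c_S$, so by Lemma \ref{CN} there are at most four of them; feeding three such hypercube neighbours $u^a,u^b,u^c$ into Lemma \ref{UIUJ} forces $c_S$ to be a common neighbour of each pair, which is only possible when $\{a,b,c\}=\{1,2,3\}$, and the same computation rules out a fourth. This establishes the claim and, as a by-product, that $|V(S)\cap H|\le 3$ for every $S$; the triple is genuinely attainable (e.g. by the centre $\overline{u}^2$ or the non-neighbour $u^{1,3}$), which is why it must be singled out.

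With the claim in hand I would finish by a short covering/counting argument on $H$. Every vertex of $H$ lies in some $V(S)$, and by the claim each star either meets $H$ in at most two vertices or meets it exactly in the low triple $\{u^1,u^2,u^3\}$ (and hence in none of $\{u^4,\dots,u^n\}$). If no star realises the low triple, then each star covers at most two of the $n$ vertices of $H$, so $|F|\ge\lceil \frac{n}{2}\rceil\ge\lceil\frac{n-1}{2}\rceil$. Otherwise I fix one star covering $\{u^1,u^2,u^3\}$; the remaining $n-3$ vertices $u^4,\dots,u^n$ must then be covered by the \emph{other} stars, each absorbing at most two of them, so there are at least $\lceil\frac{n-3}{2}\rceil$ of these, giving $|F|\ge 1+\lceil\frac{n-3}{2}\rceil=\lceil\frac{n-1}{2}\rceil$. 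In both cases $|F|\ge\lceil\frac{n-1}{2}\rceil$, as required.

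I expect the main obstacle to be the structural claim, and specifically the non-neighbour case $c_S\notin N_{AQ_n}(u)$: there one must combine Lemma \ref{CN} with the exact common-neighbour description of Lemma \ref{UIUJ} to show that three hypercube neighbours of $u$ can share a common neighbour only in the lowest three dimensions, and that a fourth is impossible. The hypercube/complement asymmetry created by $\overline{u}^1=u^1$ is exactly what makes the low triple exceptional, and getting this boundary bookkeeping right (rather than off by one, which would only give $\lceil\frac{n-3}{2}\rceil$) is the delicate point. The hypothesis $n\ge 6$ is used only to guarantee that $AQ_n$ is $(2n-1)$-regular and that the cited common-neighbour lemmas apply.
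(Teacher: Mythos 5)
The first thing to note is that the paper contains no proof of Lemma \ref{SK1M} at all: it is imported verbatim from \cite{23} and used as a black box, so there is no in-paper argument to compare yours against; your proposal has to be judged on its own merits, and on those merits it is correct. Your structural claim checks out case by case against the lemmas the paper does state: if the centre of a star $S$ is $u^k$, Lemma \ref{UUI} caps $|V(S)\cap H|$ at $2$; if it is $\overline{u}^k$, Lemma \ref{UUCI} yields the triple $\{u^1,u^2,u^3\}$ exactly when $k=2$ (via $\overline{u}^1=u^1$) and at most two vertices of $H$ otherwise; and if the centre is a non-neighbour of $u$, then for any three leaves $u^a,u^b,u^c$ with $a<b<c$ and $a\geqslant 2$ (or with a gap of at least $2$ somewhere above index $3$), Lemma \ref{UIUJ} forces one of the pairwise common-neighbour sets to be $\{u,u^{\cdot,\cdot}\}$, which cannot meet the other pair sets except in the forbidden vertex $u$ --- so only $\{1,2,3\}$ survives, and the same argument kills a fourth leaf. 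The final count $1+\lceil\frac{n-3}{2}\rceil=\lceil\frac{n-1}{2}\rceil$ is right, and singling out the triple is not optional: it is attainable (the star $S_1$ centred at $\overline{u}^2$ in the proof of Lemma \ref{AU} contains $u^1,u^2,u^3$), so any argument ignoring it would be wrong, and any argument charging $3$ to every star would only give $\lceil\frac{n}{3}\rceil$. Your decision to count only the $n$ hypercube neighbours is also the essential idea rather than a convenience: counting all of $N_{AQ_n}(u)$ in the style of the paper's folded-hypercube argument (each star absorbs at most $5$ neighbours of $u$, by Lemma \ref{CN} plus the centre) would only give $|F|\geqslant\lceil\frac{2n-1}{5}\rceil$, which is too weak. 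One minor caveat: as you note, $n\geqslant 6$ is never really used, so the hypothesis is simply inherited from the statement in \cite{23}.
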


\begin{Lem}\label{AL}
For integers $m,n$ with $ 4\leqslant m\leqslant \frac{3n-15}{4}$, $\kappa^s(AQ_n; K_{1,m})\geqslant \lceil\frac{n-1}{2}\rceil$.
\end{Lem}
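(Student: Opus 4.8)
The plan is to argue by contradiction, following the skeleton of Lemma \ref{FL} but substituting the augmented-cube estimates for the folded-hypercube ones. Suppose $\kappa^{s}(AQ_n;K_{1,m})<\lceil\frac{n-1}{2}\rceil$, so there is a $K_{1,m}$-substructure cut $\mathcal{F}$ of $AQ_n$ with $|\mathcal{F}|\leqslant\lceil\frac{n-1}{2}\rceil-1$. Every element of $\mathcal{F}$ is isomorphic to a connected subgraph of $K_{1,m}$, hence a star on at most $m+1$ vertices, so $|V(\mathcal{F})|\leqslant(m+1)(\lceil\frac{n-1}{2}\rceil-1)$. Note that $4\leqslant m\leqslant\frac{3n-15}{4}$ forces $n\geqslant 11$, so all hypotheses of the lemmas below are met. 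I split on whether $AQ_n-V(\mathcal{F})$ has an isolated vertex. If it does, Lemma \ref{SK1M} immediately yields $|\mathcal{F}|\geqslant\lceil\frac{n-1}{2}\rceil$, contradicting the assumption; this case is therefore free.

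The substantive case is when $AQ_n-V(\mathcal{F})$ has no isolated vertex. Let $C$ be a smallest component, put $k:=|V(C)|\geqslant 2$, and fix an edge $uv$ of $C$. Since any two vertices of $AQ_n$ share at most $4$ common neighbors (Lemma \ref{CN}) and $AQ_n$ is $(2n-1)$-regular, $|N_{AQ_n}(\{u,v\})|\geqslant 2(2n-1)-4-2=4n-8$. As $C$ is an entire component, every neighbor of $\{u,v\}$ lying outside $C$ belongs to $V(\mathcal{F})$, while at most $k-2$ of these neighbors lie in $C$; hence $|N_{AQ_n}(\{u,v\})\cap V(\mathcal{F})|\geqslant 4n-6-k$. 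By Lemma \ref{K1M} each member of $\mathcal{F}$ meets $N_{AQ_n}(\{u,v\})$ in at most $7$ vertices, so $7|\mathcal{F}|\geqslant 4n-6-k$, which rearranges to the lower bound $k\geqslant 4n-6-7|\mathcal{F}|\geqslant 4n-6-7(\lceil\frac{n-1}{2}\rceil-1)$. A short parity check shows the right-hand side equals $\frac{n}{2}+1$ for even $n$ and $\frac{n+9}{2}$ for odd $n$, so in either case $k\geqslant\lfloor\frac{n}{2}\rfloor+1$.

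Next I bound $k$ from above by the isoperimetric estimate of Lemma \ref{Qn}. Set $k_0:=\lfloor\frac{n}{2}\rfloor+1$, so that $1\leqslant k_0\leqslant n-2$ for $n\geqslant 11$. Plugging $k_0$ into $f(x)=-\frac{x^2}{2}+(n-\frac12)x+1$ and comparing the resulting quadratic in $n$ with $(m+1)(\lceil\frac{n-1}{2}\rceil-1)$, the hypothesis $m\leqslant\frac{3n-15}{4}$ suffices to guarantee $|V(\mathcal{F})|<f(k_0)$. Then Remark \ref{FA} together with Lemma \ref{Qn}\ref{cond 1} shows that all the small components of $AQ_n-V(\mathcal{F})$ have at most $k_0-1=\lfloor\frac{n}{2}\rfloor$ vertices in total; in particular $k\leqslant\lfloor\frac{n}{2}\rfloor$. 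This contradicts $k\geqslant\lfloor\frac{n}{2}\rfloor+1$, which completes the proof.

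The main obstacle is calibrating the two bounds on $k$ so that they genuinely collide. The lower bound depends on the sharpness of Lemma \ref{K1M}: the constant $7$ (rather than $8$) is what makes $k$ exceed $\lfloor\frac{n}{2}\rfloor$. The upper bound depends on choosing the isoperimetric parameter $k_0$ near $\frac{n}{2}$ instead of near $m$; the naive choice $k_0=m+1$ only gives $k\leqslant m$, which is useless once $m$ grows past $\frac{n}{2}$. Threading these together requires the single inequality $|V(\mathcal{F})|<f(\lfloor\frac{n}{2}\rfloor+1)$ to hold, and verifying that inequality—while carefully tracking the two parities hidden in $\lceil\frac{n-1}{2}\rceil$ and $\lfloor\frac{n}{2}\rfloor$—is the one genuinely delicate computation, with the bound $m\leqslant\frac{3n-15}{4}$ supplying exactly the room needed.
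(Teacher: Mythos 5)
Your proof is correct, and it shares the paper's skeleton: the same contradiction setup, the isolated-vertex case dispatched by Lemma~\ref{SK1M}, an edge $uv$ in a smallest component counted against Lemmas~\ref{CN} and~\ref{K1M} to bound $k$ from below, and Remark~\ref{FA} with Lemma~\ref{Qn}$(i)$ to bound $k$ from above. Where you genuinely differ is the calibration of the two bounds. The paper works with the looser consequence $n<2k+5$ and feeds the $m$-dependent value $\frac{2m}{3}+1$ into $f$, so that $k\leqslant\frac{2m}{3}$; the hypothesis $m\leqslant\frac{3n-15}{4}$ is then spent at the very end, via $2k+5\leqslant\frac{4m}{3}+5\leqslant n$. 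You instead keep the ceilings to get the sharper lower bound $k\geqslant\lfloor\frac{n}{2}\rfloor+1$ and feed the $n$-dependent integer $k_0=\lfloor\frac{n}{2}\rfloor+1$ into $f$, spending the hypothesis on $m$ only to verify $|V(\mathcal{F})|<f(k_0)$. That inequality, which you flag but do not write out, does hold: with $m+1\leqslant\frac{3n-11}{4}$ one gets, for even $n$, $|V(\mathcal{F})|\leqslant\frac{(3n-11)(n-2)}{8}<\frac{3n^2+2n}{8}=f(\frac{n}{2}+1)$ (equivalent to $19n>22$), and for odd $n$, $|V(\mathcal{F})|\leqslant\frac{(3n-11)(n-3)}{8}<\frac{3n^2+5}{8}=f(\frac{n+1}{2})$ (equivalent to $20n>28$). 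Your calibration buys three things: the contradiction $k\leqslant\lfloor\frac{n}{2}\rfloor$ versus $k\geqslant\lfloor\frac{n}{2}\rfloor+1$ is immediate; the parameter passed to Lemma~\ref{Qn}$(i)$ is always an integer, whereas the paper's $\frac{2m}{3}+1$ need not be (a small wrinkle in the published version); and the same computation tolerates $m$ up to roughly $\frac{3n}{4}$, slightly beyond the stated range, whereas the paper's calibration is exactly tuned to $m\leqslant\frac{3n-15}{4}$. What the paper's choice buys in exchange is a single parity-free computation, $f(\frac{2m}{3}+1)-1-|V(\mathcal{F})|>0$, in place of your two-case parity check.
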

\begin{proof}Suppose to the contrary that $\kappa^{s}(AQ_{n};K_{1,m})<\lceil\frac{n-1}{2}\rceil$. Then there exists a set $\mathcal{F}$ of subgraphs of $AQ_{n}$ that each is a star of at most $m$ leaves so that
$|\mathcal{F}|\leqslant\lceil\frac{n-1}{2}\rceil-1$ and $AQ_{n}-V(\mathcal{F})$ is disconnected or a vertex. Let $C$ be a smallest component of $AQ_{n}-V(\mathcal{F})$ with $k:=|V(C)|$. If $k=1$, then $|\mathcal{F}|\geqslant \lceil\frac{n-1}{2}\rceil$ by Lemma \ref{SK1M}, a contradiction. Hence $k\geqslant 2$.

For an edge $uv$ in $C$, $|N_{AQ_n}(\{u,v\})|\geqslant4n-8$ by Lemma \ref{CN}.  Then we have $|N_{C}(\{u,v\})|\leqslant k-2$ and $|N_{AQ_n}(\{u,v\})\cap V(\mathcal{F})|\geqslant 4n-8-(k-2)$. Since $|N_{AQ_n}(\{u,v\})\cap V(K_{1,m'})|\leqslant 7$ for $0\leqslant m'\leqslant m$ by Lemma \ref{K1M}, $\lceil\frac{n-1}{2}\rceil>|\mathcal{F}|\geqslant\lceil\frac{4n-8-(k-2)}{7}\rceil$. Then $n<2k+5$. It is easy to find $|V(\mathcal{F})|\leqslant (m+1)(\lceil\frac{n-1}{2}\rceil-1)\leqslant\frac{(n-2)(m+1)}{2}$.

Since $m\leqslant \frac{3n-15}{4}$, $n\geqslant \frac{4m}{3}+5$ and $\frac{2m}{3}+1\leqslant n-2$. Then we have
$$
\aligned
f(\frac{2m}{3}+1)-1-|V(\mathcal{F})|\geqslant&-\frac{1}{2}(\frac{2m}{3}+1)^2+
(n-\frac{1}{2})(\frac{2m}{3}+1)+1-1-\frac{(n-2)(m+1)}{2}\\
=&\frac{1}{18}[-(2m+3)^2+3(2n-1)(2m+3)-9(n-2)(m+1)]\\
=&\frac{1}{18}[(3m+9)n-4m^2)]\\
\geqslant&\frac{1}{18}[(3m+9)(\frac{4m}{3}+5)-4m^2]\\
=&\frac{1}{18}(27m+45)>0,
\endaligned
$$
which implies that $|V(\mathcal{F})|< f(\frac{2m}{3}+1)-1$. Then $AQ_n-V(\mathcal{F})$ contains exactly a large component with the small components having at most $\frac{2m}{3}$ vertices in total by Remark \ref{FA} and Lemma \ref{Qn} $(i)$. Thus $k\leqslant \frac{2m}{3}$ and $2k+5\leqslant \frac{4m}{3}+5\leqslant n$, a contradiction.
\end{proof}

\begin{Lem}\label{AU}
For integers $m,n$ with $4\leqslant m\leqslant 2n-2$, $\kappa(AQ_n; K_{1,m})\leqslant \lceil\frac{n-1}{2}\rceil$.
\end{Lem}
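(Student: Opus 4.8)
The plan is to exploit the vertex-transitivity of $AQ_n$ to fix $u=00\ldots0$ and to build a family $\mathcal{S}$ of at most $\lceil\frac{n-1}{2}\rceil$ subgraphs, each isomorphic to $K_{1,m}$, whose vertex set $V(\mathcal{S})$ contains the entire neighbourhood
$N_{AQ_n}(u)=\{u^i\mid 1\leqslant i\leqslant n\}\cup\{\overline{u}^i\mid 2\leqslant i\leqslant n\}$,
a set of $2n-1$ vertices. Once $N_{AQ_n}(u)\subseteq V(\mathcal{S})$ while $u\notin V(\mathcal{S})$, the vertex $u$ is isolated in $AQ_n-V(\mathcal{S})$, so $\mathcal{S}$ is a $K_{1,m}$-structure cut and the bound $\kappa(AQ_n;K_{1,m})\leqslant\lceil\frac{n-1}{2}\rceil$ follows.

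The building block comes from Lemma \ref{UUCI}: for each $i$ with $2\leqslant i\leqslant n-1$ the neighbour $\overline{u}^i$ of $u$ satisfies $N_{AQ_n}(u)\cap N_{AQ_n}(\overline{u}^i)=\{u^i,u^{i+1},\overline{u}^{i-1},\overline{u}^{i+1}\}$. I would therefore take, for every \emph{even} $i$ in $[2,n-1]$, a star $S_i$ centred at $\overline{u}^i$ whose leaves include these four common neighbours together with $m-4$ further neighbours of $\overline{u}^i$; such a star absorbs the five neighbours $\overline{u}^i,u^i,u^{i+1},\overline{u}^{i-1},\overline{u}^{i+1}$ of $u$ at once. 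Spacing the centres with step $2$ makes the hypercube-windows $\{i,i+1\}$ and the complement-windows $\{i-1,i,i+1\}$ overlap and tile: as $i$ runs through $2,4,\ldots$ the stars cover $u^1,\ldots,u^n$ (the identity $\overline{u}^1=u^1$ accounts for index $1$ at the bottom end) and all of $\overline{u}^2,\ldots,\overline{u}^n$.

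The counting and the two end-effects are where the real care lies. When $n$ is odd, the centres $\overline{u}^2,\overline{u}^4,\ldots,\overline{u}^{n-1}$ are exactly $\frac{n-1}{2}=\lceil\frac{n-1}{2}\rceil$ stars, and the last one already captures $u^n$ and $\overline{u}^n$. When $n$ is even, the centres $\overline{u}^2,\ldots,\overline{u}^{n-2}$ number $\frac{n}{2}-1$ and leave precisely $u^n$ and $\overline{u}^n$ uncovered, so I would append one extra star centred at $u^{n,n-1}$ whose leaves contain the pair $u^n,\overline{u}^n$; these are common neighbours of $u^{n,n-1}$ by the $i=j=n$ case of Lemma \ref{UIUCJ}, and the total becomes $\frac{n}{2}=\lceil\frac{n-1}{2}\rceil$.

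Finally one must verify that each $S_j$ genuinely is a $K_{1,m}$ and that $u$ never slips into $V(\mathcal{S})$. Since $u\in N_{AQ_n}(\overline{u}^i)$, the delicate point is that the $m-4$ padding leaves of each $S_i$ must be drawn from $N_{AQ_n}(\overline{u}^i)\setminus(\{u\}\cup\{u^i,u^{i+1},\overline{u}^{i-1},\overline{u}^{i+1}\})$, a set of $2n-6$ vertices by the $(2n-1)$-regularity of $AQ_n$; such a choice is possible exactly when $m-4\leqslant 2n-6$, i.e. $m\leqslant 2n-2$, which is precisely the hypothesis (the lower end $m\geqslant 4$ being needed so that the four useful leaves fit). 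The main obstacle is thus not any single estimate but the bookkeeping: confirming that the step-$2$ tiling covers all $2n-1$ neighbours across both parities of $n$ and the boundary indices $1$ and $n$, while checking that padding the stars out to $m$ leaves never reintroduces the vertex $u$ we are trying to isolate.
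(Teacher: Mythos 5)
Your proposal is correct and takes essentially the same route as the paper: both isolate $u=00\ldots 0$ by covering $N_{AQ_n}(u)$ with $\lceil\frac{n-1}{2}\rceil$ stars centred at the even-indexed complement neighbours $\overline{u}^{2k}$, using the common-neighbour structure of Lemma \ref{UUCI}. The differences are only bookkeeping: the paper names every padding leaf explicitly (hence its case split on the parity of $m$ and the ranges of $k$) and, for even $n$, centres its last star at $\overline{u}^{n}$ rather than at your $u^{n,n-1}$, whereas your abstract degree count $m-4\leqslant 2n-6$ (equivalently $m\leqslant 2n-2$) dispatches the padding more cleanly and reaches the same bound.
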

\begin{proof}Let $u=00\ldots 0$ be a vertex of $AQ_n$. Then $N_{AQ_n}(u)=\{u^i|1\leqslant i\leqslant n\}\cup\{ \overline{u}^i|2\leqslant i\leqslant n\}$.  We need to show a $K_{1,m}$-structure cut with size $\lceil\frac{n-1}{2}\rceil$. According to the parity of $m$, we have the following two cases:

\textbf{Case 1.} $m$ is odd.

We set $S_1$ and $S_{\lceil\frac{n-1}{2}\rceil}$ as follows.
\begin{equation*}
S_1=\{u^1, u^2, \overline{u}^2, u^3, \overline{u}^3, (\overline{u}^2)^n\}\cup \{(\overline{u}^2)^i, \overline{(\overline{u}^2)}^i|4\leqslant i\leqslant \frac{m+1}{2}\},
\end{equation*}
and for $n$ is odd,
\begin{equation*}
S_{\lceil\frac{n-1}{2}\rceil}
=\{u^{n-1}, \overline{u}^{n-1}, u^{n}, \overline{u}^{n}, (\overline{u}^{n-1})^1, (\overline{u}^{n-1})^{n-1}\}\cup \{(\overline{u}^{n-1})^i, \overline{(\overline{u}^{n-1})}^{i}|2\leqslant i\leqslant \frac{m-3}{2}\};
\end{equation*}
for $n$ is even,
\begin{equation*}
S_{\lceil\frac{n-1}{2}\rceil}
=\{u^{n}, \overline{u}^{n}, (\overline{u}^{n})^1, (\overline{u}^{n})^{n}, (\overline{u}^{n})^{n-2}, (\overline{u}^{n})^{n-1} \}\cup \{(\overline{u}^{n})^i, \overline{(\overline{u}^{n})}^{i}|2\leqslant i\leqslant \frac{m-3}{2} \}.
\end{equation*}
Now we consider $S_k$ for $2\leqslant k\leqslant \lceil\frac{n-1}{2}\rceil-1$. If $m\leqslant 2n-4k+3$, then
\begin{equation*}
S_k=\{u^{2k}, \overline{u}^{2k}, u^{2k+1}, \overline{u}^{2k+1}, (\overline{u}^{2k})^1, (\overline{u}^{2k})^{2k}\}\cup\{(\overline{u}^{2k})^i, \overline{(\overline{u}^{2k})}^{i}|2k+2\leqslant i\leqslant 2k+2+\frac{m-7}{2}\};
\end{equation*}
and if $m\geqslant 2n-4k+5$, then
$$
\aligned
S_k=&\{u^{2k}, \overline{u}^{2k}, u^{2k+1}, \overline{u}^{2k+1}, (\overline{u}^{2k})^1, (\overline{u}^{2k})^{2k}\}\cup\{(\overline{u}^{2k})^i, \overline{(\overline{u}^{2k})}^{i}|2k+2\leqslant i\leqslant n\}\\&\cup \{(\overline{u}^{2k})^j, \overline{(\overline{u}^{2k})}^{j}|2\leqslant j\leqslant 2k-n+\frac{m-1}{2}\}.
\endaligned
$$

\textbf{Case 2.} $m$ is even.

Let
\begin{equation*}
S_1=\{u^1, u^2, \overline{u}^2, u^3, \overline{u}^3\}\cup \{(\overline{u}^2)^i, \overline{(\overline{u}^2)}^i|4\leqslant i\leqslant \frac{m+2}{2}\}.
\end{equation*}
We set $S_{\lceil\frac{n-1}{2}\rceil}$ as follows. For $n$ is odd, $S_{\lceil\frac{n-1}{2}\rceil}=\{u^{n-1}, \overline{u}^{n-1}, u^{n}, \overline{u}^{n}, (\overline{u}^{n-1})^1\}$ with $m=4$;
\begin{equation*}
S_{\lceil\frac{n-1}{2}\rceil}=\{u^{n-1}, \overline{u}^{n-1}, u^{n}, \overline{u}^{n}, (\overline{u}^{n-1})^1, (\overline{u}^{n-1})^{n-2}, (\overline{u}^{n-1})^{n-1}\}\cup \{\overline{u}^{n-1})^j, \overline{(\overline{u}^{n-1})}^{j}|2\leqslant j\leqslant \frac{m}{2}-2  \}
\end{equation*}
with $m\geqslant 6$. For $n$ is even, we set
\begin{equation*}
S_{\lceil\frac{n-1}{2}\rceil}=\{u^{n}, \overline{u}^{n}, (\overline{u}^{n})^1, (\overline{u}^{n})^{n}, (\overline{u}^{n})^{n-1} \}\cup \{(\overline{u}^{n})^i, \overline{(\overline{u}^{n})}^{i}|2\leqslant i\leqslant \frac{m-2}{2} \}.
\end{equation*}
Next we consider the case $S_k$ for $2\leqslant k\leqslant \lceil\frac{n-1}{2}\rceil-1$. If $m\leqslant 2n-4k+2$, then
\begin{equation*}
S_k=\{u^{2k}, \overline{u}^{2k}, u^{2k+1}, \overline{u}^{2k+1}, (\overline{u}^{2k})^1\}\cup\{(\overline{u}^{2k})^i, \overline{(\overline{u}^{2k})}^{i}|2k+2\leqslant i\leqslant 2k+2+\frac{m-6}{2}\};
\end{equation*}
and if $m\geqslant 2n-4k+4$, then
$$
\aligned
S_k=&\{u^{2k}, \overline{u}^{2k}, u^{2k+1}, \overline{u}^{2k+1}, (\overline{u}^{2k})^1, (\overline{u}^{2k})^{2k-1}, (\overline{u}^{2k})^{2k}\}\cup\{(\overline{u}^{2k})^i, \overline{(\overline{u}^{2k})}^{i}|2k+2\leqslant i\leqslant n\}\\&\cup \{(\overline{u}^{2k})^j, \overline{(\overline{u}^{2k})}^{j}|2\leqslant j\leqslant 2k-n-1+\frac{m}{2}\}.
\endaligned
$$

We can find $S_{k}$ induces a star $K_{1,m}$ with the center $\overline{u}^{2k}$ for $1\leqslant k\leqslant \lceil\frac{n-1}{2}\rceil$.
Let $S=\cup_{k=1}^{\lceil\frac{n-1}{2}\rceil}S_{k}$. Then $N_{AQ_{n}}(u)\subseteq V(S)$. It is easy to find that $u$ is an isolated vertex of $AQ_{n}-V(S)$. So the set $S$ forms a $K_{1,m}$-structure cut of $AQ_{n}$ with $|S|=\lceil\frac{n-1}{2}\rceil$.
\end{proof}
\begin{figure}[!htbp]
\begin{center}
\includegraphics[totalheight=9cm]{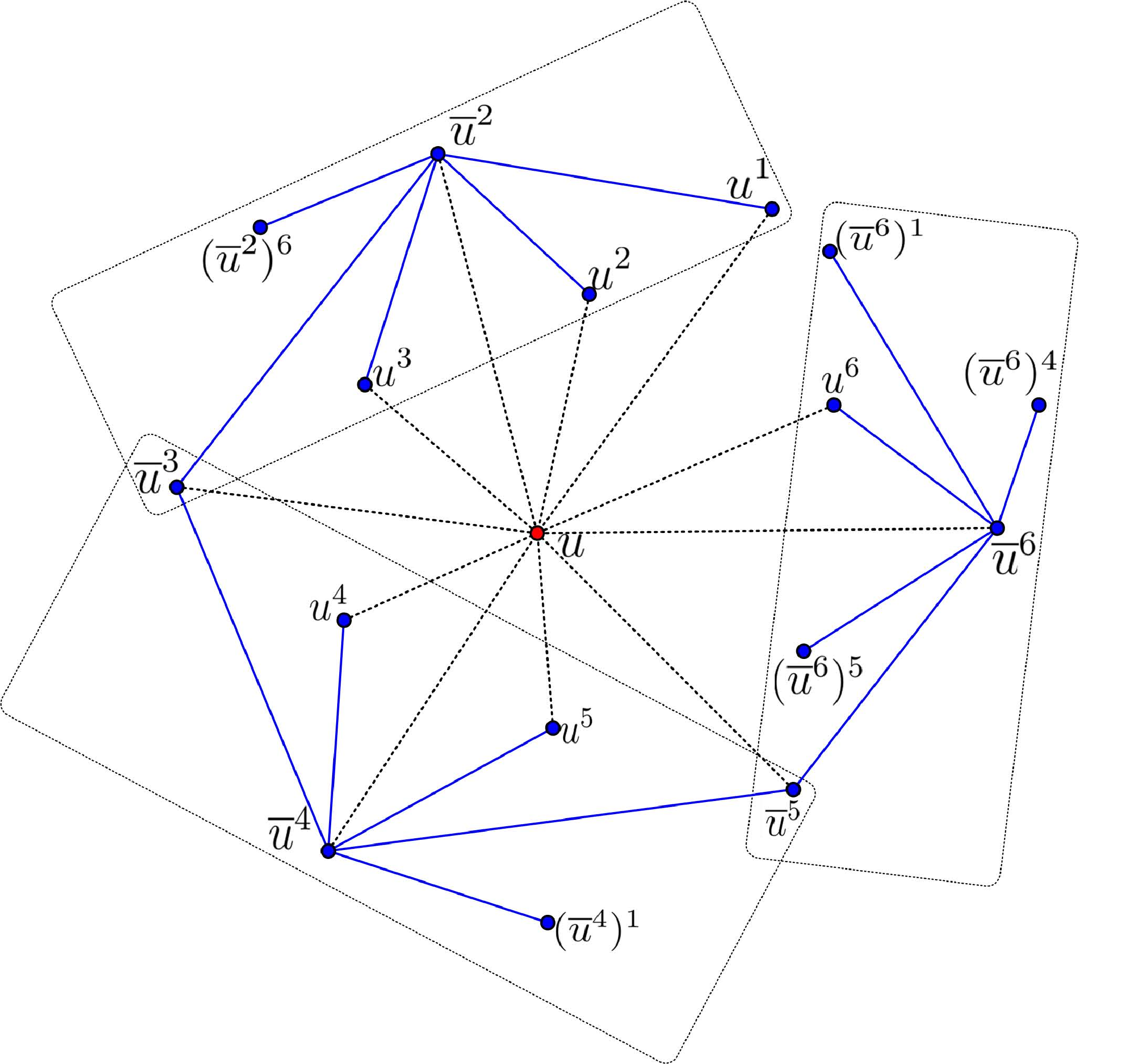}
\caption{\label{}\small{A $K_{1,5}$-structure cuts of $AQ_6$}}
\end{center}
\end{figure}
By the Lemmas \ref{AL}, \ref{AU}, we obtain the following theorem.
\begin{The}For integers $m,n$ with $4 \leqslant m\leqslant \frac{3n-15}{4}$, $\kappa ^s(AQ_n; K_{1,m})=\kappa(AQ_n; K_{1,m})=\lceil\frac{n-1}{2}\rceil$.
\end{The}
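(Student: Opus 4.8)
The plan is to obtain the theorem by sandwiching both $\kappa^s(AQ_n; K_{1,m})$ and $\kappa(AQ_n; K_{1,m})$ between matching lower and upper bounds, each of which has already been isolated as a lemma. First I would record the definitional inequality $\kappa^{s}(AQ_n; K_{1,m}) \leqslant \kappa(AQ_n; K_{1,m})$, which holds because every $K_{1,m}$-structure cut is in particular a $K_{1,m}$-substructure cut (this is exactly the remark ``$\kappa^{s}(G,H)\leqslant \kappa(G,H)$'' noted in the introduction).

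Next I would invoke Lemma \ref{AU} to get $\kappa(AQ_n; K_{1,m}) \leqslant \lceil \frac{n-1}{2}\rceil$. Here I must check that the hypothesis range of the theorem, $4 \leqslant m \leqslant \frac{3n-15}{4}$, is contained in the range $4 \leqslant m \leqslant 2n-2$ required by Lemma \ref{AU}; since $\frac{3n-15}{4} \leqslant 2n-2$ for every positive $n$, this inclusion is immediate. Then I would invoke Lemma \ref{AL} for the reverse inequality $\kappa^{s}(AQ_n; K_{1,m}) \geqslant \lceil \frac{n-1}{2}\rceil$, whose hypothesis is precisely the range $4 \leqslant m \leqslant \frac{3n-15}{4}$ stated in the theorem, so no further range-checking is needed on that side. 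Combining the three inequalities yields $\lceil\frac{n-1}{2}\rceil \leqslant \kappa^{s}(AQ_n; K_{1,m}) \leqslant \kappa(AQ_n; K_{1,m}) \leqslant \lceil\frac{n-1}{2}\rceil$, whence all three quantities equal $\lceil\frac{n-1}{2}\rceil$, as claimed.

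At the level of the theorem there is essentially no obstacle: the substantive work has already been discharged inside the two lemmas. The real content lives in the lower bound Lemma \ref{AL}, where the argument must rule out a small component $C$ of $AQ_n - V(\mathcal{F})$: for an edge $uv$ of $C$ one uses the forbidden-subgraph results (Lemmas \ref{T} and \ref{H}) to see that each star of the cut captures at most $7$ of the neighbors of $\{u,v\}$, forcing $|\mathcal{F}|$ to be large, and then the component-order estimate of Lemma \ref{Qn}\eqref{cond 1}, transferred to $AQ_n$ via Remark \ref{FA}, caps the total size of the small components at $\tfrac{2m}{3}$; the inequality $m \leqslant \frac{3n-15}{4}$ is exactly what is needed to make these two bounds contradict one another. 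The only point requiring attention when reading the theorem itself is that the two lemmas' hypotheses be simultaneously satisfiable, i.e.\ that the stated range is non-empty, which forces $n \geqslant 11$ and thereby pins down the regime in which the equality holds.
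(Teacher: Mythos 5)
Your proposal is correct and follows exactly the paper's route: the theorem is obtained by combining the lower bound of Lemma \ref{AL}, the upper bound of Lemma \ref{AU} (whose wider range $4\leqslant m\leqslant 2n-2$ indeed contains $4\leqslant m\leqslant \frac{3n-15}{4}$), and the definitional inequality $\kappa^{s}(AQ_n;K_{1,m})\leqslant\kappa(AQ_n;K_{1,m})$. Your additional observations — the sandwich chain and the non-emptiness check forcing $n\geqslant 11$ — are accurate and, if anything, more explicit than the paper's one-line deduction.
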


\section{Conclusion }
In this paper, we have showed that star-structure connectivity and star-substructure connectivity of $n$-dimensional folded hypercube and  $n$-dimensional augmented cube. The results are summarized as follows.
For $2\leqslant m\leqslant n-1$ and $n\geqslant 7$,
\begin{equation*}
\kappa(FQ_n; K_{1,m} )=\kappa^{s}(FQ_{n};K_{1,m})= \lceil\frac{n+1}{2}\rceil,
\end{equation*}
and for $4\leqslant m\leqslant \frac{3n-15}{4}$,
\begin{equation*}
\kappa ^s(AQ_n; K_{1,m})=\kappa(AQ_n; K_{1,m})=\lceil\frac{n-1}{2}\rceil.
\end{equation*}

It is easy to find that the range of leaves $m$ of star-structure is $1\leqslant m\leqslant n+1$ when considering $\kappa(FQ_n; K_{1,m})$. Sabir et al. \cite{9} established $\kappa^s(FQ_n; K_{1,1})=\kappa(FQ_n; K_{1,1})=n$ for $n\geqslant 7$. We obtain the $\kappa(FQ_n; K_{1,m})$ and $\kappa^s(FQ_n; K_{1,m})$ for $2\leqslant m\leqslant n-1$, $n\geqslant 7$. Then for $n\leqslant m\leqslant n+1$, there is no relevant results. Similarly, the range of leaves $m$ of star-structure is $1\leqslant m\leqslant 2n-1$ when considering $\kappa(AQ_n; K_{1,m})$.
Kan et al. \cite{23} showed  $\kappa^s(AQ_n; K_{1,m})=\kappa(AQ_n; K_{1,m})=\lceil\frac{2n-1}{1+m}\rceil$ for $1\leqslant m\leqslant 3$, and we obtain the $\kappa(AQ_n; K_{1,m})$ and $\kappa^s(AQ_n; K_{1,m})$ for $4\leqslant m\leqslant \frac{3n-15}{4}$. Then there is no relevant results for $\frac{3n-15}{4}<m\leqslant 2n-1$. Hence, we may focus on $\kappa(FQ_n; K_{1,m})$ and $\kappa^s(FQ_n; K_{1,m})$ for $n\leqslant m\leqslant n+1$; $\kappa(AQ_n; K_{1,m})$ and $\kappa^s(AQ_n; K_{1,m})$ for $\frac{3n-15}{4}<m\leqslant 2n-1$.
\section*{Acknowledgement}
This work is supported by NSFC (Grant No. 11871256).

\end{document}